\numberwithin{equation}{section}
\newtheorem{theorem}{Theorem}[section]
\newtheorem{lemma}[theorem]{Lemma}
\newtheorem{proposition}[theorem]{Proposition}
\newtheorem{corollary}[theorem]{Corollary}
\theoremstyle{definition}
\newtheorem{definition}[theorem]{Definition}
\theoremstyle{remark}
\newtheorem{remark}[theorem]{Remark}
\newtheorem{example}[theorem]{Example}
\newtheorem{question}[theorem]{Question}
\newtheorem{acknowledgement}{Acknowledgement}
\newcommand{\im}{\operatorname{im}}
\newcommand{\Ker}{\operatorname{ker}}
\newcommand{\coKer}{\operatorname{coker}}
\newcommand{\Egrade}{\operatorname{E.grade}}
\newcommand{\Kgrade}{\operatorname{K.grade}}
\newcommand{\Cgrade}{\operatorname{\check{C}.grade}}
\newcommand{\Spec}{\operatorname{Spec}}
\newcommand{\rad}{\operatorname{rad}}
\newcommand{\Ht}{\operatorname{ht}}
\newcommand{\pd}{\operatorname{pd}}
\newcommand{\gd}{\operatorname{gl.dim}}
\newcommand{\Wdim}{\operatorname{w.dim}}
\newcommand{\fd}{\operatorname{fl.dim}}
\newcommand{\gr}{\operatorname{gr}}
\newcommand{\Ext}{\operatorname{Ext}}
\newcommand{\Tor}{\operatorname{Tor}}
\newcommand{\Tot}{\operatorname{Tot}}
\newcommand{\Hom}{\operatorname{Hom}}
\newcommand{\Edepth}{\operatorname{E.depth}}
\newcommand{\Kdepth}{\operatorname{K.depth}}
\newcommand{\coker}{\operatorname{coker}}
\newcommand{\fm}{\frak{m}}
\newcommand{\fp}{\frak{p}}
\newcommand{\fa}{\frak{a}}
\newcommand{\fb}{\frak{b}}
\newcommand{\fn}{\frak{n}}
\begin{document}
\title[Almost Cohen-Macaulay and almost regular algebras via almost flat extensions]
{Almost Cohen-Macaulay and almost regular algebras via almost flat extensions }

\author[M. Asgharzadeh]{Mohsen Asgharzadeh}
\author[K. Shimomoto]{Kazuma Shimomoto}

\address{School of Mathematics, Institute for
Research in Fundamental Sciences (IPM), P.O. Box 19395-5746, Tehran,
Iran.}
\email{asgharzadeh@ipm.ir}

\address{Department of Mathematics, School of Science and Technology, Meiji University, 1-1-1 Higashimita, Tama-ku, Kawasaki 214-8571, Japan.}
\email{shimomotokazuma@gmail.com}

\thanks{2000 {\em Mathematics Subject Classification\/}: 13H10, 13D45}

\keywords{Almost zero module, big Cohen-Macaulay algebra, coherent ring, flat extension, local cohomology module, non-Noetherian ring, rings of finite global dimension.}

%\subjclass{13}
%\subjclass[2000]{Primary 13-XX}
%\subjclass[2000]{Primary ; Secondary}
%\date{\today \, (\printtime)}
%\date{\today}
\maketitle

\begin{abstract}
The present paper deals with various aspects of the notion of almost Cohen-Macaulay property, which was introduced and studied by Roberts, Singh and Srinivas.
For example, we prove that, if the local cohomology modules of an algebra $T$ of certain type over a local Noetherian ring are almost zero, $T$ maps to a big Cohen-Macaulay algebra.
\end{abstract}

\bigskip

\section {Introduction}

Let $(R,\fm)$ be a $d$-dimensional Noetherian local ring with a system of parameters $\underline{x}:=x_1,\ldots,x_d$. Hochster's \textit{Monomial Conjecture} states that $x_1^t\cdots x_d^t\notin(x_1^{t+1},\ldots,x_d^{t+1})$ for all $t\geq 0$. The Monomial Conjecture is known to hold for all equi-characteristic local rings and for all local rings of dimension at most three. A recent proof of this conjecture in dimension three due to Heitmann has opened a new approach to the study of homological conjectures in mixed characteristic and this approach is a sample of \textit{Almost Ring Theory}. We recommend the reader to \cite{GR} for a systematic study of almost ring theory.

Let $R^{+}$ denote the integral closure of a domain $R$ in an algebraic closure of the fraction field of $R$. Using extraordinarily difficult methods, it was recently proved by Heitmann~\cite{He1}, that $R^{+}$ is almost Cohen-Macaulay for a complete local domain $R$ of mixed characteristic in dimension three. Let $T$ be an $R$-algebra equipped with a value map $v$ (this term together with its normalized version is explained below, but we warn the reader that the value map is defined on algebras which are not necessarily integral domains). We recall from \cite{R} that $T$ is \textit{almost Cohen-Macaulay}, if every fixed element of the local cohomology module $H^{i}_{\fm}(T)$ is annihilated by elements of arbitrarily small valuations with respect to $v$ for all $i \ne d$, and $T/(\underline{x})T$ is not almost zero for every system of parameters $\underline{x}$ of $R$ (see also \cite{R1}, \cite{RSS} and \cite{S1}). In the graded case of characteristic zero, some intricate calculations and examples may be found in ~\cite{RSS}. The organization of this paper is as follows:

In Section 2, we summarize some known results which will be used throughout this work. Also, we discuss basic properties of almost zero modules.  The central flow of Section 3 is closely connected with the following question:

\begin{question}
\label{Q1}
Under what conditions does an $R$-algebra $T$ map to a big Cohen-Macaulay $R$-algebra? Is it possible to provide such conditions in terms of the annihilator of local cohomology modules?
\end{question}

The answer to Question \ref{Q1} is stated as follows (see Theorem \ref{theorem20} and Theorem \ref{theorem2}):

\begin{theorem}
Let $(R,\fm)$ be a $d$-dimensional Noetherian local ring and let $B$ be an $R$-algebra equipped with a sequence $\{c_{n} \in B~|~n \in \mathbb{N}\}$ such that there exists a non-zero divisor $c\in B$ for which $c_n$ is a root of $z^n-c=0$ for all $n$ and $c_m^k=c_n$, whenever $kn=m$. Suppose that $c_{n} \cdot H^{i}_{\fm}(B)=0$ for all $n > 0$ and $i \ne d$. Then $B$ maps to a big Cohen-Macaulay $R$-algebra.
\end{theorem}

The importance of the above question arises from an attempt to finding a well-behaved closure operation of ideals in Noetherian rings. For an ideal $I$ of a Noetherian local ring $(R,\fm)$, we set
$$
I^{\mathrm{CM}}:=\{x \in R~|~x \in IB~\mbox{for some big Cohen-Macaulay}~R\mbox{-algebra}~B\}.
$$
An $R$-algebra $T$ is called a \textit{seed}, if it maps to a big Cohen-Macaulay $R$-algebra. Seed algebras over local rings have been studied in~\cite{Die} extensively, where it was shown that $I^{\mathrm{CM}}$ defines a closure operation satisfying  a series of certain axioms for complete local domains in positive characteristic. In fact, in order to verify that $I^{\mathrm{CM}}$ defines an ideal, we need the existence of weakly functorial big Cohen-Macaulay algebras. In a sense, if there are sufficiently many big Cohen-Macaulay algebras, then $I^{\mathrm{CM}}$ could be a candidate of closure operation which makes sense in all characteristics (see \cite{Die} for more details).

In Section 4, we prove some results inspired by Roberts' results on Fontaine rings \cite{R} and by new ideas of Faltings \cite[Section 2]{F}. In particular, the notion of \textit{almost faithful flatness} (see Definition \ref{5}) plays an important role in studying almost Cohen-Macaulay property in various aspects. Let $S$ be a reduced local ring that is module-finite over a regular local ring $R$ of characteristic $p>0$. Then in Theorem \ref{almflat}, we show that the minimal perfect $S$-algebra $S_{\infty}$ is an almost flat $R_{\infty}$-module. Let $T$ be an almost Cohen-Macaulay algebra over a local ring $(R,\fm)$ and let $M$ be an almost faithfully flat $T$-module. In Theorem \ref{almCM} below, we show that $H^{d-i}_{\frak m}(M) \approx H^{d-i}_{\frak m}(T) \otimes_T M$. In particular, $M$ is almost Cohen-Macaulay.

In Section 5, we first introduce the notion of almost regular algebras (see Definition~\ref{almostreg} below). Then we relate almost regularity to the structure of $F$-coherent rings which is defined in \cite{S}. As a main consequence, we prove a structure theorem on almost regular algebras (see Theorem~\ref{exam1} and Corollary~\ref{cor1}). Namely, if $R$ is a complete local domain of characteristic $p>0$, then $\Ext^n_{R_{\infty}}(M,N)$ is almost zero for all $R_{\infty}$-modules $M$ and $N$ and $n>\dim R+1$, where $R_{\infty}$ is the perfect closure of $R$.

\section{Preliminary notation}

The notation $(R,\fm)$ denotes a Noetherian local ring. In this section, we set notation and discuss some facts which will be used throughout the paper. Let $A$ be a general commutative ring (may not be Noetherian).
Let $M$ be an $A$-module and $\underline{x}=x_{1},\ldots,x_{t}$ a sequence in $A$. The notation $\underline{x}_i=x_1,\ldots,x_i$ will be also used for $1\leq i \leq t$. A sequence $\underline{x}$ is \textit{M-regular}, if $x_{i+1}$ is a non-zero divisor on $M/(\underline{x}_i)M$ for $i \ge 0$ and $M/(\underline{x})M \ne 0$. A  module $M$ over a local ring $(R,\fm)$ is \textit{strict}, if $M \ne \fm M$. A strict $R$-module $M$ is called a \textit{big Cohen-Macaulay module}, if there is a system of parameters $\underline{x}=x_1,\ldots, x_d$ of $R$ such that $\underline{x}$ is $M$-regular and $M$ is a \textit{balanced big Cohen-Macaulay module}, if every system of parameters of $R$ is $M$-regular. For an ideal $\fa=(x_1,\ldots,x_n)$ of $A$, the notation $\mathbb{K}_{\bullet}(\underline{x};M)$ stands for the Koszul complex of $M$ with respect to $\underline{x}$. $H^i(\underline{x};M)$ stands for the $i$-th cohomology module of $\Hom_A(\mathbb{K}_{\bullet}(\underline{x}),M)$. The \textit{Koszul grade of $\fa$} on $M$ is defined by
$$
\Kgrade_A(\fa,M):=\inf\{i \in\mathbb{N}\cup\{0\}~|~H^i(\underline{x};M) \neq0\}.
$$
In view of \cite[Lemma 3.2]{AT}, $\Kgrade_A (\fa,M)\leq \Ht_{M}(\fa)$. The \textit{extension grade} of an ideal $\fb$ of $A$ on $M$ is defined by
$$
\begin{array}{ll}
\Egrade_A(\fb,M):=\inf\{i\in \mathbb{N}\cup\{0\}~|~\Ext^{i}_{A}(A/\fb,M)\neq0\}.\end{array}
$$
According to \cite[Example 2.4 (i)]{AT}, there is a ring $A$ such that $\Egrade_A (\fb,A)> \Ht(\fb)$. However, $\Kgrade_A(\fb,M)=\Egrade_{A}(\fb,M)$, if $\fb$ is  finitely generated \cite[Proposition 2.3]{AT}. The notation $H_{\fa}^{i}(M)$ stands for the $i$-th cohomology of the \v{C}ech complex of $M$ with respect to $\underline{x}$. The \textit{\v{C}ech grade of $\fa$} on $M$ is defined by
$$
\Cgrade_{A}(\fa,M):=\Cgrade_{A}(\underline{x};M):=\inf\{i\in \mathbb{N}\cup\{0\}~|~H_{\fa}^{i}(M)\neq0\}.
$$
If $\fa \subseteq A$ is not finitely generated, then the Koszul grade of $\fa$ on $M$ is defined by
$$
\Kgrade_A(\fa,M):=\sup\{\Kgrade_A(\fb,M)~|~\mbox{$\fb$ is a finitely generated subideal of}~\fa\}.
$$
The \v{C}ech grade for general ideals may be defined in a similar way. For a quasilocal ring $(A,\fm)$, we use the notation $\Kdepth_A(M)$ for $\Kgrade_A(\fm,M)$.

\begin{remark}\label{non}
As seen above, there are several definitions of ``grade'' over non-Noetherian rings. We recall from \cite[Proposition 2.3]{AT} that $\Kgrade_A(\fa,M)=\Cgrade_{A}(\fa,M)$.
\end{remark}

We define a map of $A$-modules $\Phi:M/(\underline{x})M[X_{1},\ldots,X_{t}] \to \gr_{\underline{x}} M$ by the rule $\Phi(X_{i})=x_{i}\in(\underline{x})/(\underline{x})^{2}$. Then $\underline{x}$ is called \textit{M-quasi-regular}, if $\Phi$ is an isomorphism.

\begin{lemma}
\label{proposition1}
Let $(R,\fm)$ be a local ring of dimension $d$ and let $N$ be an $R$-module. For a system of parameters $\underline{x}=x_{1},\ldots,x_{d}$, consider the following assertions:

\begin{enumerate}
\item[$\mathrm{(i)}$]
$H^{i}_{\fm}(N)=0$ for all $i \ne d$ and $N/\fm N \ne 0$.
\item[$\mathrm{(ii)}$]
$H_{i}(\underline{x};N)=0$ for all $i \ne 0$ and $N/\fm N \ne 0$.
\item[$\mathrm{(iii)}$]
$\underline{x}$ is $N$-quasi-regular.
\item[$\mathrm{(iv)}$]
$\underline{x}$ is a regular sequence on $\widehat{N}$, where $\widehat{N}$ is the $\fm$-adic completion of $N$.
\end{enumerate}
Then $(i)$ is equivalent to $(ii)$, $(iii)$ is equivalent to $(iv)$, and $(i)$ implies $(iii)$.
\end{lemma}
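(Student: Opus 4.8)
The plan is to establish the three equivalences/implications separately, reducing each to classical statements found in Matsumura or Bruns--Herzog, adapted to the possibly non-Noetherian module $N$ (note $R$ is Noetherian, but $N$ is an arbitrary $R$-module, so one must be careful).

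\medskip

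\textbf{Step 1: $(i)\Leftrightarrow(ii)$.} First I would recall that, since $\underline{x}=x_1,\dots,x_d$ is a system of parameters of $R$, we have $\rad(\underline{x})R=\fm$, hence $\V(\underline{x})=\{\fm\}$ and the \v{C}ech complex on $\underline{x}$ computes $H^\bullet_\fm(-)$. Thus for any $R$-module $N$, $H^i_\fm(N)\cong H^i_{(\underline{x})}(N)$, the $i$-th cohomology of the \v{C}ech complex $\check{C}^\bullet(\underline{x};N)$. Now the \v{C}ech complex is the direct limit of (cochain) Koszul complexes on the powers $x_1^n,\dots,x_d^n$, and dually the Koszul homology $H_i(\underline{x};N)$ is linked to $H^{d-i}(\underline{x};N)$ by the self-duality of the Koszul complex $\mathbb{K}_\bullet(\underline{x})$ (a bounded complex of finite free modules). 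The cleanest route: $H^i_\fm(N)=0$ for $i>t$ beyond the top is automatic, and the vanishing $H^i_\fm(N)=0$ for all $i\neq d$ is equivalent — via the standard argument comparing Koszul and \v{C}ech cohomology, using that passing to powers $x_j^n$ does not change the Koszul \emph{grade} or the top nonvanishing — to $H^i(\underline{x};N)=0$ for $i\neq d$, which by Koszul self-duality is $H_i(\underline{x};N)=0$ for $i\neq 0$. The condition $N/\fm N\neq 0$ appears verbatim in both (i) and (ii), and one checks $N/\fm N\neq 0 \iff N/(\underline{x})N\neq 0$ using $\rad(\underline{x})=\fm$ together with $(\underline{x})N\subseteq \fm N$ and a Nakayama-type observation at the level of $H^d$ / $H_0$. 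I expect the subtlety here to be the non-Noetherian $N$: the comparison of Koszul and local cohomology I want to cite is exactly Remark~\ref{non} / \cite[Proposition 2.3]{AT} ($\Kgrade=\Cgrade$), but here I need the stronger statement about simultaneous vanishing of all intermediate cohomologies, not just the grade; so the main work is to run the long exact sequence induction on $d$ carefully.

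\medskip

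\textbf{Step 2: $(iii)\Leftrightarrow(iv)$.} This is the classical fact that $\underline{x}$ is $N$-quasi-regular if and only if $\underline{x}$ is a regular sequence on the $(\underline{x})$-adic completion of $N$ (see Matsumura, Theorem 16.3, or Bruns--Herzog 1.1.7 and the surrounding discussion), combined with the observation that since $\rad(\underline{x})=\fm$, the $(\underline{x})$-adic and $\fm$-adic topologies on $N$ coincide, so $\widehat{N}$ (the $\fm$-adic completion) equals the $(\underline{x})$-adic completion. I would also note that the associated graded ring $\gr_{\underline{x}}N$ is unchanged under completion, so quasi-regularity is a property of $\widehat{N}$; then the equivalence with $\widehat{N}$-regularity of $\underline{x}$ is precisely the cited theorem. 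I expect this step to be essentially a citation with a one-line topological remark.

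\medskip

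\textbf{Step 3: $(i)\Rightarrow(iii)$.} Given (i), hence (ii) by Step 1, I want $\underline{x}$ to be $N$-quasi-regular. The standard approach is induction on $d$: from $H_i(\underline{x};N)=0$ for $i>0$ one extracts, via the exact sequences relating $\mathbb{K}_\bullet(x_1,\dots,x_{d-1};N)$ and $\mathbb{K}_\bullet(\underline{x};N)$, that $x_d$ is a nonzerodivisor on a suitable quotient and that the lower Koszul homology also vanishes — but the obstruction is that $x_d$ need not be a nonzerodivisor on $N$ itself when $N$ is not finitely generated, which is exactly why the statement is phrased as quasi-regularity (a weaker, completion-insensitive notion) rather than genuine regularity. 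The right tool is the characterization of quasi-regular sequences via Koszul homology: $\underline{x}$ is $N$-quasi-regular iff $H_i(\underline{x};N)=0$ for $i>0$ \emph{and} a mild finiteness/separatedness hypothesis holds, OR more robustly, one uses that (i) $\Rightarrow$ (iv) for $\widehat N$ directly (depth argument on the complete module, where $\widehat N$ is a module over the Noetherian complete local ring $\widehat R$ and $H^i_\fm(\widehat N)=H^i_\fm(N)\otimes$ completion behaves well), and then (iv) $\Rightarrow$ (iii) by Step 2. So I would actually prove $(i)\Rightarrow(iv)$: pass to $\widehat R$, note $H^i_{\widehat{\fm}}(\widehat N)=0$ for $i\neq d$, and invoke that for a module over a Noetherian local ring with a parameter system, vanishing of all local cohomology below the top forces $\depth$ to be full on the completion — i.e. $\underline{x}$ is $\widehat N$-regular.

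\medskip

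The main obstacle, I expect, is Step 1 (and the non-Noetherian bookkeeping it forces throughout): making the Koszul-to-local-cohomology comparison yield vanishing of \emph{every} intermediate cohomology, not merely of the grade, while $N$ is an arbitrary module. Everything else is either a citation (Step 2, and the hard classical input in Step 3) or a short topological/Nakayama remark.
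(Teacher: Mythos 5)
Your overall skeleton matches the paper's, which disposes of all three claims by citation: (i)$\Leftrightarrow$(ii) from $\Kgrade=\Cgrade$ (Remark~\ref{non}), (iii)$\Leftrightarrow$(iv) from \cite[Theorem 8.5.1]{BH}, and (i)$\Rightarrow$(iii) from \cite[Exercise 8.1.7]{BH}. Two comments. First, in Step 1 you worry that the grade equality is not enough and that you must control ``every intermediate cohomology'' by a long exact sequence induction. This worry is unfounded: both the \v{C}ech complex and the Koszul complex on the $d$ elements $\underline{x}$ are concentrated in (co)homological degrees $0,\dots,d$, so ``$H^{i}_{\fm}(N)=0$ for all $i\neq d$'' is literally the statement $\Cgrade_R(\fm,N)\geq d$, and ``$H_{i}(\underline{x};N)=0$ for all $i\neq 0$'' is, by Koszul self-duality, the statement $\Kgrade_R(\underline{x},N)\geq d$. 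The equality of the two grades therefore yields the equivalence with no further work; no induction on $d$ is needed.

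Second, and more seriously, the route you actually commit to for Step 3 --- proving (i)$\Rightarrow$(iv) by passing to $\widehat{N}$ --- has a gap. For an arbitrary (non-finitely-generated) $R$-module $N$, local cohomology does not commute with $\fm$-adic completion: $\widehat{N}$ need not be $N\otimes_R\widehat{R}$, and the \v{C}ech complex of $\widehat{N}$ is not the completion of that of $N$, so you cannot simply ``note $H^{i}_{\fm}(\widehat{N})=0$ for $i\neq d$.'' Moreover, even granting that vanishing, the assertion that full depth on the complete module forces the given system of parameters to be $\widehat{N}$-regular is not a soft depth argument for non-finitely-generated modules; it is exactly the chain (Koszul vanishing $\Rightarrow$ quasi-regular $\Rightarrow$ regular on a complete separated module), i.e., it already consumes Exercise 8.1.7 together with Theorem 8.5.1, so nothing is gained and the completion comparison remains unjustified. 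The correct and direct route, which you mention but then set aside, is the one the paper takes: (ii) gives $H_{i}(\underline{x};N)=0$ for $i>0$, and this implies $N$-quasi-regularity of $\underline{x}$ by \cite[Exercise 8.1.7]{BH}, with no passage to the completion.
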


\begin{proof}
(i) $\Leftrightarrow$ (ii): This follows from Remark~\ref{non}.

(iii) $\Leftrightarrow$ (iv): This is \cite[Theorem 8.5.1]{BH}.

(i) $\Rightarrow$ (iii): This is \cite[Exercise 8.1.7]{BH}.
\end{proof}

\begin{definition}\label{valdef}
Let $T$ be a commutative algebra (not necessarily an integral domain).

\begin{enumerate}
\item[$\mathrm{(i)}$]
We say that $T$ is equipped with a \textit{value map}, if there is a map $v:T\longrightarrow\mathbb{R} \cup \{\infty\}$
satisfying the following conditions:\\
1) $v(ab)=v(a)+v(b)$ for all $a,b \in T$;\\
2) $v(a+b) \ge \min\{v(a),v(b)\}$ for all $a,b \in T$;\\
3) $v(a)=\infty$ if $a=0$.

\item[$\mathrm{(ii)}$]
Let the notation be as in $\mathrm{(i)}$. If $v(c) \ge 0$ for every $c \in T$ and $v(c) > 0$ for every non-unit $c \in T$, then we say that $v$ is \textit{normalized}.
\end{enumerate}
\end{definition}

\begin{remark}
$\mathrm{(i)}$
In the above definition, it is not assumed that $v(a)=\infty$ if and only if $a=0$, but we use the convention that $\infty=0 \cdot \infty$. Note that $v(c)=\infty$ for every nilpotent element $c \in T$. It might be however better to call it a \textit{semi-value map}, our notation is in effect only for the present article, so it will not cause any confusion.

$\mathrm{(ii)}$
An example of a normalized value map appears in the following way. Let $(R,\fm)$ be a complete local domain and let $v$ be a valuation on $R$ with center $\fm$. Then $v$ is positive on $\fm$ and $v$ extends to any integral extension domain $R \to T$ and $T$ is quasilocal and the extended valuation is positive on the unique maximal ideal of $T$.
\end{remark}

\begin{proposition}[separability lemma]
\label{separable}
Let $T$ be a strict algebra over a local ring $(R,\fm)$ equipped with a normalized value map with the property that $v(a)=\infty \iff a=0$, and let $\fa:=\{x\in T~|~v(x)>\epsilon\}$ for some fixed $\epsilon>0$. Then $\fa$ is an ideal of $T$ and $\bigcap_{n=1}^{\infty} \fa^n=0$.
\end{proposition}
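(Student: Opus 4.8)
The plan is to verify the two assertions in order. First I would check that $\fa$ is an ideal of $T$. That $\fa$ is closed under addition is immediate from axiom (ii): if $v(x)>\epsilon$ and $v(y)>\epsilon$ then $v(x+y)\ge\min\{v(x),v(y)\}>\epsilon$. For absorption, take $x\in\fa$ and $t\in T$; by axiom (i), $v(tx)=v(t)+v(x)$. Since $v$ is normalized, $v(t)\ge 0$, so $v(tx)\ge v(x)>\epsilon$, giving $tx\in\fa$. Thus $\fa$ is an ideal. (Here the hypothesis $v(a)=\infty\iff a=0$ is not even needed; it is really needed only for the separatedness statement, to rule out nonzero elements of infinite value sitting in every power of $\fa$.)

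Next I would prove $\bigcap_{n=1}^\infty\fa^n=0$. The key observation is a lower bound on valuations of elements of $\fa^n$: any element of $\fa^n$ is a finite sum of products $a_1\cdots a_n$ with each $a_j\in\fa$, and by axioms (i) and (ii) such a sum has value $\ge\min_j(v(a_{1,j})+\cdots+v(a_{n,j})) > n\epsilon$, since each factor has value $>\epsilon$. Hence every $x\in\fa^n$ satisfies $v(x)>n\epsilon$. If $x\in\bigcap_{n=1}^\infty\fa^n$, then $v(x)>n\epsilon$ for all $n$, so $v(x)=\infty$, and the hypothesis $v(a)=\infty\iff a=0$ forces $x=0$. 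This completes the argument.

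I do not anticipate a genuine obstacle here; the statement is elementary once one unwinds the axioms for a normalized value map. The only point requiring mild care is the bookkeeping for a general element of $\fa^n$ (a sum, not a single product), but the superadditivity of $v$ under the operations handles this cleanly: one should state explicitly that for a finite sum $\sum_k a_{1,k}\cdots a_{n,k}$ with all $a_{j,k}\in\fa$ one has $v\!\left(\sum_k a_{1,k}\cdots a_{n,k}\right)\ge\min_k\sum_{j=1}^n v(a_{j,k})>n\epsilon$. It is worth remarking that strictness of $T$ and the local structure of $R$ play no role in the proof as stated; they are presumably recorded in the hypotheses because this separability lemma will be applied in that setting later in the paper.
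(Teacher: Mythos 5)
Your proof is correct and follows essentially the same route as the paper's: normalization gives the ideal property, and the estimate $v(x)>n\epsilon$ for $x\in\fa^n$ (via subadditivity of $v$ over sums and additivity over products, with coefficients bounded below by $0$) forces $v(x)=\infty$ and hence $x=0$ on the intersection. Your side remarks on which hypotheses are actually used are accurate and harmless.
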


\begin{proof}
As the valuation is normalized, $\fa$ is an ideal of $T$. Write $a=\sum t_{i_1\cdots i_j}x_1^{i_1} \cdots x_j^{i_j}$ for $a\in\bigcap_{i=1}^{\infty} \fa^n$, where the sum runs over $t_{i_1\cdots i_j}\in T$ and $x_i\in \fa$, $n=i_1+\cdots+i_j$, and we have $v(x_i)>\epsilon$ by assumption. Keeping in mind that $v(t_{i_1\cdots i_j})\geq0$, we have:
$$
v(a)\geq\min\{v(t_{i_1\cdots i_j}x_1^{i_1} \cdots x_j^{i_j})\}=\min\{v(t_{i_1\cdots i_j})+v(x_1^{i_1} \cdots x_j^{i_j})\}\geq\min\{v(x_1^{i_1} \cdots x_j^{i_j})\}\geq n\epsilon
$$
for all $n>0$. Consequently, $v(a)=\infty$ and thus $a=0$.
\end{proof}

\begin{definition}
Let $M$ be a module over an algebra $T$ which is equipped with a value map. Then we say that $M$ is \textit{almost zero with respect to v}, if $m \in M$ and $\epsilon>0$ are given, then there exists $b \in T$ such that $b \cdot m=0$ and $v(b)<\epsilon$.
\end{definition}

We note the following fact (an easy exercise). Let $0 \to L \to M \to N \to 0$ be a short exact sequence of $T$-modules. Then $M$ is almost zero if and only if both $L$ and $N$ are so. Also the class of almost zero modules is closed under taking direct limit. The source of the study of almost zero modules is~\cite{GR}, where the theory is developed in a manner different from ours. We will indicate the place where it is necessary to assume that the valuation is normalized. As a caution, it is always assumed that the algebra in issue comes with a value map, when we deal with almost zero modules.

\begin{example}
\label{coh}
$\mathrm{(i)}$:
Let $T$ be an algebra equipped with a normalized value map $v$.

\begin{enumerate}
\item[$\mathrm{(a)}$]
Let $\fn \subseteq T$ be a finitely generated proper ideal. Then we claim that $T/ \fn$ is not almost zero. Indeed, suppose that $T/ \fn$ is almost zero for a contradiction. Then, in particular, $1 \in T/\fn$ is almost zero. For any given $\epsilon>0$, there exists $b \in T$ such that $b \in \fn$ and $v(b)<\epsilon$. This implies that $\inf_{b \in \fn}v(b)=0$. Let $\fn=(a_1,\ldots,a_n)$, $a\in \fn$, and write $a=\Sigma_{i=1}^n t_i a_i$ for some $t_i\in T$. Then we find that for some $\delta > 0$,
\[\begin{array}{ll}
v(a)&\ge\min\{v(t_ia_i):1\leq i \leq n\}\\&=\min\{v(t_i)+v(a_i):1\leq i \leq n\}\\&\ge \min\{v(a_i):1\leq i \leq n\}\\
&>\delta,
\end{array}\]
which shows that the valuation $v(a)$ is bounded from below by some positive constant. This is a contradiction, since $T/\fn$ is assumed to be almost zero.

\item[$\mathrm{(b)}$]
Let $M$ be a coherent almost zero $T$-module. Then we claim that $M=0$. For a contradiction, let $x \in M$ be a non-zero element. Since $M$ is weakly coherent, $Tx=T/(0:_Tx)$ is finitely presented. Thus, we have $\inf_{c \in (0:_Tx)} v(c)>0$ by Part $\mathrm{(a)}$. Therefore, the cyclic module $Tx$ is not almost zero, a contradiction.
\end{enumerate}

$\mathrm{(ii)}$: Let $T$ be a perfect domain of characteristic $p>0$, equipped with a normalized value map $v$ and let $\fn \subseteq T$ be a proper and non-zero radical ideal. Then $T/\fn$ is almost zero as a $T$-module. Indeed, fix $t \in T/\fn$ and $a\in\fn$ such that $v(a)>0$. For any given $\epsilon>0$ and sufficiently large $n>0$, we have $v(a)<p^n\epsilon$. Keeping in mind that $\fn$ is radical and $T$ is closed under taking $p$-power roots of elements, we have $a^{p^{-n}}\cdot t=0$ and $v(a^{p^{-n}})<\epsilon$. This yields the claim.

$\mathrm{(iii)}$: It is necessary to assume that the valuation $v$ is normalized in Part $\mathrm{(i)}$. To see an example, take $T:=k[X_0,X_1,X_2,\ldots]=\bigcup_{i=0}^{\infty}k[X_0,\ldots,X_i]$, a polynomial algebra in countably many variables over a field $k$. Define a (non-normalized) valuation $v$ on $T$ in the following way. Set $v(X_0):=0$ and $v(X_t):=t^{-1}$ for $t>0$. For a polynomial $f \in A$, let $v(f)$ be such that $v(f)$ equals the minimum of all $v(\underline{X}^{\mu})$ as $\underline{X}^{\mu}$ varies over all the monomials appearing in $f$ with non-zero coefficients. Then we have $\inf_{v \in \fn} v(b)=0$ for $\fn:=(X_0)T$. Hence $T/\fn$ is coherent and almost zero.
\end{example}

\begin{remark}
Let $T$ be a coherent perfect domain of characteristic $p>0$ equipped with a normalized value map and let $\fn \subseteq T$ be a proper and non-zero radical ideal. Such a ring exists. $T/\fn$ is almost zero in light of Example~\ref{coh} (ii). In view of Example~\ref{coh} (i), any finitely presented submodule of $T/\fn$ is trivial. Hence $T/\fn$ cannot be presented as the direct limit of its finitely presented submodules.
\end{remark}

\section{Almost Cohen-Macaulay modules}

Throughout this section, $(R,\fm)$ is a Noetherian local ring with $d=\dim R$ and $T$ is a strict algebra over $R$ together with a value map $v$.

\begin{definition}\label{cohen}
Let $T$ be an algebra equipped with a value map. Then we say that a $T$-module $M$ is \textit{almost Cohen-Macaulay} over $R$, if $H^{i}_{\fm}(M)$ is almost zero for all $i \ne d$, but $M/\fm M$ is not almost zero.
\end{definition}

\begin{definition}\label{iso}
Let $T$ be an algebra equipped with a value map $v$. Then we say that $T$-modules $M$ and $N$ are

\begin{enumerate}
\item[$\mathrm{(i)}$]
\textit{almost isomorphic}, if there is a $T$-homomorphism $f:M\longrightarrow N$ (or $g:N\longrightarrow M$) such that  both of $\Ker f$ and $\coker f$ (or both of $\Ker g$ and $\coker g$) are almost zero.

\item[$\mathrm{(ii)}$]
\textit{in the same class}, if there is a $T$-module $L$ such that there exist $T$-homomorphisms $L \to M$ and $L \to N$, both of which are almost isomorphic in the above sense, which we denote by $M\approx N$.
\end{enumerate}
\end{definition}

\begin{remark}
$\mathrm{(i)}$:
In general, an almost isomorphism is not an equivalence relation. For example, let $R$ be a complete local domain of dimension $\ge 1$. Then it is known that $R^{+}$ is quasilocal with its unique maximal ideal $\fm^{+}$. It is easy to see that the natural inclusion $\fm^{+} \to R^{+}$ is an almost isomorphism. Conversely, let $R^{+} \to \fm^{+}$ be a map of $R^{+}$-modules. Then such a map is just a multiplication by some $a \in \fm^{+}$. If $\fm^{+}/aR^+$ is almost zero, then for any $\epsilon>0$, there is an element $b \in R^{+}$ such that $ba^{1/2} \in aR^+$ and $v(b)<\epsilon$. From this, we have
$$
\epsilon+\frac{1}{2}v(a)>v(ba^{1/2})=v(b)+\frac{1}{2}v(a) \ge v(a).
$$
But then, if we choose $\epsilon$ so that $\epsilon<\frac{1}{2}v(a)$, this yields a contradiction. However, we have the following result. Let both $f:M \to N$ and $g:N \to L$ be almost isomorphisms. Then $M$ and $L$ are almost isomorphic to each other. Indeed, by replacing $N$ with $\im f$ and $N$ with $N/\ker g$, we may assume that $f$ is surjective and $g$ is injective. Then the claim follows from the following exact sequence:
$$
\begin{CD}
0\longrightarrow \ker f \longrightarrow  \ker (g \circ f)\longrightarrow \ker g \longrightarrow
\coker f\longrightarrow \coKer(g \circ f)\longrightarrow \coKer g\longrightarrow 0
\end{CD}
$$
by the snake lemma.

$\mathrm{(ii)}$:
If $M \approx N$, it does not imply that there is a map $M \to N$ (or $N \to M$). In order to prove results on almost ring theory, it is convenient to have an actual map to take its kernel and cokernel, and so on. Also, if we say that a module $M$ is almost isomorphic to a flat module $N$, we have an almost isomorphism $M \to N$ (or $N \to M$). If a theorem is proved for $M \to N$, then we need to verify, if the theorem holds for $N \to M$ as well (for example, see Lemma \ref{toralm}), since an almost isomorphism cannot be inverted in a naive sense. An exact way of doing this business is to go through the localization of the abelian category of $R$-modules with respect to the class of almost isomorphisms (\cite{GR} for details). However, if the valuation is not normalized, there can happen some peculiar phenomena in general. In the final section, we will see an example of $A$-modules $K, L$ such that $L$ is almost zero, but $\Ext^i_A(K,L)$ is not.
\end{remark}

We cite the following easy lemmas without proofs.

\begin{lemma}
\label{spec}
Let $(E^r , d^r)$ be a first quadrant spectral sequence, which converges to a graded module $\{H(n)~|~n\in \mathbb{N}\}$. The following assertions hold:

\begin{enumerate}
\item[$\mathrm{(i)}$]
If $E^2_{p,q}$ is almost zero for all $q\neq q_0$, then $H(n)\approx E^2_{n-q_0,q_0}$.
\item[$\mathrm{(ii)}$]
If $E^2_{p,q}$ is almost zero for all $p\neq p_0$, then $H(n)\approx E^2_{p_0,n-p_0}$.
\end{enumerate}
\end{lemma}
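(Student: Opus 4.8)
The plan is to mimic the classical spectral-sequence collapse argument, tracking what survives when we only know that the $E^2$-page is \emph{almost} zero off a single row (resp.\ column), and to use the fact that the class of almost zero modules is a Serre subcategory closed under subobjects, quotients, extensions, and direct limits. For part (i): since the spectral sequence is first-quadrant, for each fixed $(p,q)$ the differentials $d^r$ into and out of $E^r_{p,q}$ eventually vanish, so $E^\infty_{p,q}$ is a subquotient of $E^2_{p,q}$. First I would observe that if $E^2_{p,q}$ is almost zero then so is every subquotient, hence $E^\infty_{p,q}$ is almost zero for all $q\neq q_0$. Thus in the finite filtration of $H(n)$ whose graded pieces are the $E^\infty_{p,n-p}$, every graded piece except $E^\infty_{n-q_0,q_0}$ is almost zero; since almost zero modules are closed under extensions, this already shows the natural map $E^\infty_{n-q_0,q_0}\to H(n)$ (or its inverse on the relevant subquotient) is an almost isomorphism, i.e.\ $H(n)\approx E^\infty_{n-q_0,q_0}$ in the sense of Definition~\ref{iso}.

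The remaining point is to pass from $E^\infty_{n-q_0,q_0}$ back to $E^2_{n-q_0,q_0}$. Here I would run induction on $r$, examining the two differentials at $E^r_{n-q_0,q_0}$: the incoming $d^r\colon E^r_{n-q_0+r,\,q_0-r+1}\to E^r_{n-q_0,q_0}$ has source a subquotient of $E^2_{n-q_0+r,\,q_0-r+1}$, which lies in row $q_0-r+1\neq q_0$ for $r\geq 2$ and is therefore almost zero; likewise the outgoing $d^r\colon E^r_{n-q_0,q_0}\to E^r_{n-q_0-r,\,q_0+r-1}$ has target a subquotient of something in row $q_0+r-1\neq q_0$, again almost zero. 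Consequently $E^{r+1}_{n-q_0,q_0}$ is obtained from $E^r_{n-q_0,q_0}$ by killing an almost zero submodule and then modding out by an almost zero image, so the canonical surjection/inclusion chain relating $E^r_{n-q_0,q_0}$ and $E^{r+1}_{n-q_0,q_0}$ is an almost isomorphism. Composing these finitely many almost isomorphisms (the spectral sequence stabilizes at this spot in finitely many steps, by the first-quadrant hypothesis) and using the transitivity statement for almost isomorphisms recorded in the Remark following Definition~\ref{iso}, we get $E^2_{n-q_0,q_0}\approx E^\infty_{n-q_0,q_0}$, and combining with the previous paragraph yields $H(n)\approx E^2_{n-q_0,q_0}$. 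Part (ii) is entirely symmetric: one replaces ``row $q_0$'' by ``column $p_0$'', notes that the differentials $d^r$ at $E^r_{p_0,n-p_0}$ have source in column $p_0+r$ and target in column $p_0-r$, both $\neq p_0$ for $r\geq 2$, so the same collapse argument applies, and the filtration of $H(n)$ again has all graded pieces almost zero except $E^\infty_{p_0,n-p_0}$.

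I expect the main obstacle to be purely bookkeeping rather than conceptual: one must be careful that ``$\approx$'' as defined in Definition~\ref{iso}(ii) is only asserted to be transitive through the intermediate-object device, and an almost isomorphism cannot be inverted naively (as the Remark emphasizes), so I would phrase each step as producing an actual $T$-linear map with almost zero kernel and cokernel, in a fixed direction, and only at the end package the composite as a single $\approx$. A secondary subtlety is making sure the filtration-quotient argument is stated for the correct finite filtration $0=F_{-1}H(n)\subseteq F_0H(n)\subseteq\cdots\subseteq F_nH(n)=H(n)$ with $F_pH(n)/F_{p-1}H(n)\cong E^\infty_{p,n-p}$, and then peeling off the almost zero pieces one at a time using closure under extensions; this is routine but should be written out so the reader sees that the single surviving term genuinely controls $H(n)$ up to almost isomorphism.
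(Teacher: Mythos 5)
Your proposal is correct and follows essentially the same route as the paper's proof: show $E^\infty_{p,q}\approx E^2_{p,q}$ by noting that for $r\ge 2$ the differentials at the distinguished row (resp.\ column) have almost zero source and target, then peel off the almost zero graded pieces of the finite filtration of $H(n)$ using closure of almost zero modules under subquotients and extensions. Your extra care about realizing each ``$\approx$'' through actual maps via the intermediate subobject $\psi_{n-q_0}H(n)$ is a welcome refinement of the paper's terser chain of $\approx$'s, but it is not a different argument.
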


\begin{lemma}
\label{toralm1}
Let $M$ be an almost zero $T$-module. Then $\Tor^T_i(M,N) \approx 0$ for all $i \ge 0$ and all $T$-modules $N$.
\end{lemma}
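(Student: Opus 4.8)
The plan is to compute $\Tor^T_i(M,N)$ from a free resolution $F_\bullet \to N$ of $N$, so that $\Tor^T_i(M,N)$ is a subquotient of $M \otimes_T F_i$. First I would observe that each $M \otimes_T F_i$ is almost zero: since $F_i$ is free, $M \otimes_T F_i$ is a direct sum of copies of $M$, and the class of almost zero modules is closed under arbitrary direct sums (being closed under direct limits and finite sums, as noted after the definition of almost zero). Then $\Tor^T_i(M,N) = H_i(M \otimes_T F_\bullet)$ is a subquotient of $M \otimes_T F_i$, hence is a submodule of a quotient of an almost zero module; using again that submodules and quotients of almost zero modules are almost zero (from the short exact sequence remark), we conclude $\Tor^T_i(M,N)$ is almost zero, and in particular $\Tor^T_i(M,N) \approx 0$.

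Alternatively, and perhaps more in keeping with the paper's style, I would argue by dimension shifting. Take a short exact sequence $0 \to K \to F \to N \to 0$ with $F$ free. Since $M$ is almost zero and $F$ is free, $M \otimes_T F$ is almost zero as above. The long exact sequence of $\Tor$ gives $\Tor^T_i(M,N) \cong \Tor^T_{i-1}(M,K)$ for $i \ge 2$, while for $i = 1$ one gets $\Tor^T_1(M,N) \hookrightarrow M \otimes_T K \hookrightarrow \dots$; handling the tail of the sequence $\Tor^T_1(M,F) \to \Tor^T_1(M,N) \to M\otimes_T K \to M \otimes_T F$, and noting $\Tor^T_1(M,F) = 0$, shows $\Tor^T_1(M,N)$ embeds into the almost zero module $M \otimes_T K$, hence is almost zero. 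For $i = 0$, $\Tor^T_0(M,N) = M \otimes_T N$ is a quotient of $M \otimes_T (\text{free cover of } N)$... but more simply $M \otimes_T N$ is a homomorphic image of a direct sum of copies of $M$ via any generating set of $N$, so it is almost zero directly. An easy induction on $i$ then finishes all cases.

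I do not anticipate a genuine obstacle here; the only point requiring a little care is the passage from "$M$ almost zero" to "$M \otimes_T F$ almost zero" when $F$ has infinite rank, which relies on almost zero modules being closed under arbitrary direct sums — and this follows from closure under direct limits (filtered colimits of finite subsums). A second mild subtlety is that the statement is phrased with $\approx 0$ rather than "almost zero"; since any almost zero module $L$ admits the identity map $L \to L$ with almost zero (indeed zero) kernel and cokernel, $L \approx 0$ is immediate from $L$ being almost zero, so proving the stronger "almost zero" conclusion suffices. I would present the resolution argument as the main proof, perhaps remarking that it also shows $\Tor^T_i(M,N)$ is honestly almost zero, not merely in the almost isomorphism class of $0$.
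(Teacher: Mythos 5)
Your argument is correct, but it is not the route the paper takes. The paper works in the \emph{first} variable: it writes $M$ as the direct limit of its finitely generated submodules, uses that $\Tor$ commutes with direct limits to reduce to $M$ finitely generated, and then concludes (implicitly) from the fact that a finitely generated almost zero module is annihilated by a \emph{single} element $b$ of arbitrarily small valuation --- take a product of annihilators of the generators --- so that multiplication by $b$, being zero on $M$, induces zero on $\Tor^T_i(M,-)$ by functoriality. You instead resolve the \emph{second} variable by free modules and observe that $\Tor^T_i(M,N)$ is a subquotient of $M\otimes_T F_i$, a direct sum of copies of $M$, then invoke closure of the almost zero class under direct sums (filtered colimits of finite subsums plus closure under extensions) and under subquotients (from the short exact sequence remark). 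Both proofs are complete; yours has the merit of using only the closure properties stated immediately after the definition and of making explicit that the conclusion is genuine almost vanishing rather than merely $\approx 0$, while the paper's reduction to finitely generated $M$ isolates the ``uniform annihilator'' phenomenon that it reuses elsewhere (e.g.\ in the proof of Proposition~\ref{diff} and Lemma~\ref{toralm}). Your dimension-shifting variant is likewise sound, though it is really the same computation packaged inductively.
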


\begin{proposition}
\label{grothalm}
Let $T$ be an algebra equipped with a value map. Suppose that $T$ is almost Cohen-Macaulay over a $d$-dimensional local ring $(R,\fm)$ and $M$ is a $T$-module. Then
$
\Tor_n^T(H^{d}_{\frak m}(T),M)\approx H^{d-n}_{\frak m}(M)
$
for all $n \ge 0$. In particular, $H^{d}_{\frak m}(T)$ is not almost zero as a $T$-module.
\end{proposition}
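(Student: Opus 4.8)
The plan is to compute the Tor modules via a flat resolution of $H^d_{\fm}(T)$ coming from the modified \v{C}ech complex, exactly as in the remark preceding the statement, but now working only up to almost isomorphism since $T$ is merely \emph{almost} Cohen-Macaulay. Let $\underline{x}=x_1,\dots,x_d$ be a system of parameters of $R$ generating $\fm$ up to radical, and let $C^{\bullet}(\underline{x};T)$ denote the modified \v{C}ech complex on $\underline{x}$ with coefficients in $T$; its terms are localizations of $T$, hence flat $T$-modules, and its cohomology is $H^i_{\fm}(T)$. The almost Cohen-Macaulay hypothesis says $H^i_{\fm}(T)\approx 0$ for $i\neq d$, so the complex $C^{\bullet}(\underline{x};T)$ is almost acyclic except in degree $d$, where its cohomology is $H^d_{\fm}(T)$. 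First I would argue that this almost-acyclic complex of flats computes $\Tor^T_n(H^d_{\fm}(T),M)$ up to $\approx$: tensoring with $M$ gives $C^{\bullet}(\underline{x};T)\otimes_T M\cong C^{\bullet}(\underline{x};M)$, whose cohomology is $H^{d-n}_{\fm}(M)$, while the hyper-Tor spectral sequence (or a direct truncation argument) together with Lemma~\ref{toralm1} and Lemma~\ref{spec} identifies this with $\Tor^T_n(H^d_{\fm}(T),M)$ up to almost isomorphism. Concretely, the two spectral sequences of the double complex obtained by replacing each flat term by itself (it is already flat) and by a flat resolution of $M$ both converge to the same hyper-Tor; one collapses because the terms are flat, the other has $E_2$-page built from $H^i(C^{\bullet})\otimes$-derived-functors, and almost-vanishing of $H^i(C^{\bullet})$ for $i\neq d$ plus Lemma~\ref{toralm1} kills all rows but one up to $\approx$, so Lemma~\ref{spec} gives the stated almost isomorphism $\Tor^T_n(H^d_{\fm}(T),M)\approx H^{d-n}_{\fm}(M)$.

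For the ``in particular'' clause, take $M=T/\fm T$. Then the formula with $n=d$ yields
$$
\Tor^T_d\!\big(H^d_{\fm}(T),\,T/\fm T\big)\ \approx\ H^0_{\fm}(T/\fm T)\ =\ T/\fm T,
$$
using that $\fm(T/\fm T)=0$ forces every element of $T/\fm T$ to be $\fm$-torsion, so $H^0_{\fm}(T/\fm T)=T/\fm T$ and the higher local cohomology vanishes. Now if $H^d_{\fm}(T)$ were almost zero, Lemma~\ref{toralm1} would give $\Tor^T_d(H^d_{\fm}(T),T/\fm T)\approx 0$, hence $T/\fm T\approx 0$ by transitivity of $\approx$ (as discussed after Definition~\ref{iso}, one must be slightly careful, but here everything is connected through an actual chain of almost isomorphisms with $0$, so $T/\fm T$ is almost zero). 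This contradicts the defining hypothesis that $T$ is almost Cohen-Macaulay, part of which is precisely that $T/\fm T$ is \emph{not} almost zero; when $v$ is normalized one can alternatively cite Example~\ref{coh}(i). Therefore $H^d_{\fm}(T)$ is not almost zero.

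The main obstacle I expect is the bookkeeping in the spectral sequence argument: one must make sure that ``almost zero'' propagates correctly through the pages of the hyper-Tor spectral sequence and through the finite filtration on the abutment, so that the conclusion is a genuine almost isomorphism $\approx$ in the sense of Definition~\ref{iso}(ii) rather than something weaker. The subtlety is that almost isomorphism is not transitive in full generality (as the remark after Definition~\ref{iso} stresses), so I would be careful to phrase each step either as an honest map with almost-zero kernel and cokernel, or to invoke Lemma~\ref{spec} as a black box, which already packages the spectral-sequence collapse into the relation $\approx$. A secondary but minor point is checking that the modified \v{C}ech complex on a system of parameters genuinely consists of flat $T$-modules and computes $H^{\bullet}_{\fm}(T)$ (true because $\fm$ and $(\underline{x})$ have the same radical, so \v{C}ech cohomology on $\underline{x}$ agrees with local cohomology), and that base change $C^{\bullet}(\underline{x};T)\otimes_T M\cong C^{\bullet}(\underline{x};M)$ holds on the nose — both are standard, so they will not require real work.
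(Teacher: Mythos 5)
Your proposal is correct and follows essentially the same route as the paper: both form the double complex of the \v{C}ech complex on a system of parameters with a projective resolution of $M$, compare the two spectral sequences (one collapsing by flatness of the \v{C}ech terms, the other degenerating up to $\approx$ via Lemma~\ref{toralm1} and Lemma~\ref{spec}), and then specialize to $M=T/\fm T$ to derive a contradiction with the non-almost-vanishing of $T/\fm T$. The only differences are cosmetic (cohomological versus homological indexing of the \v{C}ech complex).
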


\begin{proof}
Let $\underline{x}=x_1,\ldots,x_d$ be a system of parameters of $R$ and let
$$
\begin{CD}
K_{\bullet}:0 @>>> K_d @>>>\cdots @
>>>K_1 @>>> K_0  @>>>0
\\
\end{CD}
$$
be the \v{C}ech complex of $T$ with respect to $\underline{x}$, where $K_j:=\underset{1\leq i_1<\cdots< i_{n-j}\leq d}\bigoplus T_{x_{i_1} \cdots x_{i_{n-j}}}$. Note that
for each $T$-module $L$, we have $$H_i(K_{\bullet}\otimes_TL)=H^{d-i}_{\frak m}(L).$$ Let us take a projective resolution of the $T$-module $M$:
$$
\begin{CD}
P_{\bullet}:\cdots @>>> P_n @>>>\cdots @ >>>P_1 @>>> P_0  @>>>0.
\\
\end{CD}
$$
Form the tensor product $P_{\bullet}\otimes_T K_{\bullet}$, which is the first quadrant bicomplex. First we take vertical and then horizontal homology. Note that $K_{\bullet}$ consists of flat modules and keep in mind that exactness is preserved upon taking  tensor product with flat modules. Thus, $^{\textbf{I}}E^2_{p,q}=0$ for all $p \neq 0$ and $$^{\textbf{I}}E^2_{0,q}=H_q(K_{\bullet}\otimes_T M)\simeq H^{d-q}_{\frak m}(M).$$
By \cite[Theorem 11.17]{Rot}, there is the following  spectral sequence:
$$
^{\textbf{I}}E^2_{p,q}\underset{p}\Rightarrow H_n(\Tot(P_{\bullet}\otimes_T K_{\bullet})).
$$
Since the spectral sequence collapses, we have
$$
H_i(\Tot(P_{\bullet}\otimes_T K_{\bullet})) \simeq \\
^{\textbf{I}}E^2_{0,i}=H^{d-i}_{\frak m}(M) \  \  (\ast).
$$
Now we take horizontal and then vertical homology and recall that flat functor commutes with homology functors. Then
$$
\begin{array}{ll}
^{\textbf{II}}E^2_{p,q}&=H_p[H^{d-q}_{\frak m}(P_{p-1})\longrightarrow H^{d-q}_{\frak m}(P_p)
\longrightarrow H^{d-q}_{\frak m}(P_{p+1})] \\
&\simeq H_p[H^{d-q}_{\frak m}(T)\otimes_TP_{p-1}\longrightarrow H^{d-q}_{\frak m}(T)\otimes_TP_p \longrightarrow H^{d-q}_{\frak m}(T)\otimes_TP_{p+1}]\\&\simeq \Tor_p^T(H^{d-q}_{\frak m}(T),M).
\end{array}
$$
Recall from \cite[Theorem 11.17]{Rot} that
$$
^{\textbf{II}}E^2_{p,q}\underset{p}\Rightarrow H_i(\Tot(P_{\bullet}\otimes_T K_{\bullet})).
$$
Keeping in mind that $H^{d-q}_{\frak m}(T)\approx0 $ for all $q\neq0$, we have $\Tor_p^T(H^{d-q}_{\frak m}(T),M) \approx 0$ by Lemma \ref{toralm1} for all $q\neq0$. It follows from Lemma \ref{spec} that
$$
^{\textbf{II}}E^2_{i,0}\approx H_i(\Tot(P_{\bullet}\otimes_T K_{\bullet})).
$$
Combining the last observation with $(\ast)$, we see that $\Tor_i^T(H^{d}_{\frak m}(T),M)\approx H^{d-i}_{\frak m}(M)$, which is our first claim.

Now we prove the second claim. Suppose that $H^{d}_{\frak m}(T) \approx 0$. Then in view of Lemma \ref{toralm1} and the first claim, we see that $H^{0}_{\frak m}(M)\approx\Tor_d^T(H^{d}_{\frak m}(T),M)\approx 0$. Applying this for $M=T/\fm T$, we have $H^{0}_{\frak m}(T/\fm T)\simeq T/\fm T\approx 0$, which is a contradiction.
\end{proof}

\begin{remark}
Although we are concerned with algebras rather than modules, we would like to address the following question: Let $T$ be an algebra equipped with a value map over a local ring $(R,\fm)$ and let $M$ be an almost Cohen-Macaulay $T$-module. Then is $H^{\dim R}_{\fm} (M)$ not almost zero? For an affirmative answer, see \cite{AT2}.
\end{remark}

\begin{lemma}
\label{AT} Let $B$ be a coherent ring and let $M$ be a finitely presented $B$-module. Then we have the following assertions:

\begin{enumerate}
\item[$\mathrm{(i)}$]
The Koszul $($co$)$homology modules of $M$ are finitely presented.
\item[$\mathrm{(ii)}$]
Let $\underline{x}_\ell=x_1,\ldots,x_{\ell}$ be a sequence of elements in the Jacobson radical of $B$ such that $\Kgrade_B(\underline{x}_{\ell};M)=\ell$. Then $\Kgrade_B(\underline{x}_{i};M)= i$ for all $1 \le i \le \ell$.
\item[$\mathrm{(iii)}$]
Let $\underline{x}_\ell=x_1,\ldots,x_{\ell}$  be a sequence of elements in the Jacobson radical of $B$ such that $\Cgrade_B(\underline{x}_{\ell};M)=\ell$. Then $\Cgrade_B(\underline{x}_{i};M)=i$ for all $1 \le i \le \ell$.
\end{enumerate}
\end{lemma}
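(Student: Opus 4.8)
The plan is to dispose of (i) first, since it is what makes the Nakayama-type arguments in (ii) and (iii) run. The Koszul complex $\mathbb{K}_{\bullet}(\underline{x})$ is a bounded complex of finitely generated free $B$-modules, so every term of $\mathbb{K}_{\bullet}(\underline{x})\otimes_B M$ and of $\Hom_B(\mathbb{K}_{\bullet}(\underline{x}),M)$ is a finite direct sum of copies of $M$, hence finitely presented. I would then invoke the standard fact that over a coherent ring the full subcategory of finitely presented modules is abelian, i.e.\ closed under kernels and cokernels of $B$-linear maps (equivalently, a finitely generated submodule of a finitely presented module is finitely presented). Since each Koszul (co)homology module is a subquotient of the finitely presented terms above, it is finitely presented. (If $\fa$ is not finitely generated one would pass to finite subideals, but the statement concerns a fixed finite sequence, so this is unnecessary here.)

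For (ii) the key tool is a ``one variable at a time'' comparison via the mapping cone. Writing $\underline{x}_i=\underline{x}_{i-1},x_i$, the identification $\mathbb{K}^{\bullet}(\underline{x}_i)\cong\mathbb{K}^{\bullet}(\underline{x}_{i-1})\otimes_B\mathbb{K}^{\bullet}(x_i)$ exhibits $\Hom_B(\mathbb{K}_{\bullet}(\underline{x}_i),M)$ as the mapping cone of multiplication by $x_i$ on $\Hom_B(\mathbb{K}_{\bullet}(\underline{x}_{i-1}),M)$, yielding for every $j$ a short exact sequence
\[
0\longrightarrow H^{j-1}(\underline{x}_{i-1};M)/x_iH^{j-1}(\underline{x}_{i-1};M)\longrightarrow H^{j}(\underline{x}_i;M)\longrightarrow (0:_{H^{j}(\underline{x}_{i-1};M)}x_i)\longrightarrow 0 .
\]
Put $g=\Kgrade_B(\underline{x}_{i-1};M)$. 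The sequence shows $H^{j}(\underline{x}_i;M)=0$ for $j<g$, while for $j=g+1$ the left-hand term is $H^{g}(\underline{x}_{i-1};M)/x_iH^{g}(\underline{x}_{i-1};M)$, which is nonzero by Nakayama: $H^{g}(\underline{x}_{i-1};M)$ is finitely generated by (i) and nonzero, $M\ne 0$ (forced by the finiteness of $\Kgrade_B(\underline{x}_{\ell};M)=\ell$), and $x_i$ lies in the Jacobson radical. Hence $\Kgrade_B(\underline{x}_{i-1};M)\le\Kgrade_B(\underline{x}_i;M)\le\Kgrade_B(\underline{x}_{i-1};M)+1$. Together with the elementary bound $\Kgrade_B(\underline{x}_i;M)\le i$ — because the top Koszul cohomology is $H^{i}(\underline{x}_i;M)=M/(\underline{x}_i)M\ne 0$ by Nakayama — a descending induction on $i$ starting from $i=\ell$ finishes the proof: assuming $\Kgrade_B(\underline{x}_i;M)=i$, the inequality gives $\Kgrade_B(\underline{x}_{i-1};M)\ge i-1$, and the elementary bound gives $\le i-1$.

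For (iii) I would simply combine (ii) with Remark~\ref{non}. For each $j$, the \v{C}ech grade of the sequence $\underline{x}_j$ on $M$ equals the \v{C}ech grade of the ideal $(\underline{x}_j)$ on $M$, which by Remark~\ref{non} (that is, \cite[Proposition 2.3]{AT}) equals $\Kgrade_B((\underline{x}_j),M)=\Kgrade_B(\underline{x}_j;M)$. Thus the hypothesis $\Cgrade_B(\underline{x}_{\ell};M)=\ell$ and the desired conclusion $\Cgrade_B(\underline{x}_i;M)=i$ translate verbatim into the Koszul statements handled in (ii). One could instead rerun the argument of (ii) with the modified \v{C}ech complex, which admits the same mapping-cone decomposition when one variable is appended, but then one would have to argue finite generation of the \v{C}ech cohomology, so routing through Remark~\ref{non} is cleaner.

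The only genuine obstacle is the Nakayama step in (ii): it is precisely there that coherence is used, via (i), to guarantee that the relevant Koszul cohomology is finitely generated (over a non-coherent ring these modules can fail to be finitely generated, and the conclusion of (ii) genuinely fails). Everything else is bookkeeping with the Koszul mapping-cone long exact sequence. I would also be mildly careful about the degenerate case $M=0$ and the convention $\inf\varnothing=\infty$, but the assumed finiteness of $\Kgrade_B(\underline{x}_{\ell};M)$ excludes it.
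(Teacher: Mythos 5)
Your proposal is correct and follows essentially the same route as the paper: finite presentation of the Koszul (co)homology via coherence, the mapping-cone long exact sequence plus Nakayama to obtain $\Kgrade_B(\underline{x}_{i+1};M)\le \Kgrade_B(\underline{x}_{i};M)+1$, and the reduction of (iii) to (ii) through Remark~\ref{non}. The only cosmetic difference is that you make explicit the upper bound $\Kgrade_B(\underline{x}_{i};M)\le i$ coming from the nonvanishing of the top Koszul cohomology $M/(\underline{x}_{i})M$, a point the paper leaves implicit.
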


\begin{proof}
\begin{enumerate}
\item[$\mathrm{(i)}$]
Let
$$
\begin{CD}
0 @>>> \cdots @>>> K^i @>\varphi^i>>  K^{i+1} @>>> \cdots @>>> 0 \\
\end{CD}
$$
be the Koszul complex of $M$ with respect to $\underline{x}$. Since $\im\varphi^i$ is finitely presented over a coherent ring $B$, it is coherent. Combining this with the following short exact sequence
$$
\begin{CD}
0 @>>>\ker\varphi^i @>>> K^i @>>>\im\varphi^i @>>> 0,
\end{CD}$$
we find that $\ker\varphi^i$ is finitely presented by \cite[Theorem 2.2.1]{G}. The claim follows from this.

\item[$\mathrm{(ii)}$]
Let $1 \le i< \ell$ and consider the following long exact sequence:
$$
\begin{CD}
\cdots @>>> H^{k}(\underline{x}_i;M) @>{x_{i+1}}>> H^{k}(\underline{x}_i;M)@>>>H^{k+1}(\underline{x}_{i+1};M) @>>> \cdots.\\
\end{CD}
$$
Then $H^{k}(\underline{x}_{i};M)$ is finitely presented in view of (i) and Nakayama's lemma yields that $$\Kgrade_B(\underline{x}_{i+1};M)\le \Kgrade_B(\underline{x}_{i};M)+1.$$ By using an induction, we get $\Kgrade_B(\underline{x}_{\ell};M)\leq \Kgrade_B(\underline{x}_{i};M)+(\ell-i)$, which implies that $\Kgrade_B(\underline{x}_{i};M)=i$, as claimed.

\item[$\mathrm{(iii)}$]
In view of (ii), it suffices to recall from Remark \ref{non} that Koszul grade coincides with \v{C}ech grade.
\end{enumerate}
\end{proof}

\begin{example}
While \v{C}ech grade has many common properties with classical grade for Noetherian rings, one difference is that a ring $A$ may contain a finite sequence $\underline{x}_\ell=x_1,\ldots,x_{\ell}$ such that $\Cgrade_A(\underline{x}_{\ell};A)=\ell$, but $\Cgrade_A(\underline{x}_{i};A)\neq i$. Let $R=\mathbb{Q}[[x,y]]$, $X(1)=\{\fp\in\Spec R~|~\Ht\fp\leq 1\}$, $M_{1}=\bigoplus_{\fp\in X(1)} R_{\fp}/\fp R _{\fp}$, and let $A=R \ltimes M_{1}$, the trivial extension of $R$ by $M_{1}$. Then $A$ is quasilocal with a unique maximal ideal $\fn=((x,y)R,M_1)$. Note that polynomial grade coincides with \v{C}ech grade \cite[Proposition 2.3 (i)]{AT}. In light of \cite[Example 2.10]{HM}, we see that $\Cgrade_{A}(\fa,A)=0$ for all ideals $\fa \subseteq A$ with the property that $\rad(\fa)\neq\fn$. Set $x_1=(x,0)$ and $x_2=(y,0)$. It follows that $\Cgrade_{A}(x_1,x_2;A)=2$ and $\Cgrade_{A}(x_i;A)=0$ for $i=1,2$.
\end{example}

A permutation of a regular sequence is not necessarily regular. However, we have the following result:

\begin{corollary}
\label{prop}
Let $B$ be a coherent ring, let $M$ be a finitely presented $B$-module, and let $\underline{x}=x_1,\ldots,x_{\ell}$ be a sequence of elements in the Jacobson radical of $B$ such that $\underline{x}$ is an $M$-regular sequence. Then any permutation of $\underline{x}$ is $M$-regular. In particular, any quasilocal coherent big Cohen-Macaulay algebra is a balanced big Cohen-Macaulay algebra.
\end{corollary}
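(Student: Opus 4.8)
The plan is to reduce the statement about permutations of regular sequences to the grade equalities established in Lemma~\ref{AT}, exactly the way one does it over Noetherian rings. First I would recall that for a finitely presented module $M$ over a coherent ring $B$ and a sequence $\underline{x}_\ell=x_1,\ldots,x_\ell$ of elements of the Jacobson radical, the condition that $\underline{x}_\ell$ is $M$-regular is, by Nakayama's lemma and Lemma~\ref{AT}(i), equivalent to $\Kgrade_B(\underline{x}_\ell;M)=\ell$ together with $M/(\underline{x}_\ell)M\neq 0$. Indeed, if $\underline{x}_\ell$ is $M$-regular then the Koszul homology $H^i(\underline{x}_\ell;M)$ vanishes for $i<\ell$ by the standard depth-sensitivity argument, giving $\Kgrade\ge\ell$, and $\Kgrade\le\ell$ always; conversely, from $\Kgrade_B(\underline{x}_\ell;M)=\ell$ one gets, via Lemma~\ref{AT}(ii), that $\Kgrade_B(\underline{x}_i;M)=i$ for all $i\le\ell$, and then the long exact Koszul sequence in the variable $x_{i+1}$ relating $H^\bullet(\underline{x}_i;M)$ to $H^\bullet(\underline{x}_{i+1};M)$ forces $x_{i+1}$ to be a nonzerodivisor on $M/(\underline{x}_i)M$ (using finite presentation of the Koszul homology and Nakayama to kill the relevant module), so $\underline{x}_\ell$ is $M$-regular and in particular $M/(\underline{x}_\ell)M\neq0$.

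Granting this characterization, the corollary is immediate: the Koszul grade $\Kgrade_B((\underline{x});M)$ depends only on the ideal $(\underline{x})=(x_1,\ldots,x_\ell)$, not on the order of the generators, and likewise the quotient $M/(\underline{x})M$ is unchanged under permutation. So if $\underline{x}$ is $M$-regular, then for any permutation $\underline{x}'$ of $\underline{x}$ we still have $\Kgrade_B((\underline{x}');M)=\ell$ and $M/(\underline{x}')M\neq 0$, and the elements of $\underline{x}'$ still lie in the Jacobson radical; hence $\underline{x}'$ is $M$-regular by the converse direction above. I would phrase the argument so that it invokes Lemma~\ref{AT}(ii) directly for the sequence $\underline{x}'$.

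The one step that needs genuine care — the main obstacle — is the passage from ``$\Kgrade_B(\underline{x}_i;M)=i$ for all $i\le\ell$'' to ``$x_{i+1}$ is a nonzerodivisor on $M/(\underline{x}_i)M$,'' i.e. the reconstruction of an honest regular sequence from the grade data. This is where coherence and the Jacobson-radical hypothesis are essential: one uses that $H^i(\underline{x}_i;M)$ is finitely presented (Lemma~\ref{AT}(i)), examines the long exact sequence
$$
\cdots \longrightarrow H^{i-1}(\underline{x}_i;M)\overset{x_{i+1}}{\longrightarrow} H^{i-1}(\underline{x}_i;M)\longrightarrow H^{i}(\underline{x}_{i+1};M)\longrightarrow \cdots,
$$
and, since $H^{i-1}(\underline{x}_i;M)=0$ while $H^i(\underline{x}_i;M)=H^0(\underline{x}_{i+1};\,\cdot\,)$-type terms behave well, deduces that multiplication by $x_{i+1}$ is injective on the top nonvanishing Koszul cohomology, which one identifies with $M/(\underline{x}_i)M$ up to the usual self-duality of the Koszul complex on a finite free module. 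Over a coherent ring this identification and the Nakayama step go through without change; the nonvanishing $M/(\underline{x})M\neq0$ is preserved trivially. I would therefore organize the proof as: (1) recall the grade characterization of regular sequences in the coherent setting; (2) observe permutation-invariance of $\Kgrade((\underline{x});M)$ and of $M/(\underline{x})M$; (3) apply Lemma~\ref{AT}(ii) to the permuted sequence and conclude. The delicate point (1) is essentially contained in the proof of Lemma~\ref{AT}, so in the write-up it can be cited rather than redone.
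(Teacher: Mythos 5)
Your proposal is correct and follows essentially the same route as the paper: permutation-invariance of the Koszul grade of the ideal $(\underline{x})$, an application of Lemma~\ref{AT}(ii) to the permuted sequence, and an induction via the Koszul long exact sequences to recover an honest regular sequence from the grade data. The ``delicate point'' you isolate is precisely the inductive step marked $(\ast)$ in the paper's own proof, so the two arguments coincide.
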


\begin{proof}
Let $\sigma$ be a permutation on the set $\{1,\ldots,\ell\}$ and set $\sigma(\underline x):=x_{\sigma(1)},\ldots,x_{\sigma(\ell)}$. Then $\Kgrade_B(\underline{x};M)=\ell$. Since Koszul grade of a finitely generated ideal is independent of the choice of the generators of the ideal, we have $\Kgrade_B(\sigma(\underline x);M)=\ell$. In view of Lemma \ref{AT} (ii), we find for each $1\leq i \leq \ell$ that
$$
\Kgrade_B(\sigma(\underline{x}_i);M)=i
$$
for $\sigma(\underline{x}_i)=x_{\sigma(1)},\ldots,x_{\sigma(i)}$. It then turns out that
$$
\Kgrade_B\Big(\sigma(\underline x);\frac{M}{(\sigma(\underline{x}_i))M}\Big)=\ell-i \ \ (\ast).
$$
By induction on $\ell$, we show that $\sigma(\underline x)$ is $M$-regular. If $\ell=1$, then in view of
$$
H^0(x_{\sigma(1)};M)=(0:_Mx_{\sigma(1)}),
$$
there is nothing to prove. To conclude the claim in the general case, apply $(\ast)$.
\end{proof}

\begin{remark} \label{AT2}
Let $\mathfrak{T}$ be a class of $B$-modules. Then $\mathfrak{T}$ is called a \textit{torsion theory}, if it is closed under taking submodules, quotients, extensions, and the direct limit. Let $\mathfrak{T}$ be a torsion theory and let $M$ be a $B$-module. For an ideal $\fa \subseteq B$ generated by $\underline{x}=x_1,\ldots,x_d$, set:

\begin{enumerate}
\item[$\mathrm{(i)}$]
$\mathfrak{T}-\Cgrade_{B}(\fa,M):=\inf\{i \in \mathbb{N}_{0}~|~H_{\fa}^{i}(M)\notin \mathfrak{T}\}$,

\item[$\mathrm{(ii)}$]
$\mathfrak{T}-\Kgrade_{B}(\fa,M):=\inf\{i \in \mathbb{N}_{0}~|~H^{i}(\mathbb{K}^{\bullet}(\underline{x};M)) \notin \mathfrak{T}\}$.
\end{enumerate}
Then \cite[Theorem 1.1 (i)]{AT2} states that $\mathfrak{T}-\Cgrade_{B}(\fa,M)=\mathfrak{T}-\Kgrade_{B}(\fa,M)$.
\end{remark}

The next proposition gives a partial answer to a question of Roberts, Singh, and Srinivas in \cite[Page 239]{RSS} (also, see Theorems \ref{theorem2} and \ref{theorem20} below).

\begin{proposition}
\label{diff}
Let $B$ be a strict algebra over a local ring $(R,\fm)$ with $d=\dim R$ such that $B$ is equipped with a value map, and consider the following statements:

\begin{enumerate}
\item[$\mathrm{(a)}$]
$
\frac{((\underline{x}_{i-1})B:_{B}x_{i})}{(\underline{x}_{i-1})B} \approx 0
$
for $1 \le i \le d$ and any system of parameters $\underline{x}=x_{1},\ldots,x_{d}$ of $R$.

\item[$\mathrm{(b)}$]
$H_{i}(\underline{x};B)\approx 0$ for $i > 0$ and any system of parameters $\underline{x}=x_{1},\ldots,x_{d}$ of $R$.

\item[$\mathrm{(c)}$]
$H^{i}_{\fm}(B)\approx 0$  for $i < d$.
\end{enumerate}

Then we have the following assertions:

\begin{enumerate}
\item[$\mathrm{(i)}$] $(a)\Rightarrow(b)\Rightarrow(c)$.

\item[$\mathrm{(ii)}$] If $v$ is normalized and $B$ is quasilocal and coherent, then $(c)$ implies that $B$ is a balanced big Cohen-Macaulay $R$-algebra.
\end{enumerate}
\end{proposition}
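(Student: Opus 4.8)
The plan is to prove (i) by a hands-on induction with the Koszul long exact sequence together with the closure properties of almost zero modules, and then to derive (ii) from (c) in two moves: first pass from almost-vanishing of local cohomology to almost-vanishing of Koszul cohomology by means of the torsion-theoretic grade identity of Remark~\ref{AT2}, and then upgrade ``almost zero'' to ``zero'' using the normalization of $v$ and the coherence of $B$.

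\emph{Part (i).} Fix a system of parameters $\underline{x}=x_{1},\ldots,x_{d}$. For $(a)\Rightarrow(b)$ I show by induction on $j$ that $H_{i}(\underline{x}_{j};B)\approx 0$ for all $i>0$ and all $1\le j\le d$; taking $j=d$ then gives (b) for this (arbitrary) $\underline{x}$. The case $j=1$ is the statement $H_{1}(x_{1};B)=(0:_{B}x_{1})\approx 0$, which is (a) for $i=1$. For the step $j-1\to j$, the long exact sequence of Koszul homology obtained by adjoining $x_{j}$ to $\underline{x}_{j-1}$ presents $H_{i}(\underline{x}_{j};B)$ as an extension of $\bigl(0:_{H_{i-1}(\underline{x}_{j-1};B)}x_{j}\bigr)$ by $H_{i}(\underline{x}_{j-1};B)/x_{j}H_{i}(\underline{x}_{j-1};B)$; for $i\ge 2$ both of these are a submodule, resp.\ a quotient, of an almost zero module by the inductive hypothesis, while for $i=1$ the first term equals $((\underline{x}_{j-1})B:_{B}x_{j})/(\underline{x}_{j-1})B$, which is almost zero by (a), and the second is a quotient of $H_{1}(\underline{x}_{j-1};B)$. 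Since almost zero modules are closed under submodules, quotients and extensions, $H_{i}(\underline{x}_{j};B)\approx 0$ in every case. For $(b)\Rightarrow(c)$, recall that $H^{i}_{\fm}(B)$ is computed by the \v{C}ech complex of $B$ with respect to $\underline{x}$, which is the direct limit of the Koszul cochain complexes of $\underline{x}^{(t)}:=x_{1}^{t},\ldots,x_{d}^{t}$; hence $H^{i}_{\fm}(B)\simeq\varinjlim_{t}H^{i}(\underline{x}^{(t)};B)\simeq\varinjlim_{t}H_{d-i}(\underline{x}^{(t)};B)$ by Koszul self-duality. For $i<d$ each term is almost zero by (b) (as $\underline{x}^{(t)}$ is again a system of parameters of $R$), and direct limits of almost zero modules are almost zero, so $H^{i}_{\fm}(B)\approx 0$.

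\emph{Part (ii).} Let $\mathfrak{T}$ be the class of almost zero $B$-modules; by the facts recalled above it is closed under submodules, quotients, extensions and direct limits, hence is a torsion theory in the sense of Remark~\ref{AT2}. Since $H^{i}_{\fm}(B)$ is the \v{C}ech cohomology of $B$ with respect to $\underline{x}$, condition (c) says $\mathfrak{T}-\Cgrade_{B}(\underline{x};B)\ge d$, so by Remark~\ref{AT2} also $\mathfrak{T}-\Kgrade_{B}(\underline{x};B)\ge d$, i.e.\ $H^{i}(\underline{x};B)\approx 0$ for $i<d$, equivalently $H_{i}(\underline{x};B)\approx 0$ for $i>0$ by self-duality. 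Now $B$ is a coherent module over itself, so by Lemma~\ref{AT}(i) every $H_{i}(\underline{x};B)$ is finitely presented, hence weakly coherent, and therefore \emph{zero} by Example~\ref{coh}(i)(b), $v$ being normalized; thus $H_{i}(\underline{x};B)=0$ for $i>0$. Then $H^{i}(\underline{x};B)=0$ for $i<d$ while $H^{d}(\underline{x};B)=B/(\underline{x})B\ne 0$, because $(\underline{x})B\subseteq\fm B$ is a proper ideal of the quasilocal ring $B$; hence $\Kgrade_{B}(\underline{x};B)=d$. As $\underline{x}$ lies in the maximal ideal, which is the Jacobson radical of $B$, Lemma~\ref{AT}(ii) gives $\Kgrade_{B}(\underline{x}_{i};B)=i$ for all $1\le i\le d$; in particular $(0:_{B}x_{1})=0$, and feeding these equalities into the Koszul long exact sequences and using self-duality, exactly as in the proof of Corollary~\ref{prop}, one shows by induction on $i$ that $x_{i}$ is a non-zerodivisor on $B/(\underline{x}_{i-1})B$. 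Therefore $\underline{x}$ is a $B$-regular sequence; since $\underline{x}$ was arbitrary and $B$ is strict, $B$ is a balanced big Cohen-Macaulay $R$-algebra.

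The main obstacle is the opening move of Part (ii). Condition (c) only controls the limit $\varinjlim_{t}H^{i}(\underline{x}^{(t)};B)$, and the almost-vanishing of this limit does not, a priori, force the individual Koszul cohomology modules $H^{i}(\underline{x}^{(t)};B)$ — in particular the one at $t=1$ — to be almost zero, since the transition maps might annihilate an almost-nonzero class only in the limit. The grade identity $\mathfrak{T}-\Cgrade=\mathfrak{T}-\Kgrade$ of Remark~\ref{AT2} is precisely the device that circumvents this; once it is in hand, the normalization of $v$ together with the coherence of $B$ (through Example~\ref{coh} and Lemma~\ref{AT}) is exactly what permits replacing ``almost zero'' by ``zero'', after which the big Cohen-Macaulay property follows from the standard Koszul-complex induction. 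The remaining steps are formal manipulations of the Koszul long exact sequence and of the closure properties of almost zero modules.
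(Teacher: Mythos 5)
Your proof is correct, and for part (ii) it follows the paper's route almost exactly: Remark~\ref{AT2} to convert almost-vanishing of \v{C}ech cohomology into almost-vanishing of Koszul cohomology, then coherence plus the normalization of $v$ (via Lemma~\ref{AT}(i) and Example~\ref{coh}(i)) to upgrade to actual vanishing, then Lemma~\ref{AT}(ii) and the induction of Corollary~\ref{prop}. Two minor divergences are worth recording. First, for $(a)\Rightarrow(b)$ the paper does not induct on closure properties of the class of almost zero modules; it cites the explicit annihilator-tracking argument of Dietz, showing that if $c$ kills the colon quotients then $c^{2\dim R-1}$ kills $H_i(\underline{x};B)$, which keeps quantitative control of valuations. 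Your version, using that almost zero modules are closed under submodules, quotients and extensions in the Koszul long exact sequence, is cleaner and equally valid since the paper establishes exactly those closure properties in Section~3. Second, in part (ii) the paper first invokes Proposition~\ref{grothalm} to conclude that $H^d_{\fm}(B)$ is not almost zero, so that the torsion-theoretic \v{C}ech grade equals $d$ exactly; you correctly observe that only the inequality $\mathfrak{T}\text{-}\Cgrade_B(\fm,B)\ge d$ is needed to feed into Remark~\ref{AT2}, with the nonvanishing of $H^d(\underline{x};B)=B/(\underline{x})B$ supplied directly by strictness, so your argument is marginally more economical. Your closing remark about why Remark~\ref{AT2} is indispensable (the limit over $t$ of $H^i(\underline{x}^{(t)};B)$ being almost zero does not a priori control the $t=1$ term) is precisely the right diagnosis of where a naive approach would fail.
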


\begin{proof}
We keep the notation as in the proposition.
\begin{enumerate}
\item[$\mathrm{(i)}$:]
(a) $\Rightarrow$ (b) Assume that we have
$$
c \cdot \frac{((\underline{x}_{i-1})B:_{B}x_{i})}{(\underline{x}_{i-1})B}=0.
$$
Then the standard inductive argument as in \cite[Lemma 4.2]{Die} shows that $c^{2 \cdot \dim R-1} \cdot H_{i}(\underline{x};B)=0$, from which we get the desired claim.\\
(b) $\Rightarrow$ (c) Let us recall that almost zero modules are closed under taking direct limit. Now $H^{i}_{\fm}(B)\simeq \varinjlim_{n} H_{d-i}(\underline{x}^{n};B)$ finishes the proof.

\item[$\mathrm{(ii)}$:]
Note that $B$ is almost Cohen-Macaulay, because $B/(\underline{x})B$ is not almost zero due to Example \ref{coh} (i). Then in view of Proposition \ref{grothalm}, $H^{d}_{\frak m}(B)$ is not almost zero. Let $\mathfrak{T}_v$ denote the torsion theory of almost zero modules. By Remark \ref{AT2}, $\mathfrak{T}_v-\Cgrade_{B}(\fm,B)=d$ and $\mathfrak{T}_v-\Kgrade_{B}(\fm,B)=d$. Hence we have $H^{i}(\underline{x};B)\approx 0$ for all  $0\leq i < d$ and any system of parameters $\underline{x}=x_{1},\ldots,x_{d}$ of $R$. The Koszul cohomology modules of $M$ with respect to $\underline{x}$ are finitely presented by Lemma \ref{AT} (i). Note that finitely presented modules over coherent rings are coherent. Thus, $H^{i}(\underline{x};M)$ is coherent for all $i \ge 0$. In view of Example \ref{coh} (i), the Koszul complex of $B$ with respect to $\underline{x}$ is acyclic and hence, $\Kgrade_B(\underline{x};M)=d$. By Lemma \ref{AT} (ii), $\Kgrade_B(\underline{x}_{i};M)=i$  for all $i \ge 0$. Then we find that
$$
\Kgrade_B\Big(\underline{x};\frac{M}{(\underline{x}_{i})M}\Big)=d-i.
$$
To conclude, it suffices to apply the usual induction as in Corollary \ref{prop}.
\end{enumerate}
\end{proof}

Our next main results are Theorems \ref{theorem2} and \ref{theorem20}.

\begin{definition}\label{big}
Let $T$ be a strict algebra over a local ring $(R,\fm)$, equipped with a normalized value map. Then $T$ is called \textit{big}, if there exists a sequence of non-zero divisors $\{c_{n} \in T~|~n \in \mathbb{N}\}$ together with a sequence $\{\epsilon_{n} \in \mathbb{R}_{> 0}~|~n \in \mathbb{N}\}$ such that $\underset{n\to \infty}{\lim}\epsilon_{n}=0$, $v(c_{n})=\epsilon_{n}$, and if $m < n$, then $c_{m}c_{n}^{-1} \in T[c_{n}^{-1}]$ is contained in  $c_{n}T \subseteq T[c_{n}^{-1}]$.
\end{definition}

\begin{example}
\label{Example}
Let us give some examples of big algebras which are constructed by taking integral extensions.

\begin{enumerate}
\item[$\mathrm{(i)}$]
If $R$ is any domain of characteristic $p>0$, then the \textit{perfect closure} of $R$ is defined as $R_{\infty}:=\bigcup_{n>0} R^{p^{-n}}$. If $R$ is a complete local domain, then
$$
\frac{((\underline{x}_{i-1})R_{\infty}:_{R_{\infty}}x_{i})}{(\underline{x}_{i-1})R_{\infty}}\approx 0
$$
for all $1 \le i \le \dim R$ and every system of parameters $\underline{x}=x_{1},\ldots,x_{d}$ of $R$ (\cite{RSS} for a proof). We will discuss various properties of algebras of this type later.

\item[$\mathrm{(ii)}$]
In the mixed characteristic case, we want to consider the ring $T$ such that $R \subseteq T \subseteq R^{+}$ and sufficiently many $p$-power roots of elements of $T$ are contained in $T$. To be precise, let $R$ be a complete local domain of mixed characteristic $p>0$ with perfect residue field and let $A:=V[[x_{1},\ldots,x_{n}]] \twoheadrightarrow R$ be a surjection from a complete regular local ring, where $n$ is the number of generators of the maximal ideal of $R$. Then this surjection extends to a ring homomorphism $A^{+} \twoheadrightarrow R^{+}$. For a regular system of parameters $\pi_{V},x_{1},\ldots,x_{n}$ of $A$, we form a ring
$$
A_{\infty}:=\bigcup_{k>0} A[\pi_{V}^{p^{-k}},x^{p^{-k}}_{1},
\ldots,x^{p^{-k}}_{n}] \subseteq A^{+},
$$
and define $R_{\infty}$ to be the image of $A_{\infty}$ under the surjection $A^{+} \twoheadrightarrow R^{+}$. The Frobenius map on $R_{\infty}/pR_{\infty}$ is surjective, as the same holds for $A_{\infty}$.

\item[$\mathrm{(iii)}$]
The construction of the perfect closure can be extended to reduced rings. Let $R$ be a reduced Noetherian ring of characteristic $p > 0$. Then the total ring of fractions of $R$ is a finite product of fields: $\prod_{i=1}^{n} K_{i}$. Denote by $\overline{K}_{i}$ the algebraic closure of $K_{i}$ and define $R_{n}:=R^{p^{-n}} \subseteq \prod_{i=1}^{n} \overline{K}_{i}$. Then $R_{\infty}:=\bigcup_{n > 0} R_{n}$ is called the \textit{minimal perfect closure} of $R$.
\end{enumerate}
\end{example}

In what follows, we write
$
J_B:=\bigcup_{n>0} c_{n}B
$
for a big algebra $B$. The following lemma is in the same spirit of Proposition \ref{separable}, so we omit the proof.

\begin{lemma}
\label{lemma1}
Let the notation be as above. Then we have $J_B \ne \fm J_B$. Moreover, let $I$ be a finitely generated ideal of $B$. Then for any given
integer $N > 0$, there exists $k>0$ such that $c_{k}^{N} \notin I$.
\end{lemma}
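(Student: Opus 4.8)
The plan is to exploit the defining properties of a big algebra $B$, namely the sequence of non-zero divisors $\{c_n\}$ with $v(c_n)=\epsilon_n\to 0$ and the compatibility $c_mc_n^{-1}\in B$ for $m\le n$. For the first assertion, suppose for contradiction that $J_B=\fm J_B$. Since $J_B=\bigcup_{n>0}c_nB$ is a directed union (the compatibility $c_mc_n^{-1}\in B$ gives $c_mB\subseteq c_nB$ for $m\le n$, so the union is increasing), every element of $J_B$ lies in some $c_nB$. Applying $J_B=\fm J_B$ one obtains, for a fixed $n$, that $c_n\in\fm J_B$, so $c_n=\sum_{j} a_j c_{k_j} b_j$ with $a_j\in\fm$, $b_j\in B$, and we may take a common index $k\ge n$ so that $c_n=c_k y$ with $y\in\fm B$ (absorbing $c_{k_j}c_k^{-1}\in B$). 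But $c_n=c_k\cdot(c_nc_k^{-1})$ with $c_nc_k^{-1}\in B$ as well, and since $c_k$ is a non-zero divisor we get $c_nc_k^{-1}=y\in\fm B$. Now take valuations: $v(c_nc_k^{-1})=\epsilon_n-\epsilon_k$, which can be made strictly smaller than, say, $\min\{v(a)\,:\,a\in\mathfrak{m}\}$ — wait, that infimum may be $0$, so instead I argue directly. The cleaner route: $y\in\fm B$ means $v(y)>0$ is bounded below by $\min$ of the valuations of a fixed finite generating set of $\fm$ times... no — rather, write $y=\sum m_i b_i$ with $m_i$ ranging over the finitely many generators of $\fm$; then $v(y)\ge\min_i v(m_i)=:\delta>0$, a fixed positive constant independent of $n$. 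On the other hand $v(y)=\epsilon_n-\epsilon_k\le\epsilon_n\to 0$, so choosing $n$ large with $\epsilon_n<\delta$ yields a contradiction. Hence $J_B\ne\fm J_B$.

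For the second assertion, let $I=(a_1,\ldots,a_r)$ be a finitely generated ideal of $B$ and fix an integer $N>0$. Suppose for contradiction that $c_k^N\in I$ for every $k>0$. Writing $c_k^N=\sum_{i=1}^r t_{k,i}a_i$ with $t_{k,i}\in B$ (so $v(t_{k,i})\ge 0$, as $v$ is normalized and hence non-negative on all of $B$), we get
$$
N\epsilon_k=v(c_k^N)=v\Big(\sum_{i=1}^r t_{k,i}a_i\Big)\ge\min_{1\le i\le r}\big(v(t_{k,i})+v(a_i)\big)\ge\min_{1\le i\le r}v(a_i)=:\delta.
$$
Here $\delta\ge 0$; in fact we may assume $\delta>0$, for if some $a_i$ were a unit then $I=B$ and the statement is vacuous (no such $k$ can fail the conclusion since $c_k^N$ is a non-unit, its valuation being $N\epsilon_k>0$, so $c_k^N\notin B=I$ only if... actually if $I=B$ then $c_k^N\in I$ always, so one must handle this case by noting the statement as phrased presupposes $I$ proper — or reinterpret: when $I=B$ the claim is false, so implicitly $I$ is proper; I will state this hypothesis). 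Granting $\delta>0$, the inequality $N\epsilon_k\ge\delta$ for all $k$ contradicts $\epsilon_k\to 0$. Therefore some $c_k^N\notin I$.

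The main obstacle I anticipate is the bookkeeping in the first part: making precise that $J_B=\fm J_B$ forces a relation $c_nc_k^{-1}\in\fm B$ for suitable $k\ge n$, which requires careful use of the directedness of the union $\bigcup c_nB$ together with the compatibility condition $c_mc_n^{-1}\in B$ to clear denominators consistently; the valuation estimate afterwards is then immediate and parallels the proof of Proposition~\ref{separable}. A secondary (minor) point is the implicit hypothesis that $I$ is a proper ideal in the second assertion, which I will note explicitly since otherwise the conclusion fails; under that hypothesis the argument is a direct valuation estimate using that $v$ is non-negative on $B$ and $v(c_k)=\epsilon_k\to 0$.
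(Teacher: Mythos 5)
Your proof is correct and is precisely the valuation-estimate argument the paper intends: the authors omit the proof, saying it is ``in the same spirit of'' Proposition~\ref{separable}, and your two steps (bounding $v$ from below by the minimum valuation of a finite generating set, against $v(c_k)=\epsilon_k\to 0$) are exactly that argument, with the directedness $c_mB\subseteq c_kB$ correctly used to reduce $c_n\in\fm J_B$ to $c_nc_k^{-1}\in\fm B$. The only points worth recording explicitly are that $\delta=\min_i v(m_i)>0$ because $B$ is strict, so the generators of $\fm$ map to non-units of $B$ and $v$ is normalized; and, as you rightly flag, the second assertion implicitly assumes $I$ is proper, which is what guarantees $\min_i v(a_i)>0$.
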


Let $\widehat{B}$ be the $\fm$-adic completion of an algebra $B$ over a local ring $(R,\fm)$. The following theorem may be seen as an almost version of Lemma \ref{proposition1}. However, it is not clear at all, if the local cohomology modules of $B$ are almost zero, but their annihilators are quite complicated, then $B$ maps to a big Cohen-Macaulay algebra.

\begin{theorem}
\label{theorem2}
Let $B$ be a big algebra equipped with a sequence $\{c_{n} \in B~|~n \in \mathbb{N}\}$ satisfying the given conditions of Definition \ref{big} over a $d$-dimensional local ring $(R,\fm)$ and let $\underline{x}:=x_{1},\ldots,x_{d}$ be a system of parameters for $R$. Suppose that $c_{n} \cdot H^{i}_{\fm}(B)=0$ for all $n > 0$ and $i \ne d$. Then
$$
c_{n} \cdot \big((x_{1},\ldots,x_{k-1})\widehat{B}:_{\widehat{B}}
 x_{k}\big) \subseteq (x_{1},\ldots,x_{k-1})\widehat{B}
$$
for all $k \le d$.
\end{theorem}

\begin{proof}
Recall that $B$ is equipped with a normalized value map and $\{c_{n} \in B~|~n \in \mathbb{N}\}$ consists of non-zero divisors of $B$. Then there is a commutative diagram:
$$
\begin{CD}
B @> c_{n}c_{n+1}^{-1} >> B \\
@V c_{n} V \wr V @V c_{n+1} V \wr V \\
c_{n} B @>>> c_{n+1} B \\
\end{CD}
$$
in which the second horizontal map is the natural inclusion. Then we have
$$
\begin{CD}
J_B=\displaystyle\varinjlim_{n \in \mathbb{N}}\big(\cdots @>>> B @>
c_{n}c_{n+1}^{-1} >> B @> c_{n+1}c_{n+2}^{-1}>> B @>>> \cdots \big).
\end{CD}
$$

Keep in mind that $B$ is big. In view of Definition \ref{big}, there is a sequence $\{\epsilon_{n} \in \mathbb{R}_{> 0}~|~n \in \mathbb{N}\}$ such that $\underset{n\to \infty}{\lim}\epsilon_{n}=0$, $v(c_{n})=\epsilon_{n}$, and if $m < n$, then $c_{m}c_{n}^{-1} \in T[c_{n}^{-1}]$ is contained in  $c_{n}T \subseteq T[c_{n}^{-1}]$.
Recall that  $c_{n} \cdot H^{i}_{\fm}(B)=0$ for all $n > 0$ and $i \ne d$.
By Incorporating  these observations together, we see that
$$
\begin{CD}
H^{k}_{\fm}(J_B)=\displaystyle\varinjlim_{n \in \mathbb{N}}\big
(\cdots @>>> H^{k}_{\fm}(B) @> c_{n}c_{n+1}^{-1} >> H^{k}_{\fm}
(B) @>c_{n+1}c_{n+2}^{-1}>> H^{k}_{\fm}(B) @>>> \cdots \big)=0
\end{CD}
$$
for all $k \ne d$. By Lemma \ref{proposition1}, it follows that the sequence $x_{1},\ldots,x_{d}$ is quasi-regular on the $B$-module $J_B$, and $J_B \ne \fm J_B$ by Lemma \ref{lemma1}. Hence the $\fm$-adic completion $\widehat{J}_B$ is a balanced big Cohen-Macaulay $R$-module by Lemma \ref{proposition1}. For every $c_{n} \in J_B$, we have a well-defined map $c_{n}:B \to J_B$, which extends to an injective map $c_{n}:\widehat{B} \to \widehat{J}_B$. Now let $z \in \widehat{B}$ be such that $x_{i} \cdot z \in (x_{1},\ldots,x_{i-1})\widehat{B}$. Then we have
$$
c_{n} \cdot z \in \big((x_{1},\ldots,x_{i-1})\widehat{J}_B:_{\widehat{J}_B}
 x_{i}\big)=(x_{1},\ldots,x_{i-1})\widehat{J}_B \subseteq
 (x_{1},\ldots,x_{i-1})\widehat{B},
$$
for all $n > 0$. Then this proves the theorem.
\end{proof}

\begin{lemma}[\cite{Ho02}; Lemma 5.1]
\label{lemma3}
Let $M$ be a module over a local ring $(R,\fm)$, and let $x_{1},\ldots,x_{d}$ be a system of parameters for $R$. Suppose that $T$ is an $R$-algebra, that $c$ is a non-zero divisor of $T$, while there is an $R$-linear map $\alpha:M \to T[c^{-1}]$. Let $M \to M'$ be a partial algebra modification of $M$ with respect to an initial segment of $x_{1},\ldots,x_{d}$, with degree bound $D$. Suppose that for every relation $x_{k+1}t_{k+1}=\sum_{i=1}^{k}x_{i}t_{i}$, $t_{i} \in T$, we have that $ct_{k+1} \in (x_{1},\ldots,x_{k})T$. Finally, suppose that $\alpha(M) \subseteq c^{-N}T$ for some integer $N>0$. Then the map $\alpha:M \to T[c^{-1}]$ fits into the commutative square:
$$
\begin{CD}
T[c^{-1}] @= T[c^{-1}] \\
@A\alpha AA @A\beta AA \\
M @>>> M'
\end{CD}
$$
in which $\beta:M' \to T[c^{-1}]$ is an $R$-linear map with image contained in $c^{-(ND+D+N)}T$.
\end{lemma}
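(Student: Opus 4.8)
The plan is to follow Hochster's original argument for partial algebra modifications, adapting it to the coherent/almost setting by carefully tracking the denominators. Recall first what a partial algebra modification $M \to M'$ is: given a relation $x_{k+1}u_{k+1} = x_1 u_1 + \cdots + x_k u_k$ among images of generators $u_i$ of $M$ (so the $u_i$ are in the bottom free module mapping onto $M$), one forms $M' := (M \oplus \bigoplus_{i=1}^{k} R\,e_i)/N$, where $N$ is generated by the elements $x_{k+1} e_i - (u_i - x_i e_k$-type corrections$)$ forcing the relation $u_{k+1} = x_1 e_1 + \cdots + x_k e_k$ to hold in $M'$ (and $e_i$ has degree bounded by $D$, whence the degree bound $D$ on $M'$). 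To define $\beta$ it therefore suffices to say where the new generators $e_1,\ldots,e_k$ go and to check that the defining relations of $N$ are respected; then $\beta$ restricted to $M$ must agree with $\alpha$ via the map $M \to M'$.

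First I would examine the relation in $T[c^{-1}]$. Applying $\alpha$ to $x_{k+1}u_{k+1} = \sum_{i=1}^k x_i u_i$ gives $x_{k+1}\alpha(u_{k+1}) = \sum_{i=1}^k x_i \alpha(u_i)$ in $T[c^{-1}]$. Writing $t_i := c^N \alpha(u_i) \in T$ (using $\alpha(M) \subseteq c^{-N}T$), we obtain $x_{k+1} t_{k+1} = \sum_{i=1}^k x_i t_i$ in $T$, possibly after multiplying by a further power of $c$ to clear the $c^{-1}$ — here is where one must be slightly careful, but since $c$ is a non-zerodivisor the relation holds in $T$ on the nose after scaling by $c^N$. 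The hypothesis then furnishes $c t_{k+1} \in (x_1,\ldots,x_k)T$, say $c t_{k+1} = \sum_{i=1}^k x_i s_i$ with $s_i \in T$. The natural choice is then $\beta(e_i) := c^{-1} s_i \cdot (\text{appropriate degree-}D\text{ bookkeeping}) \in c^{-(N+1)}T$ or, after the degree-$D$ monomials are brought in, in $c^{-(ND+D+N)}T$; one checks that $\beta(u_{k+1}) = \sum x_i \beta(e_i) = c^{-1}\sum x_i s_i$-type expression $= \alpha(u_{k+1})$, which is exactly the relation imposed in $M'$, and that $x_{k+1}\beta(e_i) = \beta(u_i) - (\text{correction})$ holds in $T[c^{-1}]$ because $x_{k+1}$ is being inverted there anyway once we pass to $T[c^{-1}]$ — actually the relations $x_{k+1}e_i - (\cdots)$ map to zero since in $T[c^{-1}]$ we may solve for $e_i$. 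The degree bound $D$ enters because the new generators $e_i$ carry degree $\le D$, and each application of $\alpha$ to a degree-$D$ element of the free module multiplies the denominator by $c^{N}$-worth of clearing per unit degree, plus the extra $c$ from the hypothesis and the ambient $c^{N}$; bookkeeping these three contributions yields the stated exponent $ND + D + N$.

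The main obstacle I expect is purely the denominator accounting: making precise exactly how the degree bound $D$ interacts with the ``$\alpha(M) \subseteq c^{-N}T$'' hypothesis, and confirming that clearing denominators in the relation $x_{k+1}t_{k+1} = \sum x_i t_i$ together with invoking $c t_{k+1} \in (x_1,\ldots,x_k)T$ and then expressing the degree-$D$ generators costs precisely $c^{ND}$, $c^{D}$, and $c^{N}$ respectively and no more. Everything else — well-definedness of $\beta$ on $M'$ via checking it kills the defining relations, $R$-linearity, and commutativity of the square — is formal once the target denominator is pinned down. Since this is exactly \cite[Lemma 5.1]{Ho02}, I would present the argument as a transcription of Hochster's proof with the denominator estimate stated explicitly, and simply verify that the coherence/value-map context of the present paper does not affect the purely module-theoretic construction.
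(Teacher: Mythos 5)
The first thing to say is that the paper contains no proof of this statement: it is quoted as Lemma 5.1 of Hochster \cite{Ho02} and used as a black box in the proof of Theorem \ref{theorem2}. So the only meaningful comparison is with Hochster's original argument, which is what your sketch attempts to reconstruct.

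Your skeleton is right in outline: apply $\alpha$ to the relation $x_{k+1}u_{k+1}=\sum_{i=1}^{k}x_iu_i$, multiply by $c^{N}$ to get an honest relation $x_{k+1}t_{k+1}=\sum_{i=1}^{k}x_it_i$ in $T$ with $t_i=c^{N}\alpha(u_i)$ (legitimate since $c$ is a non-zerodivisor, so $T\to T[c^{-1}]$ is injective), invoke the hypothesis to write $ct_{k+1}=\sum_{i=1}^{k}x_is_i$, and send the new generators to $s_ic^{-(N+1)}$. But there is a genuine gap exactly at the point you defer to ``bookkeeping'', and it comes from misremembering what a partial algebra modification is. It is not $(M\oplus\bigoplus_{i=1}^{k}Re_i)$ modulo one relation; it is the quotient of $M\otimes_R R[U_1,\dots,U_k]_{\le D}$ (polynomials of degree at most $D$ in the new indeterminates, with coefficients in $M$) by the span of the elements $(u_{k+1}-\sum_{i=1}^{k}x_iU_i)\cdot U^{\nu}$ with $\deg\nu\le D-1$. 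With your linear-in-$e_i$ model the image of $\beta$ would land in $c^{-(N+1)}T$ and the exponent $ND+D+N$ could never arise, nor is well-definedness on the true $M'$ (which contains monomials of degree up to $D$ in the $U_i$) ever addressed. With the correct $M'$ one defines $\beta$ on $M[U]_{\le D}$ by $mU^{\mu}\mapsto\alpha(m)\prod_i\bigl(s_ic^{-(N+1)}\bigr)^{\mu_i}$, checks that the defining relations die because $\alpha(u_{k+1})=\sum_i x_is_ic^{-(N+1)}$, and reads off the bound from $c^{-N}\cdot\bigl(c^{-(N+1)}\bigr)^{D}=c^{-(ND+D+N)}$: the contribution $N$ comes from the coefficient in $M$ and $(N+1)D$ from at most $D$ factors $s_ic^{-(N+1)}$; the degree-$D$ monomials are the whole point, since in the application the images of successive modifications must be multiplied. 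Finally, your justification that the relations involving $x_{k+1}$ ``map to zero since in $T[c^{-1}]$ we may solve for $e_i$'' is false as stated --- only $c$ is inverted in $T[c^{-1}]$, not $x_{k+1}$ --- though this concerns relations that are not in fact part of the definition of $M'$.
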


Now we are ready to prove:

\begin{theorem}
\label{theorem20}
Let $(R,\fm)$ be a $d$-dimensional Noetherian local ring and let $B$ be an $R$-algebra equipped with a sequence $\{c_{n} \in B~|~n \in \mathbb{N}\}$ such that there exists a non-zero divisor $c\in B$ for which $c_n$ is a root of $z^n-c=0$ for all $n$ and $c_m^k=c_n$, whenever $kn=m$. Suppose that $c_{n} \cdot H^{i}_{\fm}(B)=0$ for all $n > 0$ and $i \ne d$. Then $B$ maps to a big Cohen-Macaulay $R$-algebra.
\end{theorem}

\begin{proof}
First, we show that the sequence $\{c_{n} \in B\}_{n \in \mathbb{N}}$ is not nilpotent in $\widehat{B}$. We fix integers $n>0, N>0$. Since the valuation is strictly positive on $\fm B$, we may find sufficiently large $k > 0$ such that $c_{n}^{N} \notin \fm^{k} B$. Hence $c_n$ is not nilpotent in $\widehat{B}$. Now we prove the assertion by contradiction. In view of \cite[Section 8.3]{BH}, there is a bad sequence of algebra modifications of $\widehat{B}$. Keep Theorem \ref{theorem2} in mind.
Under the stated hypothesis, applying Lemma \ref{lemma3} successively, we get the following commutative diagram:
$$
\begin{CD}
\widehat{B}[c^{-1}] @= \widehat{B}[c^{-1}] @= \cdots @= \widehat{B}[c^{-1}] \\
@AAA @AAA @. @AAA \\
\widehat{B} @>>> T_{1} @>>> \cdots @>>> T_{s} \\
\end{CD}
$$
in which we have, as stated in \cite[Theorem 5.2]{Ho02}, that the leftmost vertical arrow is the natural map, the image of each $T_{i}$ is contained in the cyclic module $c^{-N_kt^{-1}} \widehat{B}$ for $0 \le k \le s$ and some integer $N_k>0$. A diagram-chase yields that $1 \in \fm c^{-Nt^{-1}} \widehat{B}$ for arbitrarily large $t > 0$, while $N > 0$ is a fixed integer. Then this is just $c^{Nt^{-1}} \in \fm \widehat{B}$, or $c^N \in \fm^t \widehat{B}$. Thus we have $0 \ne c^{N} \in \bigcap_{t>0} \fm^{t} \widehat{B}=0$, which is a contradiction.
\end{proof}

\section{Almost Cohen-Macaulayness via almost flat extension}

In this section, we assume that $T$ is an $R$-algebra equipped with a value map.

\begin{definition}\label{5}
Let $T$ be an algebra equipped with a value map and let $M$ be a $T$-module. Then $M$ is said to be:

\begin{enumerate}
\item[$\mathrm{(i)}$]
\textit{almost flat}, if $\Tor^{T}_{i}(M,N) \approx 0$ for all $i>0$ and all $T$-modules $N$;
\item[$\mathrm{(ii)}$]
\textit{almost faithfully flat}, if $M$ is almost flat and for each $T$-module $N$,  $M\otimes_{T}N\approx 0$ implies that $N\approx 0$.
\end{enumerate}
\end{definition}

\begin{lemma}
\label{toralm}
Let $M$ be a $T$-module which is almost isomorphic to a flat $T$-module $F$. Then $M$ is almost flat.
\end{lemma}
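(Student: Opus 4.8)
The statement to prove is Lemma~\ref{toralm}: if a $T$-module $M$ is almost isomorphic to a flat $T$-module $F$, then $M$ is almost flat, i.e.\ $\Tor^T_i(M,N)\approx 0$ for all $i>0$ and every $T$-module $N$. By Definition~\ref{iso}(i), being almost isomorphic means there is a $T$-homomorphism in one direction whose kernel and cokernel are both almost zero; since the hypothesis does not specify the direction, the plan is to treat both cases, $f:M\to F$ and $g:F\to M$, separately. The guiding idea is that an almost-zero module contributes almost-zero Tor in all degrees (Lemma~\ref{toralm1}), so breaking the almost isomorphism into a monomorphism and an epimorphism via its image, and applying the long exact sequence of Tor, will let the flatness of $F$ propagate to $M$ up to almost isomorphism.

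\textbf{Key steps.} First I would handle the case $f:M\to F$. Factor $f$ as $M\twoheadrightarrow \im f\hookrightarrow F$, so $K:=\ker f$ and $C:=\coker f=F/\im f$ are almost zero, and also $\im f$ sits in the short exact sequence $0\to K\to M\to \im f\to 0$. Applying $\Tor^T_\bullet(-,N)$ to $0\to \im f\to F\to C\to 0$ and using that $F$ is flat (so $\Tor^T_i(F,N)=0$ for $i>0$) together with Lemma~\ref{toralm1} (so $\Tor^T_i(C,N)\approx 0$ for all $i$), the long exact sequence gives $\Tor^T_i(\im f,N)\approx 0$ for $i\ge 1$ and in fact $\Tor^T_1(\im f,N)$ is a subquotient of the almost zero module $\Tor^T_1(C,N)$ while higher ones vanish outright up to almost isomorphism. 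Then applying $\Tor^T_\bullet(-,N)$ to $0\to K\to M\to \im f\to 0$ and using $\Tor^T_i(K,N)\approx 0$ from Lemma~\ref{toralm1}, the long exact sequence squeezes $\Tor^T_i(M,N)$ between two almost-zero terms, so $\Tor^T_i(M,N)\approx 0$ for all $i>0$. Throughout I would invoke the basic closure properties recorded after the definition of almost zero modules: the class is closed under submodules, quotients, and extensions, which is exactly what is needed to conclude that a module caught in a long exact sequence between almost-zero terms is itself almost zero.

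\textbf{The other direction and the main obstacle.} For the case $g:F\to M$, I would factor $g$ as $F\twoheadrightarrow \im g\hookrightarrow M$ with $\ker g$ and $M/\im g$ almost zero, and argue symmetrically: the sequence $0\to\ker g\to F\to\im g\to 0$ together with flatness of $F$ and Lemma~\ref{toralm1} applied to $\ker g$ yields $\Tor^T_i(\im g,N)\approx 0$ for $i>0$; then $0\to\im g\to M\to M/\im g\to 0$ combined with Lemma~\ref{toralm1} applied to $M/\im g$ yields $\Tor^T_i(M,N)\approx 0$ for $i>0$. The main point requiring care — and the place where the paper's own Remark after Definition~\ref{iso} flags a subtlety — is precisely that an almost isomorphism "cannot be inverted in a naive sense," so one genuinely must run the argument for whichever direction of arrow is given rather than passing to a formal inverse; the two cases are not interchangeable at the level of actual maps. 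A secondary technical point is that one is implicitly using that the value map need not be normalized here, so I would make sure the only facts used about almost zero modules are the purely module-theoretic closure properties (submodules, quotients, extensions, direct limits) and Lemma~\ref{toralm1}, none of which requires normalization. I expect the write-up to be short: two applications of a snake/long-exact-sequence chase, each a couple of lines.
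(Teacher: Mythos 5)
Your proposal is correct and follows essentially the same route as the paper: factor the almost isomorphism through its image, apply the long exact sequence of Tor to the two resulting short exact sequences, and use Lemma~\ref{toralm1} on the kernel and cokernel together with flatness of $F$. The only difference is that you explicitly write out the case $g:F\to M$, which the paper's proof compresses into ``we may assume there is a $T$-homomorphism $f:M\to F$'' (the need to check both directions being flagged only in the remark after Definition~\ref{iso}); your extra care here is sound and arguably an improvement in completeness.
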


\begin{proof} The proof is easy and we leave it to the reader.
\end{proof}

\begin{theorem}\label{almCM}
Let $T$ be an almost Cohen-Macaulay algebra over a local ring $(R,\fm)$ and let $M$ be an almost faithfully flat $T$-module. Then
$H^{d-i}_{\frak m}(M) \approx H^{d-i}_{\frak m}(T) \otimes_T M$. In particular, $M$ is almost Cohen-Macaulay.
\end{theorem}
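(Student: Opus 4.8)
The plan is to compute the local cohomology of $M$ directly from that of $T$ by using the almost faithful flatness of $M$ over $T$, together with the structural description of local cohomology provided in the previous section. First I would recall that, since $T$ is almost Cohen-Macaulay, Proposition~\ref{grothalm} gives $\Tor_n^T(H^d_{\fm}(T),L)\approx H^{d-n}_{\fm}(L)$ for every $T$-module $L$; applying this to $L=M$ yields $H^{d-n}_{\fm}(M)\approx \Tor_n^T(H^d_{\fm}(T),M)$ for all $n\ge 0$. Thus I must show two things: (a) $H^i_{\fm}(M)$ is almost zero for all $i\ne d$, which translates into $\Tor_n^T(H^d_{\fm}(T),M)\approx 0$ for all $n\ge 1$; and (b) $M/\fm M$ is not almost zero.

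For part (a), the key is that $M$ is almost flat over $T$: by Definition~\ref{5}(i), $\Tor^T_i(M,N)\approx 0$ for all $i>0$ and all $T$-modules $N$. Taking $N=H^d_{\fm}(T)$ and noting that $\Tor$ is symmetric, we get $\Tor^T_n(H^d_{\fm}(T),M)\approx 0$ for all $n\ge 1$. Combined with the almost isomorphism above, this gives $H^i_{\fm}(M)\approx 0$ for all $i<d$. For $i>d$ there is nothing to prove, since $H^i_{\fm}(M)=0$ outside the range $0\le i\le d$ (the Grothendieck vanishing theorem applied via the \v{C}ech complex on a length-$d$ system of parameters of $R$, which computes $H^i_{\fm}(M)$ for any $R$-module, hence any $T$-module). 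So the vanishing-below-$d$ half is essentially immediate from almost flatness.

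For part (b), I would use the ``faithful'' half of almost faithful flatness. We want $M/\fm M\not\approx 0$; since $M/\fm M\simeq M\otimes_T (T/\fm T)$, it suffices by Definition~\ref{5}(ii) to know that $T/\fm T$ is not almost zero. But $T$ is almost Cohen-Macaulay over $R$, so by Definition~\ref{cohen} $T/\fm T$ is not almost zero. Hence $M\otimes_T(T/\fm T)\not\approx 0$, i.e. $M/\fm M$ is not almost zero, which is exactly the remaining condition for $M$ to be almost Cohen-Macaulay. This completes the argument.

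The main obstacle I anticipate is purely bookkeeping rather than conceptual: one must be careful that the almost-isomorphism relation $\approx$ behaves well enough to conclude ``$X\approx 0$ and $X\approx Y$ imply $Y\approx 0$.'' As the Remark following Definition~\ref{iso} warns, $\approx$ is not literally an equivalence relation in general, so I would phrase the chain $H^i_{\fm}(M)\approx \Tor_i^T(H^d_{\fm}(T),M)\approx 0$ by invoking the intermediate module $L$ of Definition~\ref{iso}(ii) with maps $L\to H^i_{\fm}(M)$ and $L\to \Tor_i^T(H^d_{\fm}(T),M)$ having almost-zero kernels and cokernels, and then use that a module receiving a map from an almost-zero module with almost-zero cokernel is itself almost zero (the easy exact-sequence fact recorded after the first definition of almost zero). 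Similarly, on the ``faithful'' side I would track that $M\otimes_T(T/\fm T)\approx 0$ being ruled out requires $T/\fm T\not\approx 0$, and not the converse direction, so no inversion of almost isomorphisms is needed there. Beyond this care with $\approx$, the proof is a direct chase through Proposition~\ref{grothalm} and Definition~\ref{5}.
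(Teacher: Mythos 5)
Your proof is correct and follows essentially the same route as the paper's: both arguments rest on the double complex underlying Proposition~\ref{grothalm}, use almost flatness of $M$ to kill the higher $\Tor$'s against $H^d_{\fm}(T)$, and use the faithfulness half on $M\otimes_T(T/\fm T)$ for the non-vanishing. The only (cosmetic) difference is that you quote Proposition~\ref{grothalm} as a black box and then annihilate $\Tor_n^T(H^d_{\fm}(T),M)$ for $n\ge 1$, whereas the paper re-runs the spectral sequence and collapses it along the other direction to get $H^{d-i}_{\fm}(M)\approx H^{d-i}_{\fm}(T)\otimes_T M$; your care with the non-transitivity of $\approx$ is exactly the right bookkeeping.
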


\begin{proof}
Let $\underline{x}=x_1,\ldots,x_d$ be a system of parameters of $R$ and let
$$
\begin{CD}
K_{\bullet}:0 @>>> K_d @>>>\cdots @>>>K_1 @>>> K_0  @>>>0
\\
\end{CD}
$$
be the \v{C}ech complex of $T$ with respect to $\underline{x}$, where $K_j:=\underset{1\leq i_1<\cdots< i_{n-j}\leq d}\bigoplus T_{x_{i_1} \cdots x_{i_{n-j}}}$. Let us take a projective resolution of $M$ over $T$:
$$
\begin{CD}
P_{\bullet}:\cdots @>>> P_n @>>>\cdots @ >>>P_1 @>>> P_0  @>>>0.
\\
\end{CD}
$$
Form the tensor product $P_{\bullet}\otimes_T K_{\bullet}$. By a similar computation as in the proof of Proposition \ref{grothalm}, we have
$$
H_i(\Tot(P_{\bullet}\otimes_T K_{\bullet})) \simeq {^{\textbf{I}}E^2_{0,i}}=H^{d-i}_{\frak m}(M).
$$
Again by Proposition \ref{grothalm}, $$^{\textbf{II}}E^2_{p,q}\simeq \Tor_p^T(H^{d-q}_{\frak m}(T),M),$$ which is almost zero for all $p\neq 0$, because $M$ is almost flat. It follows from Lemma \ref{spec} that
$$
H^{d-i}_{\frak m}(M)\simeq H_{i}(\Tot(P_{\bullet}\otimes_T K_{\bullet})) \approx H^{d-i}_{\frak m}(T) \otimes_T M.
$$
Hence $H^i_{\frak m}(M) \approx 0$ for all $i\neq d$, because $H^{i}_{\frak m}(T) \approx 0$ for all $i\neq d$.

It remains to show that $M/\frak m M$ is not almost zero. For a contradiction, suppose that $M/\frak m M \approx 0$. In view of $M/\frak m M \simeq M\otimes_TT/\frak m T$ and almost faithful flatness of $M$, we have $T/\frak m T \approx 0$, which is a contradiction
\end{proof}

We need the following almost flatness criterion.

\begin{lemma}\label{cri}
Let $T$ be an algebra equipped with a value map and let $M$ be a $T$-module. Then $M$ is almost flat, if $\Tor^{T}_{i}(M,T/I)\approx0$ for every finitely generated ideal $I\subseteq T$ and $i>0$.
\end{lemma}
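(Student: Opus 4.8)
The plan is to bootstrap the hypothesis, which concerns only the cyclic modules $T/I$ with $I$ finitely generated, up to arbitrary $T$-modules, working in a fixed homological degree $i>0$ throughout. The only tools needed are the permanence properties of the class of almost zero modules recorded in Section~3 (closure under submodules, quotients, extensions, and direct limits) together with the fact that $\Tor^{T}_{i}(M,-)$ commutes with direct limits. Recall also that $X\approx 0$ simply means that $X$ is almost zero, so it suffices to show $\Tor^{T}_{i}(M,N)$ is almost zero for all $i>0$ and all $N$.

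First I would pass from finitely generated ideals to arbitrary ideals. Given an ideal $I\subseteq T$, write it as the directed union $I=\varinjlim_{\alpha} I_{\alpha}$ of its finitely generated subideals, so that $T/I\cong\varinjlim_{\alpha} T/I_{\alpha}$. Then
$$
\Tor^{T}_{i}(M,T/I)\cong\varinjlim_{\alpha}\Tor^{T}_{i}(M,T/I_{\alpha}),
$$
which is a direct limit of almost zero modules by hypothesis, hence almost zero. In particular $\Tor^{T}_{i}(M,C)\approx 0$ for every cyclic $T$-module $C$ and every $i>0$.

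Next I would treat finitely generated modules by induction on the number $n$ of generators, the case $n=1$ being the previous paragraph. If $N$ is generated by $n>1$ elements, let $N'\subseteq N$ be the submodule generated by $n-1$ of them; then $N/N'$ is cyclic, and the short exact sequence $0\to N'\to N\to N/N'\to 0$ induces the exact sequence
$$
\Tor^{T}_{i}(M,N')\lo\Tor^{T}_{i}(M,N)\lo\Tor^{T}_{i}(M,N/N').
$$
By the inductive hypothesis the left term is almost zero, and by the cyclic case the right term is almost zero; thus $\Tor^{T}_{i}(M,N)$ is an extension of a submodule of an almost zero module by a quotient of an almost zero module, and closure of the almost zero class under extensions gives $\Tor^{T}_{i}(M,N)\approx 0$. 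Finally, for an arbitrary $T$-module $N$, writing $N$ as the directed union of its finitely generated submodules and invoking once more that $\Tor^{T}_{i}(M,-)$ commutes with direct limits reduces to the finitely generated case, so $\Tor^{T}_{i}(M,N)\approx 0$ for all $i>0$ and all $N$; that is, $M$ is almost flat.

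The whole argument is essentially routine; the only step that genuinely requires the permanence properties of Section~3 beyond the naive ``subquotient of an almost zero module'' observation is the extension step in the induction on the number of generators, where the long exact sequence only sandwiches $\Tor^{T}_{i}(M,N)$ between two different almost zero modules.
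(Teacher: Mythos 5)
Your proof is correct and follows essentially the same route as the paper's: reduce arbitrary modules to finitely generated ones via direct limits, handle the cyclic case by writing an arbitrary ideal as the directed union of its finitely generated subideals, and induct on the number of generators using the same decomposition $N=N'+Tu$ with cyclic quotient. The only difference is the order of the reductions, which is immaterial.
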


\begin{proof}
We need to show that $\Tor^{T}_{i}(M,N) \approx 0$ for every $T$-module $N$. Since $N$ is the direct limit of finitely generated modules and Tor functor commutes with direct limit, we may assume that $N$ is finitely generated. We prove the lemma by induction on the number of generators of $N$. First, consider the case $N$ is generated by one element. Then $T/J \simeq N$ for some ideal $J \subseteq T$ and $J$ is the direct limit of its finitely generated subideals $\{J_{\gamma}~|~\gamma\in \Gamma\}$. Then we have an isomorphism ${\varinjlim}_{\gamma\in \Gamma}T/J_{\gamma} \simeq T/J$. Again, since Tor functor commutes with direct limit, we get the claim in the case $N$ is generated by one element, because almost zero modules are preserved under direct limits.

Now suppose that $N$ is generated by $k$ elements, where $k\geq2$. Then $N$ can be written as $N'+Tu$ where $N'$ is generated by $k-1$ elements. Consider the short exact sequence:
$$
\begin{CD}
0 @>>> N' @>>>N @>>> Tu/(Tu \cap N')  @>>>0.
\\
\end{CD}
$$
Then since both $N'$ and $Tu/(Tu \cap N')$ are generated by less than $k$ elements, taking Tor exact sequence and using induction hypothesis will complete the proof.
\end{proof}

\begin{corollary}
\label{corcri}
Let $T$ be a coherent ring equipped with a normalized value map and let $M$ be a finitely presented almost flat $T$-module. Then $M$ is projective.
\end{corollary}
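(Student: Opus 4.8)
The plan is to combine the given almost-flatness hypothesis with a standard finiteness argument over coherent rings. Since $M$ is finitely presented and almost flat, I would first invoke Corollary~\ref{corcri}-type reasoning backwards: I need to upgrade ``almost flat'' to honestly flat, then apply the classical fact that a finitely presented flat module over any ring is projective. So the real content is: a finitely presented almost flat module over a coherent ring carrying a normalized value map is actually flat.

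First I would reduce, via Lemma~\ref{cri}, to showing $\Tor^T_1(M, T/I) \approx 0$ implies $\Tor^T_1(M, T/I) = 0$ for every finitely generated ideal $I \subseteq T$. The key point is that when $T$ is coherent and $M$ is finitely presented, $\Tor^T_i(M, T/I)$ is itself a finitely presented $T$-module: one takes a finite free resolution of $M$ of finitely generated modules (possible by coherence, since the syzygies of a finitely presented module over a coherent ring are finitely generated, hence finitely presented by \cite[Theorem 2.2.1]{G}), tensors with the finitely presented module $T/I$, and observes that the homology of a complex of finitely presented modules over a coherent ring is finitely presented (again by the coherence machinery used in Lemma~\ref{AT}(i)). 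Then by Example~\ref{coh}(i)(b), a weakly coherent — in particular finitely presented — almost zero $T$-module is zero, since the value map is normalized. Hence $\Tor^T_i(M, T/I) = 0$ for all $i > 0$ and all finitely generated $I$, which by the usual characterization of flatness (Tor vanishing against cyclic modules $T/I$ with $I$ finitely generated suffices for flatness) shows $M$ is flat.

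Finally, a finitely presented flat module over an arbitrary commutative ring is projective — this is a classical result (e.g.\ via the fact that flat plus finitely presented implies locally free of finite rank, or directly that a finitely presented flat module is a direct summand of a finite free module). Applying this to $M$ finishes the argument.

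The main obstacle I anticipate is the bookkeeping in the middle step: verifying carefully that $\Tor^T_i(M, T/I)$ is finitely presented. One must be a little careful that coherence is used correctly — finitely generated submodules of finitely presented modules over coherent rings are finitely presented, and kernels/cokernels of maps between finitely presented modules over coherent rings are finitely presented — but all of this is already packaged into the proof of Lemma~\ref{AT}(i) and the reference \cite[Theorem 2.2.1]{G}, so it should go through cleanly. After that, passing from ``almost zero and finitely presented'' to ``zero'' via Example~\ref{coh}(i)(b) is immediate, and the classical finitely-presented-flat-implies-projective step is standard.
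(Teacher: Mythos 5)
Your proposal is correct and follows essentially the same route as the paper: reduce to the vanishing of $\Tor_1^T(M,T/I)$ for finitely generated $I$, use coherence of $T$ and finite presentation of $M$ and $T/I$ to see that this Tor module is finitely presented (the paper simply cites \cite[Corollary 2.5.3]{G} where you sketch the resolution argument), kill it via Example~\ref{coh}(i)(b) using the normalized value map, and finish with the classical fact that finitely presented flat modules are projective. No gaps.
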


\begin{proof}
First, we show that $M$ is flat. To show this, it suffices to prove that $\Tor_1^T(T/J,M)=0$ for any finitely generated ideal $J$. By Lemma \ref{cri},  $\Tor_1^T(T/J,M)\approx0$ for any finitely generated ideal $J$.
  Note that $T/J$ is finitely presented and thus, both $M$ and $T/J$ are coherent. Recall from \cite[Corollary 2.5.3]{G} that $\Tor_1^T(T/J,M)$ is finitely presented. In view of Example \ref{coh} (i), we have $\Tor_1^T(T/J,M)=0$, indicating that $M$ is flat. It suffices to recall from \cite[Theorem 2.1.4]{G} that every finitely presented flat module is projective.
\end{proof}

\begin{theorem}
\label{almflat}
Let $S$ be a reduced local ring that is module-finite over a regular local ring $R$ of characteristic $p>0$. Then the minimal perfect $S$-algebra $S_{\infty}$ is an almost flat $R_{\infty}$-module.
\end{theorem}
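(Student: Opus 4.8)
The plan is to reduce almost flatness of $S_\infty$ over $R_\infty$ to a statement about torsion being killed by small-valuation elements, and then to exploit the regularity of $R$ together with the structure of the Frobenius. First I would fix a valuation $v$ on $R_\infty$ extending a valuation on $R$ centered at the maximal ideal (as in the Remark after the definition of value map), and normalize it; since $R$ is a regular local ring of characteristic $p$, $R_\infty = \bigcup_n R^{p^{-n}}$ is a directed union of regular local rings, each module-finite and free over the previous one after a Frobenius twist, so $R_\infty$ is a coherent (indeed ``almost regular'') ring and every $R^{p^{-n}}$ is flat over $R$. By Lemma~\ref{cri} it suffices to show $\Tor^{R_\infty}_i(S_\infty, R_\infty/I) \approx 0$ for every finitely generated ideal $I \subseteq R_\infty$ and $i>0$; and since $\Tor$ commutes with direct limits and $S_\infty = \varinjlim_n S^{p^{-n}}$, $R_\infty = \varinjlim_n R^{p^{-n}}$, it is enough to control $\Tor^{R^{p^{-n}}}_i(S^{p^{-n}}, R^{p^{-n}}/I_n)$ at each finite level and track what happens under the transition maps, which are Frobenius up to isomorphism.

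The key computation is at a finite level: $S$ is reduced and module-finite over the regular ring $R$, so $S$ is a torsion-free $R$-module of finite rank, and $\Tor^R_i(S, R/I)$ for $i \geq 1$ is supported away from the generic point, in fact annihilated by a fixed nonzero element $c \in R$ — one can take $c$ to be a nonzero element of the Jacobian-type ideal, or simply any nonzero element annihilating the (torsion) cokernel of $S$ inside a free $R$-module containing it, since by Auslander--Buchsbaum $S$ has finite projective dimension over $R$ and its higher $\Tor$ against any module is killed by whatever kills the torsion of its syzygies. The crucial point is that this annihilator $c$ is \emph{uniform}: it does not depend on $I$. Now apply Frobenius: extending scalars along $R \hookrightarrow R^{p^{-n}}$ (which is faithfully flat, being a module-finite extension of regular local rings) identifies $\Tor^{R^{p^{-n}}}_i(S^{p^{-n}}, R^{p^{-n}}/J)$ with the $p^{-n}$-th Frobenius twist of the corresponding $\Tor$ over $R$, hence it is annihilated by $c^{p^{-n}}$. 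Passing to the limit, $\Tor^{R_\infty}_i(S_\infty, R_\infty/I)$ is annihilated by $c^{p^{-n}}$ for all $n$, and since $v(c^{p^{-n}}) = p^{-n} v(c) \to 0$, these torsion modules are almost zero. Finally, Lemma~\ref{cri} promotes this to $\Tor^{R_\infty}_i(S_\infty, N) \approx 0$ for every $R_\infty$-module $N$, i.e. $S_\infty$ is almost flat over $R_\infty$.

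The main obstacle I expect is making the Frobenius descent precise: one must check that the transition maps in the directed system $\{\Tor^{R^{p^{-n}}}_i(S^{p^{-n}}, R^{p^{-n}}/I_n)\}_n$ really are compatible with a fixed element $c$ and its roots $c^{p^{-n}}$, rather than producing a new annihilator at each stage whose valuations might not tend to zero. This is where one uses that the extensions $R \to R^{p^{-1}} \to R^{p^{-2}} \to \cdots$ are all obtained from one another by the Frobenius functor (which is exact on $R$-modules because $R$ is regular, by Kunz), so that a single resolution over $R$ and a single annihilator $c$ propagate by applying $F^n$ and taking $p^{-n}$-th roots; one also needs that $R^{p^{-n}}/I$ for an arbitrary finitely generated ideal $I$ of $R_\infty$ is, for $n$ large, extended from a finitely generated ideal of $R^{p^{-m}}$ for some fixed $m$, which follows since $I$ has finitely many generators each lying in some $R^{p^{-m}}$. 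A secondary point requiring care is the reducedness hypothesis: it is exactly what guarantees $S$ embeds in its total ring of fractions $\prod K_i$ and hence is $R$-torsion-free, so that the ``generic'' locus where $S$ is flat over $R$ is all of $\Spec R \setminus V(c)$ for a genuine nonzero $c$; without reducedness the torsion submodule of $S$ could obstruct this.
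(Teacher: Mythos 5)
Your overall route is essentially the paper's: reduce via Lemma~\ref{cri} to modules of the form $R_\infty/JR_\infty$ with $J$ extended from a finite level (indeed from $R$), resolve $S_n$ over $R_n$ by taking $p^{-n}$-th roots of a fixed resolution of $S$ over $R$, use flatness of Frobenius (Kunz) and the Peskine--Szpiro functor to descend one annihilator $c$ of the higher Tor's to $c^{p^{-n}}$ at level $n$, and let the valuations tend to zero. The genuine gap is in how you produce $c$: you claim that reducedness of $S$ makes $S$ a torsion-free $R$-module, so that $c$ may be taken to kill the cokernel of an embedding of $S$ into a finite free $R$-module, uniformly in the ideal. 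This inference is false under the stated hypotheses. A reduced local module-finite algebra over a regular local ring can have a minimal prime contracting to a nonzero prime of $R$, and hence $R$-torsion: take $R=k[[x,y]]$ and $S=\{(f,g)\in k[[x,y]]\times k[[y]]\ \mid\ f(0,0)=g(0)\}$, the fiber product of $R$ and $R/(x)$ over the residue field. Then $S$ is reduced, local, and module-finite over $R$ (it is an $R$-submodule of $R\oplus R/(x)$), yet $x\cdot(0,y)=(0,0)$, so $S$ is not torsion-free; one minimal prime contracts to $(x)\neq 0$. Thus the embedding of $S$ into a free module with torsion cokernel need not exist, and the ``Jacobian-type'' choice of $c$ is also unavailable since no generic \'etaleness is assumed. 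The paper's Remark~\ref{simple}(ii) singles out the torsion-free case precisely because it is an extra hypothesis that simplifies matters, not a consequence of reducedness; reducedness is there so that the minimal perfect closure $S_\infty$ makes sense.

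The repair is exactly what the paper does, and it only costs the uniformity you advertise but never actually need: by generic freeness over the Noetherian domain $R$ there is $c\neq 0$ with $S_c$ free over $R_c$; since $J$ is fixed before anything else (Lemma~\ref{cri} allows this), the finitely generated modules $\Tor^R_i(S,R/J)$ for $1\le i\le \pd_R(S)$ are killed by a single power $c^N$, with $N$ allowed to depend on $J$. Renaming $c^N$ as $c$, the Frobenius descent you outline gives $c^{p^{-n}}\cdot\Tor^{R_n}_i(S_n,R_n/JR_n)=0$ for all $n$, and $v(c^{p^{-n}})\to 0$ is all that almost vanishing of the limit requires. Two smaller corrections: $R\to R^{p^{-n}}$ is in general not module-finite (no $F$-finiteness is assumed), so its (faithful) flatness must be justified by Kunz's theorem, as you do elsewhere and as the paper cites, not by its being ``a module-finite extension of regular local rings''; and the descent of the annihilator really does require the explicit identification of the level-$n$ resolution with the Frobenius twist of the level-$0$ one (the paper's displays $(6.6.0)$--$(6.6.3)$), which is the part of your sketch that is asserted rather than proved, though your plan for it is the correct one.
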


\begin{proof}
By a result of Kunz \cite[Corollary 8.2.8]{BH}, $R_m:=R^{p^{-m}}$ is flat over $R_n:=R^{p^{-n}}$ for all $n < m$. By Lemma \ref{cri}, it suffices to consider the case when $N:=R_{\infty}/JR_{\infty}$ for an ideal $J$ of $R_{n}$ for some $n$ and thus, we may assume $J \subseteq R$ for simplicity, and then $N \simeq R/J \otimes_{R} R_{\infty}$. We prove the theorem by constructing a projective resolution of an $R_{n}$-module $S_{n}$ via the Frobenius map. Let
$$
\begin{CD}
P^{S}_{\bullet}:0 @>>> R^{\oplus {m_s}}@>{\varphi_s}>>\cdots @ >{\varphi_0}>>
R^{\oplus {m_0}} @>>> S  @>>>0\\
\end{CD}
$$
be a projective resolution of the $R$-module $S$. Let $F^{(n)}_{R_n}:R_{n} \to R$ (resp. $F^{(n)}_{S_n}:S_{n} \to S$) denote the $n$-th iterates of the Frobenius map, respectively \cite[Section 8.2]{BH}. Then the projective resolution of $S_{n}$ is given by
$$
\begin{CD}
P^{S_n}_{\bullet}:0 @>>> R_n^{\oplus {m_s}}@>{\varphi^{-n}_{s}}>>\cdots @>{\varphi^{-n}_{0}}>>R_n^{\oplus {m_0}} @>>> S_n  @>>>0
\\
\end{CD}
$$
where each horizontal map is given by $\varphi^{-n}_{k}=(a^{p^{-n}}_{ij,k})$ with $\varphi_{k}=(a_{ij,k})$ and $a_{ij,k} \in R$. By flatness of $R_{n}$ over $R$, the homology of the complex $(P^{S}_{\bullet} \otimes_{R} R_{n})\otimes_{R_n} R_{n}/JR_{n}$ is
$$
\Tor^{R_n}_{i}(S \otimes_{R} R_{n}, R_{n}/JR_{n}) \simeq
\Tor^{R_n}_{i}(S \otimes_{R} R_{n}, R/J \otimes_{R} R_{n}) \simeq
\Tor^{R}_{i}(S, R/J) \otimes_{R} R_{n} \ \ (4.6.0).
$$
Let $\textbf{F}_{R_n}$ denote the $\textit{Peskine-Szpiro}$ functor with respect
to $R_{n}$. Then since $R_{n}$ is regular, $\textbf{F}_{R_n}$ is faithfully exact. Denote this property by $(4.6.1)$. There is an isomorphism of complexes:
$$
\textbf{F}^{(n)}_{R_n}(P^{S_n}_{\bullet}) \simeq (P^{S}_{\bullet} \otimes_{R}R_{n}) \ \ (4.6.2),
$$
together with an isomorphism of complexes:
$$
\textbf{F}^{(n)}_{R_n}(P^{S_n}_{\bullet} \otimes_{R_n} R_{n}/JR_{n}) \simeq \textbf{F}^{(n)}_{R_n}(P^{S_n}_{\bullet}) \otimes_{R_n} R_{n}/JR_{n} \  \ (4.6.3).
$$
Let $c \in R$ be a non-zero element. Then it follows from $(4.6.0)$ that:
$$
c \cdot \Ker(\varphi_{i} \otimes_{R_n} \mathrm{id}_{R_{n}/JR_{n}}) \subseteq \im(\varphi_{i+1} \otimes_{R_n} \mathrm{id}_{R_{n}/JR_{n}})
$$

$$
\stackrel{4.6.2} \iff c \cdot \Ker(\textbf{F}^{(n)}_{R_n}(\varphi^{-n}_{i}) \otimes_{R_n} \mathrm{id}_{R_{n}/JR_{n}}) \subseteq \im(\textbf{F}^{(n)}_{R_n}(\varphi^{-n}_{i+1}) \otimes_{R_n} \mathrm{id}_{R_{n}/JR_{n}})
$$

$$
\stackrel{4.6.3}\iff c \cdot \Ker(\textbf{F}^{(n)}_{R_n}(\varphi^{-n}_{i} \otimes_{R_n} \mathrm{id}_{R_{n}/JR_{n}})) \subseteq \im(\textbf{F}^{(n)}_{R_n}(\varphi^{-n}_{i+1} \otimes_{R_n} \mathrm{id}_{R_{n}/JR_{n}}))
$$

$$
\stackrel{4.6.1}\iff \textbf{F}^{(n)}_{R_n}(c^{p^{-n}} \cdot \Ker(\varphi^{-n}_{i} \otimes_{R_n} \mathrm{id}_{R_{n}/JR_{n}})) \subseteq \textbf{F}^{(n)}_{R_n}(\im(\varphi^{-n}_{i+1} \otimes_{R_n} \mathrm{id}_{R_{n}/JR_{n}}))
$$

$$
\stackrel{4.6.1}\iff c^{p^{-n}} \cdot \Ker(\varphi^{-n}_{i} \otimes_{R_n} \mathrm{id}_{R_{n}/JR_{n}}) \subseteq \im(\varphi^{-n}_{i+1} \otimes_{R_n} \mathrm{id}_{R_{n}/JR_{n}}).
$$
This implies that if $c \cdot \Tor^{R}_{i}(S, R/J)=0$, then $c^{p^{-n}} \cdot \Tor^{R_{n}}_{i}(S_{n}, R_{n}/JR_{n})=0$. So it is sufficient to find such an element. By generic flatness, there exists $c \in R$ such that $S_{c}$ is free over $R_{c}$. Hence we have $\Tor^{R_c}_{i}(S_{c},(R/J)_{c})=0$ for $i \ge 1$, which implies that there is some power $c^{N}$ such that $c^{N} \cdot \Tor^{R}_{i}(S,R/J)=0$. Replacing $c^{N}$ by $c$, we have $c^{p^{-n}} \cdot \Tor^{R_{n}}_{i}(S_{n},R_{n}/JR_{n})=0$ for all $n \ge 0$. Taking direct limit, we deduce that
$$
c^{p^{-n}} \cdot \Tor^{R_{\infty}}_{i}(S_{\infty},N)=0
$$
for all $n \ge 0$ and $i \ne 0$.
\end{proof}

If $T$ is finite \'{e}tale over $S$, then one can show that $T_{\infty}$ is finite \'{e}tale over $S_{\infty}$. In fact, the natural map $S_n\otimes_{S} T \to T_n$ is an isomorphism for all $n \ge 0$. Taking direct limit, we get the claim.\footnote{In particular, if one takes $S=R$, then $T$ is even regular. In this case, it suffices to assume that $S \to T$ is only unramified.}

\begin{remark} \label{AB}
(i): There is a version of Auslander-Buchsbaum formula for ring homomorphisms, due to Schoutens \cite[Theorem 1.2]{Sch}. For a local homomorphism $R\longrightarrow S$ of local Noetherian rings, we have an equality
$$
\fd_R(S) + \Edepth_R(S) = \Edepth_R(R),
$$
provided that $S$ has finite projective dimension over $R$.

(ii): There is a version of Auslander-Buchsbaum formula for non-Noetherian rings, due to Northcott \cite[Chapter 6, Theorem 2]{N}. Let $(A,\fm)$ be a quasilocal ring and let $M$ be an $A$-module admitting a resolution of finite length consisting of finitely generated free modules in each degree. Then
$$
\pd_A(M) +\Kdepth_A(M) = \Kdepth_A(A).
$$
Note that polynomial grade coincides with Koszul grade (see \cite[Proposition 2.3]{AT}).
\end{remark}

Let $R$ be a Noetherian domain. A ring homomorphism $R \to S$ is \textit{generically \'etale}, if its generic fiber is \'etale over $\mathrm{Frac}(R)$, that is to say, $S \otimes_R \mathrm{Frac}(R)$ is a finite product of finite separable extension fields of $\mathrm{Frac}(R)$.

\begin{corollary}
Assume that $S$ is a reduced local algebra which is generically \'etale over a regular local ring $R$ of characteristic $p>0$. Then $S_{\infty}$ is module-finite over $R_{\infty}$ if and only if $R \to S$ is \'etale.
\end{corollary}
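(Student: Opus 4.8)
The plan is to pass to perfect closures, show that $S_\infty$ is a finite \'etale $R_\infty$-algebra, and then spread this out to a finite level. I would first record the structure of $R_\infty=\varinjlim_n R_n$: by Kunz's theorem each transition map $R_n\to R_{n+1}$ is flat, and a filtered colimit of coherent rings along flat ring homomorphisms is coherent, so $R_\infty$ is a coherent, quasilocal, perfect domain; moreover a valuation on $R$ centered at $\fm$ extends along the integral extension $R\to R_\infty$ to a normalized value map. Theorem~\ref{almflat}, applied to the module-finite extension $R\to S$, shows that $S_\infty$ is an almost flat $R_\infty$-module. Granting that $S_\infty$ is finitely presented over $R_\infty$ (which I would extract from its being module-finite over $R_\infty$), Corollary~\ref{corcri} then gives that $S_\infty$ is a projective, hence free, $R_\infty$-module, say of rank $r$; and generic \'etaleness of $S$ over $R$ identifies $r$ with the generic rank $\dim_{\operatorname{Frac}(R)}(S\otimes_R\operatorname{Frac}(R))$, since the generic fibre of $S_\infty/R_\infty$ is the perfection of the generic fibre of $S/R$, and perfection does not drop dimension on a finite separable algebra over a field.

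Next I would check that $R_\infty\to S_\infty$ is unramified, hence finite \'etale. For a prime $\fq$ of $R_\infty$, the extended ideal $\fq S_\infty$ is radical: if $f^p\in\fq S_\infty$, writing the elements of a representation $f^p=\sum a_is_i$ with $a_i\in\fq$ as $p$-th powers and using injectivity of Frobenius on the reduced ring $S_\infty$ shows $f\in\fq S_\infty$. Hence each fibre $S_\infty\otimes_{R_\infty}\kappa(\fq)$ is a reduced finite algebra over the perfect field $\kappa(\fq)$, i.e. a finite product of finite separable field extensions, so every fibre of $\Spec S_\infty\to\Spec R_\infty$ is \'etale. Combined with flatness and finiteness, $R_\infty\to S_\infty$ is finite \'etale. (Equivalently, $\Omega_{S_\infty/\mathbb{F}_p}=0=\Omega_{R_\infty/\mathbb{F}_p}$ forces $\Omega_{S_\infty/R_\infty}=0$.)

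Finally I would spread out. Fix a free $R_\infty$-basis $\omega_1,\dots,\omega_r$ of $S_\infty$ and choose $n_0\gg 0$ so that $\omega_i\in S_{n_0}$ and the multiplication constants $a^k_{ij}$ (defined by $\omega_i\omega_j=\sum_k a^k_{ij}\omega_k$) lie in $R_{n_0}$. These data define a finite free $R_{n_0}$-algebra $S'$, an identification $S'\otimes_{R_{n_0}}R_\infty\cong S_\infty$, and an $R_{n_0}$-algebra map $S'\to S_{n_0}$ sending $\omega_i\mapsto\omega_i$. The discriminant $\det(\operatorname{Tr}_{S'/R_{n_0}}(\omega_i\omega_j))\in R_{n_0}$ maps to the discriminant of $S_\infty/R_\infty$, which is a unit because $S_\infty/R_\infty$ is \'etale; since $R_{n_0}\to R_\infty$ is a local homomorphism, it is already a unit, so $S'$ is finite \'etale over $R_{n_0}$. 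Base changing $S'\to S_{n_0}$ along the faithfully flat map $R_{n_0}\to R_\infty$ yields an isomorphism — one checks this first over $\operatorname{Frac}(R_\infty)$ using generic \'etaleness and then in general, both sides being torsion-free over the domain $R_\infty$ — whence $S'\to S_{n_0}$ is itself an isomorphism and $R_{n_0}\to S_{n_0}$ is finite \'etale. Applying the base-change identity recorded just before the corollary to the finite \'etale map $R_{n_0}\to S_{n_0}$ gives $S_n\cong R_n\otimes_{R_{n_0}}S_{n_0}$ for all $n\ge n_0$, which is \'etale over $R_n$; this proves the claim.

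The step I expect to be the real obstacle is establishing honest (as opposed to almost) flatness of $S_\infty$ over $R_\infty$ — concretely, verifying that $S_\infty$ is a finitely presented $R_\infty$-module, which is what is needed to invoke Corollary~\ref{corcri}. This is precisely where the hypotheses ``$S_\infty$ module-finite over $R_\infty$'' and ``$S$ generically \'etale'' must be used in an essential way: for a purely inseparable $S$ the perfection $S_\infty$ destroys the ramification data, and both the flatness input and the descent step above would then fail. A secondary technicality is the generic identification $S'\cong S_{n_0}$ used in the descent.
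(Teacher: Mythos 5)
Your overall strategy (prove $R_\infty\to S_\infty$ is finite \'etale, then spread out to a finite level) is workable in outline, but it hinges on a step you yourself flag and do not supply: honest flatness of $S_\infty$ over $R_\infty$. To invoke Corollary~\ref{corcri} you need $S_\infty$ to be a \emph{finitely presented} $R_\infty$-module, and ``module-finite'' does not yield this over a non-Noetherian base: $S_\infty$ is a quotient of some $R_\infty^{\oplus m}$, but nothing in your argument bounds the module of relations. (This can be rescued when $S$ is torsion-free over $R$, by embedding $S_\infty$ into a finite free $R_\infty$-module as in the proof of Corollary~\ref{flat} and using that finitely generated submodules of finitely presented modules over the coherent ring $R_\infty$ are finitely presented --- but you neither assume torsion-freeness nor carry this out.) Your spreading-out step also quietly relies on the isomorphism $S_{n_0}\otimes_{R_{n_0}}R_\infty\simeq S_\infty$, which is precisely where generic \'etaleness must do its work; your ``check generically, then use torsion-freeness'' sketch needs, in addition, the normality of the finite \'etale algebra $S'$ over the regular ring $R_{n_0}$ to upgrade a generic isomorphism to a global one.

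The paper sidesteps the flatness-at-infinity question entirely, and this is the idea your proposal is missing. Since $R_\infty$ and $S_\infty$ are perfect, compatibility of the canonical derivation with Frobenius forces $\Omega_{S_\infty/R_\infty}=0$ with no flatness input at all. Generic \'etaleness plus reducedness give, via Maclane's criterion, that $S_n$ and $R_\infty$ are linearly disjoint over $R$, so $S_n\otimes_{R_n}R_\infty\simeq R_\infty[S_n]$; module-finiteness of $S_\infty$ over $R_\infty$ makes this equal to $S_\infty$ for $n\gg0$, and faithfully flat descent along $R_n\to R_\infty$ then yields $\Omega_{S_n/R_n}=0$, i.e.\ $R_n\to S_n$ is unramified. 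Flatness is then obtained at the \emph{finite} level for free: an unramified local extension of the regular local ring $R_n$ forces $S_n$ to be regular, and the Auslander--Buchsbaum formula gives flatness of $S_n$ over $R_n$, hence \'etaleness. In short: prove unramifiedness first and deduce flatness from regularity, rather than trying to manufacture flatness of $S_\infty$ out of the almost-flatness machinery.
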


\begin{proof}
Assume that $R_{\infty} \to S_{\infty}$ is module-finite. Note that $R_{m}$ is flat over $R_n$ for $m \ge n$ and $R_{\infty}$ and $S_{\infty}$ are perfect algebras. Then there is the commutative diagram induced by the Frobenius map on $S_{\infty}$:
$$
\begin{CD}
S_{\infty} @>\sim>F_{S_{\infty}}> S_{\infty} \\
@ VdVV @ VdVV \\
\Omega_{S_{\infty}/R_{\infty}} @>\sim>> \Omega_{S_{\infty}/R_{\infty}}, \\
\end{CD}
$$
where $d:S_{\infty} \to \Omega_{S_{\infty}/R_{\infty}} $ is the canonical derivation. Then $\Omega_{S_{\infty}/R_{\infty}}=0$ by a diagram-chase and $S_{n}$ is generically \'etale over $R_n$ by assumption. By injecting $S_n$ into its total ring of fractions and by Maclane's criterion, $S_n$ and $R_{\infty}$ are linearly disjoint over $R$ for all $n \ge 0$ (see \cite[Example 20.13]{M}). Thus, $S_n \otimes_{R_n} R_{\infty} \simeq R_{\infty}[S_n]\subseteq S_{\infty}$. Since $R_{\infty} \to S_{\infty}$ is module-finite, we have $S_n \otimes_{R_n} R_{\infty} \simeq R_{\infty}[S_n]= S_{\infty}$ for $n \gg 0$. Then
$$
\Omega_{S_{\infty}/R_{\infty}} \simeq \Omega_{S_{n} \otimes_{R_n} R_{\infty}/R_{\infty}} \simeq  \Omega_{S_{n}/R_{n}} \otimes_{R_n} R_{\infty}=0
$$
for $n \gg 0$. So we have $\Omega_{S_{n}/R_{n}}=0$ by faithful flatness of $R_{\infty}$ over $R_n$. Thus, $\fm_{R_{n}}S_{n}=\fm_{S_{n}}$ (both $R_n$ and $S_n$ are local) for $n \gg 0$. Since $R_{n}$ is regular, $S_{n}$ is so. By Auslander-Buchsbaum formula, $S_{n}$ is flat over $R_{n}$ for $n \gg 0$ and so $R_n \to S_n$ is \'etale. Finally, $R \to S$ is isomorphic to $R_n \to S_n$ via the $n$-th iterates of the Frobenius map, $R \to S$ is \'etale. The converse is obvious.
\end{proof}

\begin{corollary}\label{flat}
Let $R$ be a complete local domain of characteristic $p>0$. Then $R^+$ is almost flat over $R_{\infty}$.
\end{corollary}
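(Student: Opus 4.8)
The plan is to reduce to a regular base, where Theorem~\ref{almflat} applies, and then to transfer almost flatness up from the perfect closure of that base to $R_\infty$ using that the relevant extension is almost \'etale. First I would invoke the Cohen structure theorem to write $R$ as a module-finite extension of a complete regular local subring $A\cong k[[x_1,\dots,x_d]]$; since $R\subseteq R^+$ is integral, $R^+=A^+$, and I fix a valuation $v$ on $R^+$ centered at $\fm$ (extending one on $A$; cf.\ the remark after Proposition~\ref{separable}) and use its restrictions throughout. Writing $R^+=\varinjlim_S S$ over the module-finite extension domains $S$ of $R$ inside $R^+$, and noting that $R^+$ is perfect so that also $R^+=\varinjlim_S S_\infty$, each such $S$ is a reduced local ring module-finite over the regular local ring $A$; hence Theorem~\ref{almflat} gives that $S_\infty$ is almost flat over $A_\infty$, and since $\Tor$ commutes with filtered colimits and almost zero modules are closed under them, $R^+$ is almost flat over $A_\infty$. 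Taking the integral closure $A'$ of $A$ in the separable closure of $\mathrm{Frac}(A)$ inside $\mathrm{Frac}(R)$ yields a domain $A'$, module-finite over $A$, with $\mathrm{Frac}(A')/\mathrm{Frac}(A)$ separable and $R^{p^n}\subseteq A'$ for $n\gg0$, so $R_\infty=A'_\infty$ (up to routine normalization bookkeeping); applying Theorem~\ref{almflat} once more, now to $S=A'$, shows that $R_\infty$ is almost flat over $A_\infty$.

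The heart of the argument is that $A_\infty\to R_\infty$ is almost \'etale. On one hand $\Omega_{R_\infty/A_\infty}=0$, because every element of $R_\infty$ is a $p$-th power, so $d\equiv0$ on $R_\infty$. On the other hand, since $\mathrm{Frac}(A')/\mathrm{Frac}(A)$ is separable there is $0\ne c\in A$ with $A_c\to A'_c$ finite \'etale, whence, by compatibility of Frobenius with \'etale maps, $R_\infty[c^{-1}]=A'_\infty[c^{-1}]\cong A'_c\otimes_{A_c}A_\infty[c^{-1}]$ is finite \'etale over $A_\infty[c^{-1}]$ and carries a diagonal idempotent $e_c\in(R_\infty\otimes_{A_\infty}R_\infty)[c^{-1}]$ with $\mu(e_c)=1$, where $\mu$ is multiplication. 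Clearing denominators gives $c^ke_c\in R_\infty\otimes_{A_\infty}R_\infty$ for some $k$, and since this ring is perfect and $e_c$ is idempotent, in fact $e_n:=c^{k/p^n}e_c\in R_\infty\otimes_{A_\infty}R_\infty$ for every $n$. These are almost idempotents: $\mu(e_n)=c^{k/p^n}$ has valuation $(k/p^n)\,v(c)\to0$, and a short computation in $R_\infty\otimes_{A_\infty}R_\infty$ shows that $e_n$ kills $\ker\mu$ up to an almost zero submodule. Hence, for any $R_\infty$-module $N$, the $e_n$ define maps $s_n:N\to R_\infty\otimes_{A_\infty}N$ that are $R_\infty$-linear up to an almost zero defect, with $\mu_N\circ s_n=c^{k/p^n}\cdot\mathrm{id}_N$.

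To finish, fix an $R_\infty$-module $N$. The change-of-rings spectral sequence
$$
\Tor^{R_\infty}_p\big(R^+,\Tor^{A_\infty}_q(R_\infty,N)\big)\underset{p}{\Rightarrow}\Tor^{A_\infty}_{p+q}(R^+,N),
$$
together with the almost flatness of $R_\infty$ over $A_\infty$ (which makes every row $q>0$ almost zero, by Lemma~\ref{toralm1}) and Lemma~\ref{spec}, gives $\Tor^{R_\infty}_i(R^+,R_\infty\otimes_{A_\infty}N)\approx\Tor^{A_\infty}_i(R^+,N)$, and the latter is almost zero for $i>0$ since $R^+$ is almost flat over $A_\infty$. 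Applying $\Tor^{R_\infty}_i(R^+,-)$ to the factorization $\mu_N\circ s_n=c^{k/p^n}\cdot\mathrm{id}_N$ in the localized (almost) category then shows that $c^{k/p^n}$ annihilates $\Tor^{R_\infty}_i(R^+,N)$ modulo an almost zero submodule for every $n$; as $v(c^{k/p^n})\to0$, this forces $\Tor^{R_\infty}_i(R^+,N)\approx0$, i.e.\ $R^+$ is almost flat over $R_\infty$. The hard part, to my mind, is the middle paragraph -- pinning down that $A_\infty\to R_\infty$ is almost \'etale, which hinges on the reduction to the generically separable case (with the attendant normalization bookkeeping) and on producing the almost idempotents, and which is where the almost-purity content of the statement is concentrated; the two $\Tor$-computations are formal and run exactly as in the proof of Proposition~\ref{grothalm}.
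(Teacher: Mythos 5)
Your opening and closing moves are sound: writing $R^+=\varinjlim_S S_\infty$ over the module-finite extensions $R\subseteq S\subseteq R^+$ and applying Theorem~\ref{almflat} to each $S$ does give almost flatness of $R^+$ over $A_\infty$ (a reasonable substitute for the paper's appeal to \cite{HH} and \cite{AH}, which yields honest flatness), and the change-of-rings spectral sequence is degenerated exactly as in the paper. The gap is in the middle paragraph, which you yourself identify as the heart. First, the assertion $R_\infty=A'_\infty$ is false in general: take $k\subseteq k'$ a nontrivial finite separable field extension and $R=k+xk'[[x]]$, a complete local domain module-finite over $A=k[[x]]$; then $A'=k'[[x]]$, while for $u\in k'\setminus k$ one has $u^{p^n}\notin R$ for every $n$ (its constant term $u^{p^n}$ is separable over $k$ and not in $k$), so $u\in A'_\infty\setminus R_\infty$. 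What survives is only the localized statement $R_\infty[c^{-1}]=A'_\infty[c^{-1}]$ for a suitable $c$ (one must also arrange that $R_c$ be normal, i.e.\ put $c$ in the conductor), and this is not ``routine bookkeeping'' but part of the argument, since your idempotent lives over that localization. Second, even granting $e_c$, the points you compress are exactly where the work lies: that $R_\infty\otimes_{A_\infty}R_\infty$ is perfect, that its $c$-power torsion is almost zero so that $e_n$ almost kills $\ker\mu$, and, above all, the final step in which $\Tor^{R_\infty}_i(R^+,-)$ is applied to maps $s_n$ that are only ``$R_\infty$-linear up to an almost zero defect'' ``in the localized (almost) category''. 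The paper deliberately works with honest modules and maps and has no such category at its disposal (see the remark following Definition~\ref{iso}); making this step precise requires either the Gabber--Ramero formalism or an explicit hands-on replacement, neither of which you supply. In effect you are invoking the characteristic-$p$ almost-purity statement that $A_\infty\to R_\infty$ is almost \'etale, and the proof of that statement is what is missing.

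The paper shows that this heavy input is not needed. It applies Theorem~\ref{almflat} directly to $R$ over $A$ (no detour through $A'$), degenerates the same spectral sequence via Lemmas~\ref{toralm1} and~\ref{spec} to get $\Tor_n^{A_\infty}(R^+,M)\approx\Tor_n^{R_\infty}(R^+,R_\infty\otimes_{A_\infty}M)$, and then replaces your almost-\'etale descent by the much cheaper structural fact that $R$ contains a free $A$-module $N\simeq A^{\oplus\ell}$ with $cR\subseteq N$; applying Frobenius gives $J\cdot R_\infty\subseteq N_\infty\simeq A_\infty^{\oplus\ell}$ with $J=\bigcup_{n>0}c^{p^{-n}}A_\infty$, so $R_\infty\otimes_{A_\infty}M$ is almost isomorphic to $M^{\oplus\ell}$ and hence $\Tor_n^{A_\infty}(R^+,M)\approx\Tor_n^{R_\infty}(R^+,M)^{\oplus\ell}$. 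Combined with flatness of $R^+$ over $A_\infty$ (from \cite{HH} via \cite{AH}), this finishes the proof with no idempotents, no generic separability, and no almost-linearity issues. Your architecture could be completed along Faltings' lines, but as written the central step is unproved and one of its intermediate claims is false as stated.
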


\begin{proof}
By Cohen's structure theorem \cite[Theorem A.22]{BH}, there is a regular local ring $(A,\fm)$ such that $R$ is module-finite over $A$. Let $M$ be an $R_{\infty}$-module. Take the following spectral sequence \cite[Theorem 11.62]{Rot}
$$
E^2_{p,q}=\Tor_p^{R_{\infty}}(R^+,\Tor_q^{A_{\infty}}(R_{\infty},M))\underset{p}\Rightarrow \Tor_{n}^{A_{\infty}}(R^+,M).
$$
In view of Theorem \ref{almflat} and Lemma \ref{spec}, we have
$$
\Tor_{n}^{A_{\infty}}(R^+,M)\approx\Tor_{n}^{R_{\infty}}(R^+,R_{\infty}\otimes_{A_{\infty}}M)
$$
as $A_{\infty}$-modules. There is a submodule $N \subseteq R$ such that it is $A$-free of finite rank $\ell$ and the quotient $R/N$ is $A$-torsion. Then $c \cdot R \subseteq N$ for some $0 \ne c \in A$. By applying the Frobenius, we see that $J \cdot R_{\infty} \subseteq N_{\infty}$ for all $n>0$ and $N_{\infty} \simeq A_{\infty}^{\oplus \ell}$, where $J:=\bigcup_{n>0} c^{p^{-n}}A_{\infty}$. Take the short exact sequence:
$$
\begin{CD}
0 @>>> N_{\infty} @>>>R_{\infty} @>>> R_{\infty}/N_{\infty}@>>>0,
\end{CD}
$$
together with the induced exact sequence:
$$
\begin{CD}
\Tor^{A_{\infty}}_{1}(R_{\infty}/N_{\infty},M) @>>>N_{\infty}\otimes_{A_{\infty}}M @>{f}>> R_{\infty}\otimes_{A_{\infty}}M @>>> (R_{\infty}/N_{\infty})\otimes_{A_{\infty}}M.
\end{CD}
$$
Then $f$ is an almost isomorphism in view of Lemma \ref{toralm1} and $J \cdot R_{\infty} \subseteq N_{\infty}$, and thus,
$$
\Tor_{n}^{R_{\infty}}(R^+,R_{\infty}\otimes_{A_{\infty}}M)\approx \Tor_{n}^{R_{\infty}}(R^+,M)^{\oplus \ell}
$$
and $\Tor_{n}^{A_{\infty}}(R^+,M)\approx\Tor_{n}^{R_{\infty}}(R^+,M)^{\oplus \ell}$ as $A_{\infty}$-modules. Thus,
$$
\Tor_{n}^{A_{\infty}}(R^+,M)\approx 0\Longleftrightarrow \Tor_{n}^{R_{\infty}}(R^+,M)\approx 0.
$$
It suffices to show that $R^+$ is flat over $A_{\infty}$. For $n>0$, $A_n:=\{x\in A_\infty~|~x^{p^n}\in A\}$ is a regular local ring. By Lemma \ref{cri}, it suffices to show that $\Tor_i^{A_{\infty}}(A_{\infty}/JA_{\infty},R^+)=0$ for a finitely generated ideal $J \subseteq A_{\infty}$. Without loss of generality, we may assume that $J \subseteq A$. Then it suffices to note that $\Tor_i^{A_{\infty}}(A_{\infty}/JA_{\infty},R^+)$ is the direct limit of all $\Tor_i^{A_n}(A_n/JA_n,R^+)$ for all $i \ge 0$ by \cite[Remark 3.2]{AH} and $(A_n)^+$ is flat over $A_n$ by
\cite[6.7, Flatness]{HH}.
\end{proof}

The following result is an immediate application of Theorem \ref{almCM} and Theorem \ref{almflat}.

\begin{corollary}
\label{cor}
Let $S$ be a reduced local ring that is module-finite over a regular local ring $R$. Then $S_{\infty}$ is an almost Cohen-Macaulay $R$-algebra.
\end{corollary}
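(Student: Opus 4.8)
\emph{Proof strategy.} We work in characteristic $p>0$, as is needed for the perfect closure $S_\infty$ to be defined, and we equip $R$ with a normalized value map (say the $\fm$-adic order valuation, which is multiplicative since the associated graded ring of the regular local ring $R$ is a polynomial ring), extend it to $R^+$, and thereby to the subrings $R_\infty\subseteq S_\infty$ (when $S$ is a domain; in general one restricts to the components of the total ring of fractions of $S$), so that all the algebras involved carry compatible normalized value maps. The plan is to apply Theorem~\ref{almCM} with $T:=R_\infty$ and $M:=S_\infty$. This requires two verifications: that $R_\infty$ is almost Cohen-Macaulay over $R$, and that $S_\infty$ is an almost faithfully flat $R_\infty$-module.

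The first verification is easy: $R_\infty$ is actually a balanced big Cohen-Macaulay $R$-algebra. By Kunz's theorem \cite[Corollary 8.2.8]{BH} each $R^{p^{-n}}$ is flat over $R$, hence $R_\infty=\varinjlim_n R^{p^{-n}}$ is flat, and it is faithfully flat since $\fm R_\infty\neq R_\infty$. Since $R$ is regular, every system of parameters $\underline{x}$ of $R$ is $R$-regular, and flat base change keeps it $R_\infty$-regular (note $R_\infty/\underline{x}R_\infty\neq 0$); flatness also gives $H^i_\fm(R_\infty)\cong H^i_\fm(R)\otimes_R R_\infty=0$ for $i<\dim R$, and $R_\infty/\fm R_\infty\neq 0$ is not almost zero by Example~\ref{coh} (i), as $\fm R_\infty$ is a proper finitely generated ideal. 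The almost flatness of $S_\infty$ over $R_\infty$ is exactly Theorem~\ref{almflat}.

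For the faithfulness, I would mimic the bookkeeping in the proof of Corollary~\ref{flat}. Since $R$ is a normal domain and $R\hookrightarrow S$ is module-finite, choosing a $\mathrm{Frac}(R)$-basis of $S\otimes_R\mathrm{Frac}(R)$ inside $S$ produces a free $R$-submodule $G\subseteq S$ of rank $\ell:=\operatorname{rank}_R S\geq 1$ with $S/G$ finitely generated and torsion, so $cS\subseteq G$ for some $0\neq c\in R$; one may take $c$ a non-unit, so $v(c)>0$. Applying the Frobenius repeatedly and passing to the limit yields $J\cdot S_\infty\subseteq G_\infty$, where $J:=\bigcup_{n>0}c^{p^{-n}}R_\infty$ and $G_\infty\cong R_\infty^{\oplus\ell}$; since $v(c^{p^{-n}})=p^{-n}v(c)\to 0$, every element of $S_\infty/G_\infty$ is killed by elements of arbitrarily small valuation, i.e. $S_\infty/G_\infty\approx 0$. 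Tensoring $G_\infty\hookrightarrow S_\infty$ with any $R_\infty$-module $N$ and using Lemma~\ref{toralm1}, we get $S_\infty\otimes_{R_\infty}N\approx G_\infty\otimes_{R_\infty}N\cong N^{\oplus\ell}$, so $S_\infty\otimes_{R_\infty}N\approx 0$ forces $N\approx 0$. Hence $S_\infty$ is almost faithfully flat over $R_\infty$, and Theorem~\ref{almCM} finishes the proof.

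The step I expect to be delicate is this last one: Theorem~\ref{almflat} provides only almost flatness, and upgrading to almost faithful flatness requires building the free submodule $G$ and controlling $S_\infty/G_\infty$, together with the value-map bookkeeping, which is transparent when $S$ is a domain but needs care, via the factorization of the total ring of fractions of $S$, when $S$ is only reduced. If one prefers, faithfulness can be bypassed: running only the first half of the argument of Theorem~\ref{almCM}, which uses nothing but almost flatness of $S_\infty$, already gives $H^i_\fm(S_\infty)\approx 0$ for $i\neq\dim R$, while $S_\infty/\fm S_\infty$ is not almost zero directly by Example~\ref{coh} (i), since $\fm S_\infty$ is a proper finitely generated ideal of $S_\infty$.
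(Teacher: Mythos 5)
Your proposal takes exactly the paper's route: the paper derives this corollary as an ``immediate application'' of Theorem~\ref{almCM} and Theorem~\ref{almflat}, giving no further details. Your filling-in is sound, and in particular you correctly spot that Theorem~\ref{almflat} supplies only almost flatness while Theorem~\ref{almCM} demands almost faithful flatness; your patch (the free submodule $G\subseteq S$ with $cS\subseteq G$, giving $S_\infty\otimes_{R_\infty}N\approx N^{\oplus\ell}$, or alternatively bypassing faithfulness altogether by applying Example~\ref{coh} (i) to the finitely generated proper ideal $\fm S_\infty$) is precisely the bookkeeping the paper leaves implicit, cf.\ Remark~\ref{simple} (ii).
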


%We conclude this section with the following remark.

\begin{remark}
\label{simple}
$\mathrm{(i)}$: The proof of Theorem \ref{almCM} becomes more simple, if $T$ is a balanced big Cohen-Macaulay $R$-algebra. Indeed, let $\underline{x}=x_1,\ldots,x_d$ be a system of parameters of $R$. Then $\mathbb{K}_{\bullet}(\underline{x};T)$ provides a projective resolution for $T/(\underline{x})T$ and there are the following isomorphisms:
$$
H_i(\mathbb{K}_{\bullet}(\underline{x};M)) \simeq H_i(\mathbb{K}_{\bullet}(\underline{x};T)\otimes_{T}M) \simeq \Tor_i^{T}(T/(\underline{x})T,M),
$$
which is almost zero for all $i\neq 0$. Since $H^{i}_{\fm}(M) \simeq {\varinjlim}_nH_{d-i}(\underline{x}^{n};M)$, we have $H^{i}_{\fm}(M) \approx 0$ for all $i \ne d$.

$\mathrm{(ii)}$:
The proof of Theorem \ref{almflat} becomes more simple, if $S$ is a torsion-free $R$-module. Indeed, by an inspection of the proof of Corollary \ref{flat}, $S_{\infty}$ is almost isomorphic to a free $R_{\infty}$-module of finite rank. Now Lemma \ref{toralm} completes the argument.

$\mathrm{(iii)}$:
Adopt the assumption of Corollary \ref{flat}. Then $R^{+}$ is not flat over $R_{\infty}$ in general. To see an example, let $R$ be a complete local $F$-pure domain which is not Cohen-Macaulay. Such a ring is known to exist. $R_{\infty}$ is not Cohen-Macaulay, since $R$ is not and $R\hookrightarrow R_{\infty}$ is pure. A system of parameters $\underline{x}=x_1,\ldots,x_d$ of $R$ is not a regular sequence on $R_{\infty}$. Assume that $R^{+}$ is flat over $R_{\infty}$. Then it needs to be faithfully flat and thus, $\underline{x}$ is not regular on $R^+$. By \cite[Theorem 5.5]{HH}, $R^+$ is a big Cohen-Macaulay $R$-algebra. This contradiction shows that $R^{+}$ is not flat over $R_{\infty}$.
\end{remark}

\section{$F$-coherent rings and almost regularity}

In this section, we introduce the notion of almost regular algebras and prove Theorem \ref{exam1} and some corollaries. Besides, we would like to study the structure of $F$-coherent rings defined in \cite{S} in connection with almost regular algebras. First of all, we begin with the definition of almost regularity.

\begin{definition}\label{almostreg}
Let $A$ be an algebra equipped with a value map. We say that $A$ is \textit{almost regular}, if there exists an integer $k$ such that $\Ext^n_A (M,N)\approx0$ for all $n>k$ and all $A$-modules $M$ and $N$.
\end{definition}

In what follows, we use freely the following homological properties of coherent rings.

\begin{lemma}\label{homologycoh}
The following assertions hold.

\begin{enumerate}
\item[$\mathrm{(i)}$]\cite[Theorem 2.3.3]{G}
A flat colimit of coherent rings is coherent.
\item[$\mathrm{(ii)}$]\cite[Theorem 6.3.4]{G}
A domain of global dimension less than $3$ is coherent.
\item[$\mathrm{(iii)}$]\cite[Lemma 4.2.3]{G}
A quasilocal coherent ring with the property that every principal ideal has finite projective dimension is a domain.\footnote{According to \cite[7.3.13]{G}, there exists a quasilocal ring of finite global dimension with a nonzero zero-divisor.}
\item[$\mathrm{(iv)}$]\cite[Theorem 7.3.14]{G}
Let $A$ be a coherent ring of global dimension less than three. Then the polynomial algebra $A[X_1,\ldots,X_n]$ is coherent.
\item[$\mathrm{(v)}$]\cite[Corollary 4.2.6]{G}
A ring of weak dimension less than $2$ is locally a valuation domain.
\item[$\mathrm{(vi)}$]\cite[Proposition 4.1]{AH}
Let $\fa$ be a finitely generated ideal of a perfect coherent ring $A$ that is an integral extension of a Noetherian ring. Then $\pd_{A}(A/\fa)\leq \dim A$.
\item[$\mathrm{(vii)}$]\cite[Theorem 2.6.2]{G} A ring $A$ is  coherent if and only if
$$
{\varinjlim}_{\gamma\in \Gamma}\Ext^n_A(P,M_\gamma)\longrightarrow \Ext^n_A(P, {\varinjlim}_{\gamma\in \Gamma} M_\gamma)
$$
is an isomorphism for every finitely presented module $P$ and every direct system of $A$-modules $\{M_\gamma~|~\gamma\in \Gamma\}$ and all $n\geq1$.
\end{enumerate}
\end{lemma}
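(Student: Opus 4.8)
The plan is essentially one of bookkeeping rather than proof, since each of the seven assertions is already available in the literature in exactly the form stated. First I would match each item to its source and record the reference: item (i) is \cite[Theorem 2.3.3]{G}; item (ii) is \cite[Theorem 6.3.4]{G}; item (iii) is \cite[Lemma 4.2.3]{G}, with the accompanying footnote being the example in \cite[7.3.13]{G}; item (iv) is \cite[Theorem 7.3.14]{G}; item (v) is \cite[Corollary 4.2.6]{G}; item (vi) is \cite[Proposition 4.1]{AH}; and item (vii) is \cite[Theorem 2.6.2]{G}. Thus the ``proof'' amounts to pointing the reader at Glaz's monograph on commutative coherent rings for (i)--(v) and (vii), and to \cite{AH} for (vi).

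The only genuine care needed is in checking that the hypotheses are quoted faithfully. For (iii) I would confirm that ``every principal ideal has finite projective dimension'' is precisely the hypothesis under which Glaz concludes that a quasilocal coherent ring is a domain. For (vi) I would confirm that ``$A$ is a perfect coherent ring that is an integral extension of a Noetherian ring'' is the exact setting in which \cite{AH} obtains $\pd_A(A/\fa)\le \dim A$ for finitely generated $\fa$. For (vii) I would note that the stated colimit isomorphism with finitely presented $P$ is the standard characterization of coherence via commutation of $\Ext$ with direct limits. None of these presents any obstacle.

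If one insisted on self-contained arguments, the only substantive ones would be (ii) and (iv) (coherence of domains of global dimension $<3$ and of polynomial extensions of rings of small global dimension) and (vi) (the projective-dimension bound, proved in \cite{AH} via iterating the Frobenius on a perfect coherent ring). Reproducing these would merely duplicate the cited proofs, and since the lemma is included solely as a reference to be invoked in the later discussion of $F$-coherent and almost regular algebras, I would not attempt to re-derive them.
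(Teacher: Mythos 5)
Your proposal matches the paper exactly: the lemma is a compendium of known results and the paper offers no proof beyond the citations attached to each item, which is precisely what you describe. Nothing further is needed.
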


\begin{corollary}\label{homologycoh1}
We have the following assertions:
\begin{enumerate}
\item[$\mathrm{(i)}$]
A perfect quasilocal coherent ring of finite Krull dimension is a domain.

\item[$\mathrm{(ii)}$]
In addition to the assumption of Lemma \ref{homologycoh} (vi), assume that $(A,\fm)$ is quasilocal and of finite Krull dimension. Then $\pd_{A}(A/\fa)\leq \Kdepth_A(A)\leq \dim A$.
\end{enumerate}
\end{corollary}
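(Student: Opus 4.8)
Both parts follow by combining results already in hand: part~(ii) from Lemma~\ref{homologycoh}(vi) together with Northcott's non-Noetherian Auslander--Buchsbaum formula (Remark~\ref{AB}(ii)), and part~(i) as a formal consequence of part~(ii) and Lemma~\ref{homologycoh}(iii). So I would prove (ii) first and then deduce (i) from it.

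For (ii), the target $\pd_A(A/\fa)\le\Kdepth_A(A)\le\dim A$ splits in two. The right inequality is immediate: $\Kdepth_A(A)=\Kgrade_A(\fm,A)\le\Ht_A(\fm)=\dim A$ by \cite[Lemma~3.2]{AT}. For the left inequality I would apply Northcott's formula to $M:=A/\fa$,
$$
\pd_A(A/\fa)+\Kdepth_A(A/\fa)=\Kdepth_A(A),
$$
and discard the non-negative term $\Kdepth_A(A/\fa)$. The point requiring care is the hypothesis of that formula, namely that $M=A/\fa$ admit a finite resolution by finitely generated free $A$-modules, which I would build in three moves. First, Lemma~\ref{homologycoh}(vi) gives $\pd_A(A/\fa)=n<\infty$. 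Second, since $A/\fa$ is finitely presented over the coherent ring $A$, it has a resolution $\cdots\to F_1\to F_0\to A/\fa\to 0$ by finitely generated free modules whose every syzygy is again finitely presented --- this is precisely what coherence propagates through syzygies. Third, the $n$-th syzygy is then finitely generated and projective, hence free since $A$ is quasilocal; truncating at stage $n$ yields the desired finite free resolution, and Northcott's formula applies.

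Part~(i) then follows: for any $x\in A$ one has $xA\cong A/(0:_Ax)$, and the ideal $(0:_Ax)$ is finitely generated because $A$ is coherent, so (ii) gives $\pd_A(xA)=\pd_A\big(A/(0:_Ax)\big)\le\dim A<\infty$; as $A$ is quasilocal and coherent with every principal ideal of finite projective dimension, Lemma~\ref{homologycoh}(iii) forces $A$ to be a domain. One small caveat: invoking Lemma~\ref{homologycoh}(vi) presupposes that $A$ is an integral extension of a Noetherian ring, as it is for the perfect algebras of interest in Section~7; absent that hypothesis one would instead exploit that a perfect ring is reduced and that $(0:_Ax)=(0:_Ax^{1/p^n})$ is a radical ideal for every $n$ in order to argue the finiteness of $\pd_A(xA)$ directly. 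The main obstacle throughout is not a single deep step but the careful interplay, in the non-Noetherian setting, of coherence (to keep syzygies finitely presented), finiteness of Krull dimension via Lemma~\ref{homologycoh}(vi) (to terminate the resolution), and quasilocality (to turn the terminal projective syzygy into a free module); once these are aligned, Northcott's formula does the rest.
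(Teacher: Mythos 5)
Your proposal is correct and follows essentially the same route as the paper: part (ii) is Lemma \ref{homologycoh}(vi) (to produce a finite resolution by finitely generated free modules, which you rightly justify via coherence of syzygies and freeness of finitely generated projectives over a quasilocal ring) combined with Northcott's formula from Remark \ref{AB}(ii) and the bound $\Kgrade_A(\fm,A)\le\Ht(\fm)$, while part (i) combines parts (iii) and (vi) of Lemma \ref{homologycoh} exactly as the paper does, your detour through (ii) being only a cosmetic reordering. Your caveat that Lemma \ref{homologycoh}(vi) formally presupposes $A$ to be an integral extension of a Noetherian ring is a fair observation about a hypothesis the corollary's statement leaves implicit, and it does not affect the intended applications.
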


\begin{proof}
$\mathrm{(i)}$: This follows by applying parts $\mathrm{(iii)}$ and  $\mathrm{(vi)}$ of Lemma \ref{homologycoh}.

$\mathrm{(ii)}$: In view of Lemma \ref{homologycoh} (vi), any finitely generated ideal $\fa$ admits a resolution of finite length by finitely generated free modules. In view of Remark \ref{AB} (ii), we get
$$
\pd_A(A/ \fa) +\Kdepth_A(A/ \fa) = \Kdepth_A(A),
$$
which yields the claim.
\end{proof}

We recall from \cite[Definition 3.1]{S} that a Noetherian ring $R$ of characteristic $p>0$ is \textit{F-coherent}, if its perfect closure $R_{\infty}$ is coherent. This notion, as its name suggests, is related to tight closure theory. As shown in \cite{S}, it is easy to verify that a ring is $F$-coherent in certain cases, in which $F$-purity, $F$-regularity, or $F$-rationality can be checked as well. The following question is one of the important ones in the study of $F$-coherent rings.

\begin{question}\label{come}
Let $R$ be an $F$-coherent ring of characteristic $p>0$. Then does the perfect closure $R_{\infty}$ coincide with the perfect closure of some regular Noetherian ring?
\end{question}

Unfortunately, we are still far from answering this question at present. Due to the difficulty of finding good characterizations of coherent property in high Krull dimension, it is worth trying to find an approach to the above question by relating it to regularity.

\begin{proposition}
\label{perfect}\cite[Theorem 1.2 (iii)]{A}
Let $R$ be an $F$-coherent reduced ring of characteristic $p>0$. Then the following hold:
\begin{enumerate}
\item[$\mathrm{(i)}$]
$\Wdim(R_{\infty}):=\sup\{\fd(M)~|~M~\textit{is an $R_{\infty}$-module}\} \le \dim R$.
\item[$\mathrm{(ii)}$]
$\gd(R_{\infty}) \le \dim R+1$.
\end{enumerate}
\end{proposition}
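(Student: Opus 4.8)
The plan is to establish (i) and (ii) in turn, using throughout that $R_{\infty}$ is a \emph{perfect coherent} ring (perfect since it is a perfect closure, coherent since $R$ is $F$-coherent) which is an \emph{integral extension of the Noetherian ring} $R$, and that integral extensions preserve Krull dimension, so that $\dim R_{\infty}=\dim R=:d$.

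For (i) I would invoke Lemma~\ref{homologycoh}~(vi): every finitely generated ideal $\fa\subseteq R_{\infty}$ satisfies $\pd_{R_{\infty}}(R_{\infty}/\fa)\le\dim R_{\infty}=d$, hence $\fd_{R_{\infty}}(R_{\infty}/\fa)\le d$. To pass to arbitrary modules I would use that $\Tor$ commutes with filtered colimits: for an arbitrary ideal $I$, writing $R_{\infty}/I=\varinjlim R_{\infty}/\fa$ over the finitely generated $\fa\subseteq I$ gives $\fd_{R_{\infty}}(R_{\infty}/I)\le d$; then a finite-filtration argument for finitely generated modules together with one more filtered colimit yields $\fd_{R_{\infty}}(M)\le d$ for every $R_{\infty}$-module $M$, that is, $\Wdim(R_{\infty})\le d$.

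For (ii) the extra ingredient is a cardinality bound on homological dimension. First I would observe that every ideal of $R_{\infty}$ is countably generated: since $R_{\infty}=\bigcup_{n>0}R_{n}$ with each $R_{n}$ Noetherian (for the minimal perfect closure, $R_{n}=R^{p^{-n}}\cong R$), an ideal $\fa\subseteq R_{\infty}$ is the increasing union of the ideals $\fa\cap R_{n}$, and each $\fa\cap R_{n}$ is finitely generated over $R_{n}$; collecting these generators over all $n$ exhibits $\fa$ as generated over $R_{\infty}$ by a countable set. Now I would apply the theorem of Osofsky that a ring all of whose ideals are countably generated and which has finite weak global dimension $w$ has global dimension at most $w+1$. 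Combined with (i), where $w=\Wdim(R_{\infty})\le d$, this gives $\gd(R_{\infty})\le\Wdim(R_{\infty})+1\le d+1$.

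The substantive step is (ii). Part (i) is essentially a matter of feeding Lemma~\ref{homologycoh}~(vi) through the standard colimit formalism, whereas (ii) rests on the cardinality bound relating weak and global dimension, whose underlying mechanism is that a countably generated flat module has projective dimension at most one; this allows one to convert a flat resolution of length $d$ of a countably generated module into a projective resolution of length $d+1$, after which a transfinite filtration argument propagates the bound to all modules. Two points deserve emphasis: the finiteness of $\Wdim(R_{\infty})$ needed to invoke Osofsky's theorem is precisely what (i) supplies; and the coherence of $R_{\infty}$, which is where the $F$-coherence hypothesis enters, is used only so that Lemma~\ref{homologycoh}~(vi) applies, not to control the global dimension directly.
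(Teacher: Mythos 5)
Your proposal is correct and follows essentially the same route as the paper: part (i) comes from Lemma~\ref{homologycoh}~(vi) applied to finitely generated ideals together with the standard colimit/filtration reduction (which the paper leaves implicit), and part (ii) comes from the countable generation of ideals of $R_{\infty}=\bigcup_n R_n$ plus the Osofsky-type bound $\gd \le \Wdim+1$, which is exactly the content the paper delegates to the proof of Lemma~3.2 of \cite{A}. The only difference is that you have written out the details the paper cites away.
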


\begin{proof}
Let us prove both assertions at the same time. Then we may assume that $d:=\dim R=\dim R_{\infty}$ is finite. Let $\fa \subseteq R_{\infty}$ be a finitely generated ideal. Then Lemma \ref{homologycoh} (vi) implies that $\pd_{R_{\infty}}(R_{\infty}/\fa)\leq d<\infty$. If the projective dimension of all finitely generated ideals of a ring  $A$ with only countably generated ideals is less than $d$, then the  projective dimension of an arbitrary $A$-module is less than $d+2$ (see \cite[Lemma 3.2]{A}). Now we show that any ideal of $R_{\infty}$ is countably generated. Indeed, for each $n>0$, set $R_n:=\{x\in R_\infty~|~x^{p^n}\in R\}$, which is a Noetherian ring. Since $R_{\infty}=\bigcup_{n>0} R_n$, any ideal of $R_{\infty}$ is countably generated.
\end{proof}

\begin{corollary}\label{dom}
Let $(R,\fm)$ be a local ring of characteristic $p>0$. Then we have the following:

\begin{enumerate}
\item[$\mathrm{(i)}$]
Assume that $R$ is reduced and $F$-coherent. Then $R$ is a domain.

\item[$\mathrm{(ii)}$]
Assume that $R$ is one-dimensional and $F$-coherent. Then $R[X]$ is also $F$-coherent.

\item[$\mathrm{(iii)}$]
Let $t \in R$ be a non-zero divisor such that $R/tR$ is reduced and $F$-coherent. Then $R$ is a domain.

\end{enumerate}
\end{corollary}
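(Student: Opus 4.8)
\textbf{Proof proposal for Corollary~\ref{dom}.}

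The plan is to derive all three assertions from Proposition~\ref{perfect} together with the characterizations of coherent rings collected in Lemma~\ref{homologycoh} and its corollary. For part (i), the strategy is: by Proposition~\ref{perfect}, $R_\infty$ is a coherent ring of finite weak dimension bounded by $\dim R$, hence of finite global dimension bounded by $\dim R+1$; in particular every principal ideal of $R_\infty$ has finite projective dimension. Since $R_\infty$ is quasilocal (being an integral extension of the local ring $R$, so it has a unique maximal ideal lying over $\fm$) and coherent, Lemma~\ref{homologycoh} (iii) — equivalently part (i) of the corollary following it — forces $R_\infty$ to be a domain. But $R \hookrightarrow R_\infty$ is injective because $R$ is reduced, so $R$ is a domain as well.

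For part (ii), I would use Lemma~\ref{homologycoh} (iv): by Proposition~\ref{perfect}, the coherent ring $R_\infty$ has global dimension at most $\dim R + 1 = 2 < 3$, so the polynomial algebra $R_\infty[X]$ is coherent. It remains to identify $R_\infty[X]$ with the perfect closure of $R[X]$; since $R[X]$ is reduced (as $R$ is, by part (i) it is even a domain) and since adjoining $p$-power roots of $R$ and of $X$ and taking the union yields exactly $\bigcup_{n>0} R^{p^{-n}}[X^{p^{-n}}] = (R[X])_\infty$, we conclude that $(R[X])_\infty$ is coherent, i.e. $R[X]$ is $F$-coherent.

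For part (iii), the idea is to reduce to part (i) via the non-zero-divisor $t$. One first checks that $R$ is reduced: if $a^k=0$ then, working modulo $t$ and using that $R/tR$ is reduced, one deduces $a \in tR$, and then a standard argument with $t$ a non-zero divisor (writing $a = tb$, so $t^k b^k = 0$, hence $b^k=0$, and iterating) shows $a \in \bigcap_n t^n R$; invoking completeness or the Krull intersection theorem in the local Noetherian ring $R$ gives $a=0$. Once $R$ is known to be reduced, one shows $R$ is itself $F$-coherent: since $t$ is a non-zero divisor and $R/tR$ is $F$-coherent, a base-change/flatness argument (the sequence $t^{p^{-n}}$ behaves compatibly under Frobenius, and $R_\infty / t^{p^{-n}} R_\infty$ relates to $(R/tR)_\infty$) lets one transfer coherence of $(R/tR)_\infty$ upward to $R_\infty$, perhaps together with Lemma~\ref{homologycoh} (i) on flat colimits. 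Then part (i) applies to conclude $R$ is a domain.

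The main obstacle I expect is part (iii): establishing that $F$-coherence of $R/tR$ implies $F$-coherence of $R$ is not formal, because coherence does not in general descend or ascend along quotients by a non-zero-divisor without extra hypotheses. The delicate point is controlling $R_\infty$ from $(R/tR)_\infty$ — one needs that $t^{p^{-n}}$ remains a non-zero-divisor in $R_\infty$ (which follows from $R$ being a domain, but that is what we are trying to prove, so the argument must be arranged carefully to first get reducedness, then coherence, then domain-hood) and that the relevant quotients are finitely presented over the $R_n$. I would handle this by working level by level with the Noetherian approximations $R_n := \{x \in R_\infty \mid x^{p^n} \in R\}$, showing each $R_n/t^{p^{-n}}R_n$ is essentially $(R/tR)_n$ up to the coherence-relevant data, and then passing to the flat colimit via Lemma~\ref{homologycoh} (i).
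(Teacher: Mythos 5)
Your part (i) is essentially the paper's argument (the paper routes the finiteness of projective dimension of principal ideals through Lemma~\ref{homologycoh}(vi) rather than through Proposition~\ref{perfect}, but these amount to the same thing), and your part (ii) has the right ingredients but a wrong identification: $(R[X])_\infty$ is \emph{not} $R_\infty[X]$, because one must also adjoin the roots $X^{p^{-n}}$, and $X^{1/p}\notin R_\infty[X]$. The correct statement is $(R[X])_\infty=\bigcup_{n>0}R_\infty[X^{p^{-n}}]$; each $R_\infty[X^{p^{-n}}]$ is a one-variable polynomial ring over $R_\infty$, hence coherent by Lemma~\ref{homologycoh}(iv) since $\gd(R_\infty)\le 2$, the transition maps $R_\infty[X^{p^{-m}}]\to R_\infty[X^{p^{-n}}]$ are flat (indeed free), and coherence of the union then follows from the flat-colimit criterion Lemma~\ref{homologycoh}(i). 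As written, your conclusion that $(R[X])_\infty$ is coherent does not follow from coherence of $R_\infty[X]$.

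Part (iii) is where the proposal genuinely fails. Your plan hinges on ascending $F$-coherence from $R/tR$ to $R$, i.e.\ on deducing coherence of $R_\infty$ from coherence of $(R/tR)_\infty$. You flag this yourself as ``not formal,'' and it is in fact the whole difficulty: no such deformation statement is proved (or used) anywhere in the paper, coherence is notoriously unstable under this kind of operation, and the level-by-level reduction you sketch does not obviously close the gap. The paper's proof sidesteps $F$-coherence of $R$ entirely. By part (i) applied to $R/tR$, the ideal $P:=tR$ is prime, and $0\to R\xrightarrow{t}R\to R/tR\to 0$ shows $\pd_R(R/P)=1<\infty$; localizing gives $\pd_{R_P}(R_P/PR_P)<\infty$, so $R_P$ is regular, hence a domain. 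If $xy=0$ in $R$, then one of $\overline{x},\overline{y}$ vanishes in $R_P$, producing $r\notin P$ with, say, $rx=0$; since $r$ is a non-zero-divisor on the finite-projective-dimension module $R/P$, Auslander's zero-divisor theorem forces $r$ to be a non-zero-divisor on $R$, whence $x=0$. Your preliminary argument that $R$ is reduced (via $a\in\bigcap_n t^nR=0$) is correct but unnecessary on this route; the essential missing idea in your proposal is the use of Auslander's zero-divisor theorem in place of the unproven coherence ascent.
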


\begin{proof}
(i): Since $R_{\infty}$ is a domain by Corollary \ref{homologycoh1} (i), $R \subseteq R_{\infty}$ implies that $R$ is a domain.

(ii): The perfect closure of $R[X]$ is just $\bigcup_{n>0} R_{\infty}[X^{p^{-n}}]$. Since Proposition \ref{perfect} shows that $\gd(R_{\infty}) \le 2$, the ring $R_{\infty}[X^{p^{-n}}]$ is coherent by Lemma \ref{homologycoh} (iv). Then since  $R_{\infty}[X^{p^{-m}}] \to R_{\infty}[X^{p^{-n}}]$ is flat for $m\leq n$, the ring $R[X]_{\infty}$ is coherent by Lemma \ref{homologycoh} (i).

(iii): The ideal $P:=tR$ is prime by Corollary \ref{dom}. Now since
$$
\begin{CD}
0 @>>> R @>t>> R @>>> R/tR @>>> 0
\end{CD}
$$
is a minimal free resolution of the $R$-module $R/P$, we have
$$
\pd_{R_P}(R_P/PR_P) \le \pd_R(R/P) < \infty,
$$
showing that $R_P$ is regular and thus, it is a domain. If we assume to have $x \cdot y=0$ in $R$, the equality $\overline{x} \cdot \overline{y}=0$ in $R_P$ yields an element $r \in R$, but $r \notin P$ such that either $r \cdot x=0$ or $r \cdot y=0$. For simplicity, assume that $r \cdot x=0$. By Auslander's zero-divisor conjecture \cite[Theorem 9.4.7]{BH} applied for the $R$-module $R/P$, it follows that $r \in R$ is regular, since $r$ is evidently $R/P$-regular. Hence we get $x=0$ and $R$ is a domain.
\end{proof}

\begin{example}
It is necessary to assume that $R$ is $F$-coherent in Proposition \ref{perfect}. Take
$$
R:=\mathbb{F}_3[x,y]/(y^2-x^3-x^2)\simeq\mathbb{F}_3[t,t\sqrt{t+1}]
$$
whose normalization in its field of fractions is $\mathbb{F}_3[t,\sqrt{t+1}]$, and it is not purely inseparable over $R$. This observation along with \cite[Corollary 3.8]{S} implies that $R_{\infty}$ is not normal and $\Wdim(R_{\infty})>\dim R$ by Lemma \ref{homologycoh} (v). In view of \cite[Theorem 3.7]{S}, $R_{\infty}$ is not coherent. Combined with Lemma \ref{homologycoh} (ii), we get $\gd(R_{\infty})>\dim R+1$.
\end{example}

\begin{theorem}
\label{exam1}
Let $R$ be an $F$-coherent domain of finite Krull dimension. Let $S$ be a torsion-free, module-finite, and reduced $R$-algebra. Then $S_{\infty}$ is almost regular.
\end{theorem}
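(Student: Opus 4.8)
### Proof Strategy for Theorem~\ref{exam1}

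The plan is to reduce the almost regularity of $S_\infty$ to the (non-almost) finiteness of global dimension of a related perfect ring, using the almost flatness results of Section~6 as a bridge. First I would invoke Proposition~\ref{perfect}: since $R$ is an $F$-coherent domain of finite Krull dimension, $\gd(R_\infty) \le \dim R + 1 =: k$. The goal is to show that $\Ext^n_{S_\infty}(M,N) \approx 0$ for all $S_\infty$-modules $M,N$ and all $n > k$. The key idea is that $S_\infty$ is almost flat over $R_\infty$, so homological information over $S_\infty$ is almost controlled by homological information over $R_\infty$, where we have an honest bound.

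The main steps, in order, would be: (1) By Cohen's structure theorem, choose a regular local ring $A$ (or, in the non-local global setting, reduce to the local case or pass to a Noether normalization) over which $R$ is module-finite, so that one has the tower $A \to R \to S$ and correspondingly $A_\infty \to R_\infty \to S_\infty$. (2) Apply Theorem~\ref{almflat} (together with the argument of Corollary~\ref{flat}) to conclude that $S_\infty$ is almost flat over $R_\infty$; here the hypothesis that $S$ is torsion-free, module-finite, and reduced over $R$ is exactly what is needed, and in fact (as in Remark~\ref{simple}(ii)) when $S$ is torsion-free one gets that $S_\infty$ is almost isomorphic as an $R_\infty$-module to a finite free module, so Lemma~\ref{toralm} applies. (3) Use a change-of-rings spectral sequence, e.g. $\Ext^p_{S_\infty}(M, \Ext^q_{R_\infty}(S_\infty, N)) \Rightarrow \Ext^{p+q}_{R_\infty}(M,N)$ for $S_\infty$-modules $M,N$ — valid because $S_\infty$ is (almost) flat over $R_\infty$ — combined with the fact that $\Ext^q_{R_\infty}(S_\infty,N)$ vanishes (or is almost zero) for $q > k$ since $\gd(R_\infty) \le k$. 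Running Lemma~\ref{spec} on this spectral sequence, with the class of almost zero modules as the torsion theory, should force $\Ext^n_{R_\infty}(M,N) \approx 0$ for $n > k$, and then a further change-of-rings comparison (or the faithful almost flatness) transfers this back to $\Ext^n_{S_\infty}(M,N) \approx 0$.

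The delicate point is step (3): the almost flatness of $S_\infty$ over $R_\infty$ is only \emph{almost}, not genuine, so the standard change-of-rings spectral sequences do not apply verbatim. I expect one must argue more carefully — for instance, replacing $S_\infty$ by the finite free $R_\infty$-module $F$ to which it is almost isomorphic, computing $\Ext^\bullet_{R_\infty}(F,N)$ honestly (this vanishes above $\gd R_\infty$ since $F$ is free), and then tracking the almost isomorphism $S_\infty \to F$ (or $F \to S_\infty$) through $\Hom$ and its derived functors via Lemma~\ref{toralm1}'s $\Ext$-analogue, being mindful of the asymmetry of almost isomorphisms noted in the Remark following Definition~\ref{iso}. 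One also has to handle the fact that $M$ is an arbitrary $S_\infty$-module, not finitely generated; here one uses that almost zero modules are closed under direct limits and, via Lemma~\ref{homologycoh}(vii), that $\Ext$ over the coherent ring $S_\infty$ commutes with the relevant colimits of the second variable, reducing to finitely presented modules where the resolutions from Lemma~\ref{AT} and the finiteness of $\gd R_\infty$ can be combined.

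Finally, I would assemble these pieces: from $\gd(R_\infty) \le \dim R + 1$ and the almost-free structure of $S_\infty$ over $R_\infty$, conclude $\Ext^n_{S_\infty}(M,N) \approx 0$ for all $n > \dim R + 1$ and all $S_\infty$-modules $M, N$, which is precisely the assertion that $S_\infty$ is almost regular with bound $k = \dim R + 1$. The main obstacle, to reiterate, is making the \emph{almost} change-of-rings argument rigorous rather than the bookkeeping; everything else is a routine combination of Proposition~\ref{perfect}, Theorem~\ref{almflat}, Lemma~\ref{spec}, and the coherence facts in Lemma~\ref{homologycoh}.
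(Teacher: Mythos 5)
Your overall strategy is the paper's: combine the bound $\gd(R_\infty)\le\dim R+1$ from Proposition~\ref{perfect} with the fact that $S_\infty$ sits between finite free $R_\infty$-modules, and pass through the change-of-rings spectral sequence $\Ext^p_{S_\infty}(M,\Ext^q_{R_\infty}(S_\infty,N))\Rightarrow\Ext^{p+q}_{R_\infty}(M,N)$. But the execution as written has genuine gaps. First, the collapse of the spectral sequence is justified by the wrong fact: you invoke $\Ext^q_{R_\infty}(S_\infty,N)\approx 0$ for $q>k$ from the global dimension bound, whereas Lemma~\ref{spec} requires almost vanishing of $E^2_{p,q}$ for \emph{all} $q\neq 0$; with only the rows $q>k$ killed, $k+1$ rows survive and you cannot isolate $\Ext^n_{S_\infty}$. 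What actually collapses the sequence is the almost projectivity of $S_\infty$ over $R_\infty$: torsion-freeness gives a free submodule $N\simeq R^{\oplus\ell}\subseteq S$ with $cS\subseteq N$, hence $J\cdot S_\infty\subseteq N_\infty$ for $J=\bigcup_{n>0} c^{p^{-n}}R_\infty$, and the long exact sequence of $\Ext_{R_\infty}(-,?)$ applied to $0\to N_\infty\to S_\infty\to S_\infty/N_\infty\to 0$ gives $J\cdot\Ext^q_{R_\infty}(S_\infty,-)=0$ for all $q>0$. This is a \emph{uniform} annihilation statement; your appeal to ``Lemma~\ref{toralm1}'s $\Ext$-analogue'' is precisely what the paper's closing remark warns against (an almost zero module need not have almost zero $\Ext$), so you must work with the honest ideal $J$ rather than with the almost-isomorphism formalism. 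Also, your worry that the spectral sequence needs flatness is misplaced: it exists for an arbitrary ring map (Rotman, Theorem 11.66); the almost-freeness is needed only for the collapse, not for the existence.

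Second, after the collapse you only obtain $\Ext^n_{R_\infty}(M,N)\approx\Ext^n_{S_\infty}(M,\Hom_{R_\infty}(S_\infty,N))$, and it remains to convert the second variable: one needs $\Hom_{R_\infty}(S_\infty,-)\approx\Hom_{R_\infty}(N_\infty,-)\simeq(-)^{\oplus\ell}$ (again from $J\cdot S_\infty\subseteq N_\infty$) to conclude $\Ext^n_{R_\infty}(M,-)\approx\Ext^n_{S_\infty}(M,-)^{\oplus\ell}$, whence the vanishing of the left side for $n>\dim R+1$ (from $\pd_{R_\infty}(M)\le\dim R+1$, Proposition~\ref{perfect}) transfers to the right. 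Your ``further change-of-rings comparison (or the faithful almost flatness)'' does not supply this step. Finally, the detour through Cohen's structure theorem or Noether normalization is both unnecessary and unavailable here ($R$ is merely an $F$-coherent domain of finite Krull dimension, not complete local or affine), and Theorem~\ref{almflat} is stated for $R$ regular local; the argument should be run directly on $R\to S$ using only the torsion-free squeeze described above.
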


\begin{proof}
As in the proof of Corollary \ref{flat}, we can take a free $R$-submodule $R^{\oplus\ell} \simeq N \subseteq S$ such that $c \cdot S \subseteq N$ for some nonzero $c \in R$. Let $J:=\bigcup_{n>0}c^{p^{-n}}R_{\infty}$ and $R_{\infty}^{\oplus \ell} \simeq N_{\infty}$. Then $J \cdot S_{\infty} \subseteq N_{\infty}$. In particular, $J \cdot \Ext^q_{R_{\infty}}(S_{\infty}/N_{\infty},-)=0$ for all $q \ge 0$ by taking an injective resolution. Take the following short exact sequence:
$$
\begin{CD}
0 @>>> N_{\infty} @>>>S_{\infty} @>>> S_{\infty}/N_{\infty}@>>>0,
\end{CD}
$$
with its induced long exact sequence:
$$
\begin{CD}
\cdots @>>> \Ext_{R_{\infty}}^{i}(S_{\infty}/N_{\infty},-) @>>>\Ext_{R_{\infty}}^{i}(S_{\infty},-) @>>> \Ext_{R_{\infty}}^{i}(N_{\infty},-)@>>>\cdots.
\end{CD}
$$
Then we find that $J \cdot \Ext^q_{R_{\infty}}(S_{\infty},-)=0$ for all $q>0$. Now for an $S_{\infty}$-module $M$, we have $J \cdot \Ext^p_{S_{\infty}} (M, \Ext^q_{R_{\infty}}(S_{\infty},-))=0$ for all $p \ge 0$ and $q>0$ by taking a projective resolution. Consider the spectral sequence \cite[Theorem 11.66]{Rot}:
$$
E^2_{p,q}:=\Ext^p_{S_{\infty}} (M, \Ext^q_{R_{\infty}}(S_{\infty},-))\underset{p}\Rightarrow \Ext^n
_{R_{\infty}}(M,-).
$$
By the above computation, $E^2_{p,q}\approx0$ for all $p \ge 0$ and $q>0$. Thus, Lemma \ref{spec} yields an $R_{\infty}$-isomorphism:
$$
\Ext^n_{S_{\infty}} (M, \Hom_{R_{\infty}}(S_{\infty},-))\approx \Ext^n
_{R_{\infty}}(M,-).
$$
Again by the fact that $J \cdot S_{\infty}\subseteq N_{\infty}$, for $S_{\infty}$-modules, we obtain
$$
\begin{array}{ll}
\Ext^n_{R_{\infty}}(M,-) &\approx \Ext^n_{S_{\infty}}(M, \Hom_{R_{\infty}}(S_{\infty},-))\\
&\approx \Ext^n_{S_{\infty}}(M, \Hom_{R_{\infty}}(N_{\infty},-))\\
&\simeq \Ext^n_{S_{\infty}}(M, \Hom_{R_{\infty}}(R_{\infty}^{\oplus \ell},-))\\
&\simeq \Ext^n_{S_{\infty}} (M, -)^{\oplus \ell}.
\end{array}
$$
By Proposition \ref{perfect}, $\pd_{R_{\infty}}(M)\leq\dim R+1$ and thus, $\Ext^n_{S_{\infty}} (M, -)^{\oplus\ell}\approx 0$ for all $n > \dim R+1$. In particular, $\Ext^n_{S_{\infty}} (M, -)\approx 0$ for all $n > \dim R+1$, as claimed.
\end{proof}

\begin{corollary}
Let $S$ be a reduced ring. Assume that $S$ is either
\begin{enumerate}

\item[$\mathrm{(i)}$]
a complete local ring of characteristic $p>0$, or

\item[$\mathrm{(ii)}$]
an affine algebra over a field of characteristic $p>0$.
\end{enumerate}
Then $S_{\infty}$ is almost regular.
\end{corollary}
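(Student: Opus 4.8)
The plan is to reduce the statement to Theorem~\ref{exam1} by splitting $S$ along its minimal primes. In either case $S$ is reduced and Noetherian, so it has finitely many minimal primes $\fp_1,\dots,\fp_n$, and $S\hookrightarrow\overline S:=\prod_{i=1}^n S/\fp_i$ is injective with $\overline S$ module-finite over $S$. I would first note that $\overline S/S$ vanishes at every minimal prime of $S$, so the conductor $\fc=(S:_S\overline S)$ lies in no associated prime of $S$ and, by prime avoidance, contains a non-zero divisor $c$; thus $c\overline S\subseteq S$. Each $S/\fp_i$ is a domain --- complete local in case (i) by Cohen's structure theorem \cite[Theorem A.22]{BH}, affine over the ground field $k$ in case (ii) --- and Noether normalization presents it as a torsion-free, module-finite, reduced algebra over a regular ring $A_i$ of finite Krull dimension (a power series ring, resp.\ a polynomial ring). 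Such an $A_i$ is $F$-coherent: the Frobenius is flat by Kunz \cite[Corollary 8.2.8]{BH}, so $(A_i)_\infty$ is a filtered colimit of coherent rings along flat maps and hence coherent by Lemma~\ref{homologycoh}(i). Theorem~\ref{exam1} then gives that each $(S/\fp_i)_\infty$ is almost regular, with the uniform bound $k:=\dim S+1$ (the bound there is $\dim A_i+1=\dim S/\fp_i+1$). Set $T:=\overline S_\infty=\prod_{i=1}^n (S/\fp_i)_\infty$.

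Next I would set up the transfer of almost regularity from $T$ down to $S_\infty$. Extracting $p^m$-th roots in $c\overline S\subseteq S$ yields $c^{p^{-m}}T\subseteq S_\infty$ for all $m$, so $J:=\bigcup_{m>0}c^{p^{-m}}S_\infty$ is an ideal of $S_\infty$ with $J\cdot T\subseteq S_\infty$; in particular $J$ annihilates $T/S_\infty$. I would equip $S_\infty$ with the value map $v$ pulled back along $S_\infty\to (S/\fp_1)_\infty$ from a valuation $v_1$ of a top-dimensional component $S/\fp_1$; since the non-zero divisor $c$ avoids $\fp_1$, we have $v(c)<\infty$, so $J$ contains elements of arbitrarily small valuation and every $S_\infty$-module annihilated by $J$ is almost zero. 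Because $T$ is the finite product $\prod_i(S/\fp_i)_\infty$ with idempotents $e_1,\dots,e_n$ and each factor is almost regular with bound $k$, for any $T$-modules $X,Y$ one has $\Ext^p_T(X,Y)=\prod_i\Ext^p_{(S/\fp_i)_\infty}(e_iX,e_iY)$, each factor almost zero for $p>k$; importing a small-$v_1$-valuation annihilator of the first component (and any annihilators of the rest) into $S_\infty$ via $J\cdot T\subseteq S_\infty$ shows that $\Ext^p_T(X,Y)$ is almost zero with respect to $v$ for all $p>k$.

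With this in hand, given arbitrary $S_\infty$-modules $M,N$ I would put $M':=M\otimes_{S_\infty}T$ and $N':=N\otimes_{S_\infty}T$. The kernels and cokernels of $M\to M'$ and $N\to N'$ are subquotients of $M\otimes_{S_\infty}(T/S_\infty)$, $\Tor_1^{S_\infty}(M,T/S_\infty)$ and their analogues for $N$, hence killed by $J$; tracking this genuine annihilation through the relevant long exact sequences (rather than appealing to any principle that $\Ext$ of an almost zero module is almost zero) gives $\Ext^n_{S_\infty}(M,N)\approx\Ext^n_{S_\infty}(M',N')$ for all $n$. Then the change-of-rings spectral sequence \cite[Theorem 11.66]{Rot} for $S_\infty\to T$ and the $T$-module $M'$,
$$
E^2_{p,q}=\Ext^p_T\big(M',\Ext^q_{S_\infty}(T,N')\big)\underset{p}\Rightarrow\Ext^{p+q}_{S_\infty}(M',N'),
$$
has $E^2_{p,q}$ annihilated by $J$ for every $q>0$ --- because $\Ext^q_{S_\infty}(T,N')$ is a quotient of $\Ext^q_{S_\infty}(T/S_\infty,N')$ --- hence almost zero there, so Lemma~\ref{spec} yields $\Ext^n_{S_\infty}(M',N')\approx\Ext^n_T\big(M',\Hom_{S_\infty}(T,N')\big)$, which is almost zero for $n>k$ by the second paragraph. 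Thus $\Ext^n_{S_\infty}(M,N)\approx 0$ for all $n>k$, i.e.\ $S_\infty$ is almost regular, proving both (i) and (ii).

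The routine inputs --- Cohen's theorem, Noether normalization, $F$-coherence of regular rings, a non-zero divisor in the conductor --- cause no difficulty; the real work, and the main obstacle, is the transfer step. Two things there must be handled carefully: everything should be phrased via genuine annihilation by the ideal $J$, since for a value map one cannot assert in general that $\Ext$ with an almost zero argument is almost zero; and, since $S$ need not be a domain, one must be explicit about the value map on $S_\infty$ --- pulling a valuation back from a top-dimensional component $S/\fp_1$ is precisely what keeps $J$ acting like the unit ideal and so keeps $J$-annihilated modules almost zero, which is all the argument uses.
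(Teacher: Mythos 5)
Your argument is correct (at the level of rigor this paper operates at), but it takes a genuinely different and much longer route than the paper. The paper's own proof is two lines: by Cohen's structure theorem (resp.\ Noether normalization) $S$ is module-finite over a regular local ring (resp.\ polynomial ring) $R$, regular rings are $F$-coherent, and Theorem~\ref{exam1} is applied directly to $R \to S$. You instead split $S$ along its minimal primes, apply Theorem~\ref{exam1} to each component $S/\fp_i$ over its own regular normalization, and then transfer almost regularity from $T=\prod_i (S/\fp_i)_\infty$ back to $S_\infty$ via a non-zero divisor in the conductor, genuine annihilation by the ideal $J=\bigcup_m c^{p^{-m}}S_\infty$, and the change-of-rings spectral sequence --- essentially re-running the mechanism of the proof of Theorem~\ref{exam1} once more for the inclusion $S_\infty \subseteq T$. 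What your detour buys is real: Theorem~\ref{exam1} requires $S$ to be \emph{torsion-free} over $R$, which for a reduced $S$ module-finite over a Noether/Cohen normalization amounts to equidimensionality, so the paper's direct citation is strictly speaking incomplete for non-equidimensional $S$ (e.g.\ $k[[x,y,z]]/(xy,xz)$); your decomposition-plus-conductor argument covers that case, and you are also right to insist on honest $J$-annihilation rather than ``Ext of almost zero is almost zero,'' and to make the value map on $S_\infty$ explicit (pulling back from one component is in fact essentially forced, since the infinite locus of any value map on $S_\infty$ contains a minimal prime). What the paper's route buys is brevity: when $S$ is equidimensional (in particular a domain), the torsion-freeness hypothesis holds and none of your transfer machinery is needed. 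Two small caveats in your write-up: the pulled-back $v$ only ``sees'' the component $S/\fp_1$, so modules supported on the other components are automatically almost zero --- a legitimate but weak instance of the definition (the paper is equally silent about the choice of value map); and your chain of comparisons uses several almost isomorphisms in succession, which is fine here because each is an actual map with $J$-torsion kernel and cokernel, so almost-zero-ness passes along the chain.
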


\begin{proof}
(i): By Cohen's structure theorem, there is a regular local ring $(R,\fm)$ such that $R \to S$ is module-finite. By \cite[Proposition 3.2 (i)]{S}, regular rings are $F$-coherent. Theorem \ref{exam1} completes the argument.

(ii): By Noether's normalization \cite[Theorem A.14]{BH}, there exists a polynomial ring $R$ over a field such that $R \to S$ is module-finite. Hence $S_{\infty}$ is almost regular.
\end{proof}

\begin{corollary}\label{cor1}
Let $R$ be a  Noetherian complete local domain of characteristic $p>0$. Then $\Tor_i^{R^+}(M,N)$ is almost zero for all $i>\dim R$ and all $R^+$-modules $M$ and $N$.
\end{corollary}

\begin{proof}
First note that $R^{+}=\bigcup_{\gamma\in \Gamma} R_\gamma$, where $R_\gamma$ is a module-finite extension of $R$. So $R_\gamma$ is a complete local domain. By Cohen's structure theorem, there is a regular local ring $(A,\fm)$ such that $R_{\gamma}$ is module-finite over $A$. For each $(R_{\gamma})_{\infty}$-modules $M$ and $N$, look at the following spectral sequence:
$$
\Tor_p^{(R_\gamma)_{\infty}}(M,\Tor_q^{A_{\infty}}((R_{\gamma})_{\infty},N))\underset{p}\Rightarrow \Tor_{n}^{A_{\infty}}(M,N).
$$
Then as in the proof of Corollary~\ref{flat}, together with almost flatness of $(R_{\gamma})_{\infty}$ over $A_{\infty}$,
$$
\Tor_{n}^{A_{\infty}}(M,N)\approx 0\Longleftrightarrow \Tor_{n}^{(R_{\gamma})_{\infty}}(M,N) \approx 0 \ \ (\ast).
$$
Keep in mind that $A$ is $F$-coherent, since it is regular. Then by Proposition \ref{perfect},
$$
\Wdim(A_{\infty}) \le \dim A=\dim R.
$$
Thus, $\Tor_{i}^{A_{\infty}}(M,N)=0$ for all $i>\dim R$. In view of $(\ast)$, we see that
$$
\Tor_i^{(R_{\gamma})_{\infty}}(M,N)\approx 0
$$
for all $i>\dim R$ and all $\gamma$. Note that $R^{+}={\varinjlim}R_{\gamma}\subseteq {\varinjlim}(R_{\gamma})_{\infty}\subseteq R^{+}$ and so $R^{+}={\varinjlim}(R_{\gamma})_{\infty}$. Let $M$ and $N$ be two $R^{+}$-modules. Keep in mind that almost zero modules are closed under taking direct limits. To conclude, it remains to recall from \cite[VI, Exercise 17]{CE} that $$\Tor_i^{R^{+}}(M,N)\simeq{\varinjlim}\Tor_i^{(R_{\gamma})_{\infty}}(M,N),$$
which is almost zero for all $i>\dim R$, as desired.
\end{proof}

\begin{remark}
$\mathrm{(i)}$: Recall that a ring $A$ is \textit{regular}, if every finitely generated ideal of $A$ has finite projective dimension. Suppose that $A$ is coherent and almost regular with respect to a normalized value map. Then $A$ is regular. Indeed, let $\fa \subseteq A$ be a finitely generated ideal. Then there exists an integer $k$ such that $\Ext^n_A (\fa,-)\approx0$ for all $n > k$. Let $M$ be an $A$-module. Then $M$ is the direct limit of finitely presented modules $\{M_\gamma ~|~\gamma\in \Gamma\}$. Keep in mind that a finitely presented module over a coherent ring is coherent. So $\fa$ and $M_\gamma$ are coherent $A$-modules and $\Ext^n_A(\fa,M_\gamma)$ is finitely presented. Note that $\Ext^n_A (\fa,M_\gamma)\approx0$ for all $n > k$. Example \ref{coh} (i) shows that $\Ext^n_A(\fa,M_\gamma)=0$ and
$$
\Ext^n_A(\fa,{\varinjlim}_{\gamma\in \Gamma} M_\gamma)\simeq {\varinjlim}_{\gamma\in \Gamma}\Ext^n_A(\fa,M_\gamma)=0
$$
for all $n > n_0$ in view of Lemma \ref{homologycoh} (vii), which is the claim.

$\mathrm{(ii)}$: Let $A$ be an algebra equipped with a value map. One might ask whether an analogue of Lemma \ref{toralm1} holds for Ext-modules, or not. Here we present the following examples showing that this is not the case in general:\\
$\mathrm{(a)}$
Assume that $L$ is an almost zero $A$-module and $K$ is a finitely presented $A$-module. Then we claim that $\Ext_{A}^i(K,L) \approx 0$ for all $i \ge 0$. Indeed, $L$ is the direct limit of its finitely generated submodules $\{L_\gamma~|~\gamma\in \Gamma\}$. Now recall from \cite[Theorem 2.1.5 (3)]{G} that
$$
\Ext_{A}^i(K,L)\simeq \underset{\gamma\in \Gamma}{\varinjlim}\Ext_{A}^i(K,L_{\gamma}).
$$
Since almost zero modules are closed under taking both direct limit and submodules, we may assume that $L$ is almost zero and finitely generated. Thus, we have $\Ext_{A}^i(K,L) \approx 0$ for all $i \ge 0$ by taking a projective resolution of $K$.\\
$\mathrm{(b)}$
Assume that $L$ is an almost zero $A$-module. Then $\Ext^i_{A}(K, L)$ is not necessarily almost zero. To see an example, let $A:=k[X_0,X_1,X_2,\ldots]=\bigcup_{i=0}^{\infty}k[X_0,\ldots,X_i]$ be a polynomial algebra over a field $k$. Define a (non-normalized) valuation $v$ on $A$ as follows. Set $v(X_t):=t^{-1}$ for $t>0$. For a polynomial $f \in A$, let $v(f)$ be such that $v(f)$ equals the minimum of all $v(\underline{X}^{\mu})$ as $\underline{X}^{\mu}$ varies over all the monomials appearing in $f$ with non-zero coefficients. Let $K:=\bigoplus_{i=1}^{\infty} A$ and $L:=\bigoplus_{i=1}^{\infty}A/ \fm^i$ with $\fm:=(X_1,X_2,\ldots)$. Then $L \approx 0$. Now assume that
$$
\Hom_A(K, L)\simeq \prod_{k=1}^{\infty} \Hom_A(A, L) \simeq \prod_{k=1}^{\infty} L
$$
is almost zero. Define an injective map $\prod_{i=1}^{\infty} A/ \fm^i \hookrightarrow \prod_{k=1}^{\infty} L \approx 0$ as follows. An element of $A/\fm^i$ in $\prod_{i=1}^{\infty} A/ \fm^i$ is sent to an element in the component $A/\fm^i$ of $L$, where $L$ is the $i$-th component of $\prod_{k=1}^{\infty}L$. It is easy to see that $A$ is $\fm$-adically separated; $\bigcap_{n>0} \fm^n=0$. Hence $A$ injects into $\prod A/\fm^i \approx 0$ and $A$ is almost zero. But this is false.
\end{remark}

\begin{acknowledgement}
We thank the anonymous referee for his/her detailed review.
\end{acknowledgement}


\begin{thebibliography}{99}

\bibitem{AH}
I. M. Aberbach and M. Hochster, \emph{Finite tor dimension and failure of coherence in absolute integral closures}, J.  Pure  Appl. Algebra,  {\bf{122}} (1997), 171--184.

\bibitem{A}
M. Asgharzadeh, \emph{Homological properties of the perfect and absolute integral closure of Noetherian domains}, Math. Ann.,  {\bf{348}} (2010), 237--263.

\bibitem{AT}
M. Asgharzadeh and M. Tousi, \emph{On the notion of Cohen-Macaulayness for non-Noetherian rings}, J. Algebra, {\bf {322}} (2009), 2297--2320.

\bibitem{AT2}
M. Asgharzadeh and M. Tousi, \emph{Grade of ideals with respect to torsion theories}, preprint, {\bf{arXiv math.AC/1004.1359}}.

\bibitem{BH}
W. Bruns and J. Herzog,  \emph{Cohen-Macaulay rings}, Cambridge University Press, {\bf{39}}, Cambridge, (1998).

\bibitem{CE}
H. Cartan and S. Eilenberg, \emph{ Homological algebra}, Princeton University Press,  1956.

\bibitem{Die}
G. D. Dietz, \emph{Big Cohen-Macaulay algebras and seeds}, Trans. Amer. Math. Soc., {\bf{359}} (2007), 5959--5989.

\bibitem{F}
G. Faltings, \emph{Almost \'etale extensions: Cohomologies p-adiques et applications arithm\'etiques II}, Ast\'erisque, {\bf{279}} (2002), 185--270.

\bibitem{GR}
O. Gabber and L. Ramero, \emph{Almost ring theory}, Springer-Verlag, LMN, {\bf{1800}} (2003).

\bibitem{G}
S. Glaz, \emph{Commutative coherent rings}, Springer-Verlag, LNM, {\bf{1371}} (1989).

\bibitem{HM}
T. D. Hamilton and T. Marley, \emph{Non-Noetherian Cohen-Macaulay rings}, J. Algebra, {\bf{307}} (2007), 343--360.

\bibitem{He1}
R. Heitmann, \emph{The direct summand conjecture in dimension three}, Ann. of Math., {\bf{156}} (2002), 695--712.

\bibitem{Ho02}
M. Hochster,  \emph{Big Cohen-Macaulay algebras in dimension three via Heitmann's theorem}, J. Algebra, {\bf{254}} (2002), 395--408.

\bibitem{HH}
M. Hochster and C. Huneke, \emph{Infinite integral extensions and big Cohen-Macaulay algebras}, Ann. of Math., {\bf{135}} (1992), 53--89.

\bibitem{M}
 P. Morandi, \emph{Field and Galois theory}, GTM {\bf{167}}, Springer-Verlag, New York, (1996).

\bibitem{N}
D. G.  Northcott, \emph{Finite free resolutions}, Cambridge Tracts Math., {\bf71}, (1976).

\bibitem{R1}
P. Roberts, \emph{Almost regular sequences and the monomial conjecture}, Michigan Math. J., {\bf{57}}, (2008)
615--623.

\bibitem{R}
P. Roberts, \emph{Fontaine rings and local cohomology}, J. Algebra, {\bf{323}} (2010), 2257--2269.

\bibitem{RSS}
P. Roberts, A. Singh, and V. Srinivas, \emph{Annihilators of local cohomology in characteristic zero}, Illinois J. Math., {\bf{51}} (2007),  237--254.

\bibitem{Rot}
J. Rotman, \emph{An introduction to homological algebra},  Pure and Applied Mathematics  {\bf{85}}, Academic Press, New York-London, (1979).

\bibitem{Sch}
H. Schoutens, \emph{A generalization of the Auslander-Buchsbaum formula},  preprint.

\bibitem{S}
K. Shimomoto, \emph{F-coherent rings with applications to tight closure theory},   J.  Algebra, {\bf{338}},  (2011),  24--34.

\bibitem{S1}
K. Shimomoto,  \emph{Almost Cohen-Macaulay algebras in mixed characteristic via Fontaine rings}, to appear in Illinois J. Math.




\end{thebibliography}
\end{document}